\newcommand{\evalat}[2]{\left. #1 \right\rvert_{#2}}
\theoremstyle{plain}
\newtheorem{theorem}{Theorem}[section]
\newtheorem{lemma}[theorem]{Lemma}
\newtheorem{proposition}[theorem]{Proposition}
\theoremstyle{definition}
\newtheorem{definition}[theorem]{Definition}
\theoremstyle{remark}
\begin{document}


\title{On qualitative analysis of a discrete time SIR epidemical model}

\author{
\name{J. Hallberg Szabadv\'ary \textsuperscript{a}\thanks{Email: johanh@math.su.se} and Y. Zhou\textsuperscript{b}\thanks{CONTACT J. Hallberg Szabadv\'ary. Email: johanh@math.su.se}}
\affil{\textsuperscript{a} \textsuperscript{b}Department of Mathematics, Stockholm University, Sweden}
}

\maketitle

\begin{abstract}
The main purpose of this paper is to study the local dynamics and bifurcations of a discrete-time SIR epidemiological model. 
The existence and stability of disease-free and endemic fixed points are investigated along with a fairly complete classification of the systems bifurcations, in particular, a complete analysis on local stability and codimension 1 bifurcations in the parameter space. Sufficient conditions for positive trajectories are given.
The existence of a 3-cycle is shown, which implies the existence of cycles of arbitrary length by the celebrated Sharkovskii's theorem.
Generacity of some bifurcations is examined both analytically and through numerical computations. Bifurcation diagrams along with numerical simulations are presented. The system turns out to have both rich and interesting dynamics.

\end{abstract}

\begin{keywords}
Discrete time SIR epidemic model; stability; fixed points; \(n\)-cycles; limit cycles;  flip bifurcation;  Neimark-Sacker bifurcation, resonances; chaos; Lyapunov exponent; first Lyapunov coefficient.
\end{keywords}

\section{Introduction}

In the recent two decades, there are many research papers dealing with discrete epidemic models in order to investigate the transmission dynamics of infectious diseases.  See e.g.,  \cite{badone,CCY,IzV1,IMV,JE,LMB,Sek,ZMB}.    It is believed  that they  are more appropriate approaches to understand disease transmission dynamics and to evaluate eradication policies because they permit arbitrary time-step units, preserving the basic features of corresponding continuous-time models. Furthermore, this allows better use of statistical data for numerical simulations due to the reason that the infection data are compiled at discrete given time intervals.  
In this paper we consider a discrete-time version of the SIR model in which the growth of the susceptible population, some inhibitory effects and death rates have been accounted for.  More precisely we consider the following system 
\begin{equation}
\begin{aligned}
\widetilde{S}_{n+1} &= \rho \widetilde{S}_n(1-\frac{\widetilde{S}_n}{c}) - \frac{\widetilde{\beta} \widetilde{S}_n\widetilde{I}_n}{1+\widetilde{a}\widetilde{S}_n}\\
\widetilde{I}_{n+1} &=  \frac{\widetilde{\beta} \widetilde{S}_n\widetilde{I}_n}{1+\widetilde{a}\widetilde{S}_n}+(1-\mu-\gamma)\widetilde{I}_n \\ 
\widetilde{R}_{n+1} &= \gamma \widetilde{I}_n + (1-\lambda)\widetilde{R}_n
\end{aligned}
\label{unscaled full syst}
\end{equation}
where $\widetilde{\beta} \widetilde{S}_n\widetilde{I}_n/(1+\widetilde{a}\widetilde{S}_n)$ is the force of infection, $\tilde{a}$ measures the inhibitory effect, for exampe due to public health measures imposed on the group of susceptibles, $\rho$ is the per capita growth rate for the susceptibles; individuals are born susceptible and there is no inhereted imunity. We assume $a\neq0$. Further parameters are $\gamma$, the recovery rate of the infected individuals, $\mu$ and $\lambda$ that are death rates of infected and removed respectively. Hence clearly $\mu,\gamma,\lambda<1$, and in fact since the fraction of infected that are removed due to death or recovery in each time step cannot exceed 1 we define $K=\mu+\gamma$ and assume $0<K<1$. The growth of the susceptible population is thus assumed to be \textit{logistic} which essentially means that the population grows rapidly when it is small, and more slowly as it approaches some carrying capacity, which in our case is $c$. It is important to note that this term means that the total population is not constant.

Note that $\widetilde{R}_n$ does not appear in the other two equations. It can thus be ignored on analysis of the system since it will not affect the system dynamics. Hence our main concern is the reduced model
\begin{equation}
\begin{aligned}
\widetilde{S}_{n+1} &= \rho \widetilde{S}_n(1-\frac{\widetilde{S}_n}{c}) - \frac{\widetilde{\beta} \widetilde{S}_n\widetilde{I}_n}{1+\widetilde{a}\widetilde{S}_n}\\
\widetilde{I}_{n+1} &=  \frac{\widetilde{\beta} \widetilde{S}_n\widetilde{I}_n}{1+\widetilde{a}\widetilde{S}_n}+(1-K)\widetilde{I}_n. \\ 
\end{aligned}
\label{unscaled full syst}
\end{equation}
To simplify our analysis we scale the variables  $\widetilde{S}$ and $\widetilde{I}$ by $\widetilde{S}=\alpha_1S, \widetilde{I}=\alpha_2I$ where $\alpha_1\neq0,\alpha_2\neq0$ are scaling constants to be determined. Then we have
\begin{equation}\label{scaling}
\begin{aligned}
	\alpha_1S_{n+1} &= \alpha_1S_n+\rho \alpha_1S_n(1-\frac{\alpha_1S_n}{c}) - \frac{\widetilde{\beta} \alpha_1\alpha_2S_nI_n}{1+\widetilde{a}\alpha_1S_n}\\
	\alpha_2I_{n+1} &= (1-K)\alpha_2I_n + \frac{\widetilde{\beta} \alpha_1\alpha_2S_nI_n}{1+\widetilde{a}\alpha_1S_n},
\end{aligned}
\end{equation}
equivalently,
\begin{equation}\label{scaling simplified}
\begin{aligned}
	S_{n+1} &= (1+\rho)S_n(1-\frac{\alpha_1\rho S_n}{c(1+\rho)}) - \frac{\widetilde{\beta} \alpha_2S_nI_n}{1+\widetilde{a}\alpha_1S_n}\\
	I_{n+1} &= (1-K)I_n + \frac{\widetilde{\beta} \alpha_1S_nI_n}{1+\widetilde{a}\alpha_1S_n}.
\end{aligned}
\end{equation}
Choosing $\frac{\alpha_1\rho}{c(1+\rho)}=1$ and $\widetilde{\beta}\alpha_1=\widetilde{\beta}\alpha_2$ yields $\alpha_1=\alpha_2 = \frac{c(1+\rho)}{\rho}$. Let $\beta=\alpha_1\widetilde{\beta}, a=\alpha_1\widetilde{a}$ and $r=1+\rho$ we get our equivalent system
\begin{equation}\label{syst}
\begin{aligned}
	S_{n+1} &= rS_n(1-S_n) - \frac{\beta S_nI_n}{1+aS_n}\\
	I_{n+1} &= (1-K)I_n + \frac{\beta S_nI_n}{1+aS_n}.
\end{aligned}
\end{equation}
where clearly $\beta>0, \alpha\geq0$ and $r>1$. The sytem \eqref{syst}, is the same as in the paper \cite{badone}, where the authors present some analysis and numerical simulations, indicating local stability of fixed points and bifurcation to periodic doubling but the analysis is short of rigorous, and far from complete.  This leads to an example which should indicate a limit cycle but in fact it is a case of a stable fixed point. 
Our aim in this paper lies on mathematical analysis of local stability of fixed points and other dynamical behaviors such as periodic doubling, limit cycles and their stability, and other bifurcations. The aim is to provide dynamical insight for modelers who wish to apply such models. We mention the following two arguments. First it is interesting from dynamical systems point of view, because this is a rational map, just a little more complicated than polynomial maps which often appears in population models that include competetive enviroments as discussed in \cite{modelsBiology}. Second, this system can be viewed as a discretization of a continuous model such as described in \cite{SIRLogist}.  Our analysis provides a systematic way for choices of step size, for instance using Euler's method, to avoid undesired dynamical behavior in computation because it is well-known that a discrete system exhibits  dynamical behaviours not existing in the original continuous system.

The rest of the paper is organised as follows:  We present mathematical theory which is used in our analysis, and study positive trajectories of the system \eqref{syst} in Section~2.  In Section~3 we show that there are at most two fixed points and study their local stability.      In Sections~4 and 5 we give a complete analysis on flip and Neimark-Sacker's bifurcation respectively.   We present numerical simulations in Section~6, and provide bifurcation diagrams for some typical settings of parameters as well as discussions on period \(3\). Other bifurcations  and possible chaotic behavior is supported by the computation of Lyapunov exponents.   We conclude the paper by a discussion on epidemiological relevance and  possible further investigations in Section~7. The lengthy computations are collected in the Appendix.

\section{Preliminaries}

In this section we first collect theory for analysis of dynamical system used in this study, for details we refer to \cite{kuznetsov}.  Then we show some properties of the mapping used in the model, followed by a discussion of forward positivity.

\subsection{Dynamical system preliminaries}

For simplicity we say a fixed point of the dynamical system is stable if it is asymptotically stable.   The following  local stability theorem plays the central role in stability analysis.  

\begin{theorem}\label{sufficient conditions}
Consider a discrete-time dynamical system
$$
x\mapsto f(x),~x\in\mathbb{R} ^n,
$$
where f is smooth. Suppose it has a fixed point $x^*$, so that $f(x^*)=x^*$, and denote by $A$ the Jacobian matrix of $f(x)$ evaluated at $x^*$. Then the fixed point is locally asymptotically stable if all eigenvalues $\mu_1,\mu_2,\dots,\mu_n$ of $A$ satisfy $|\mu|<1$.
\end{theorem}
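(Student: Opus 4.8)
The statement is the standard discrete-time Lyapunov stability theorem (the "principle of linearized stability"), so the plan is to reduce the nonlinear map near $x^*$ to its linear part plus a controlled remainder and then use the spectral radius hypothesis to build a Lyapunov function. First I would translate the fixed point to the origin by setting $y = x - x^*$, so that the dynamics become $y \mapsto A y + g(y)$ with $g$ smooth and $g(y) = o(\|y\|)$ as $y \to 0$ (indeed $g(y) = O(\|y\|^2)$ since $f$ is smooth). The whole problem is then to show $0$ is asymptotically stable for this map given that every eigenvalue of $A$ has modulus $< 1$.

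The key analytic step is to choose a norm on $\mathbb{R}^n$ adapted to $A$. Since $\rho(A) = \max_i |\mu_i| < 1$, pick $q$ with $\rho(A) < q < 1$. A standard lemma (e.g.\ via the Jordan form, rescaling the off-diagonal $1$'s to be small) produces a vector norm $\|\cdot\|_*$, with induced operator norm, such that $\|A\|_* \le q$. Then on this norm I would estimate, for $\|y\|_*$ small,
\begin{equation*}
\|Ay + g(y)\|_* \le \|A\|_* \|y\|_* + \|g(y)\|_* \le q\|y\|_* + \varepsilon \|y\|_*,
\end{equation*}
where $\varepsilon$ can be made as small as we like by shrinking the neighborhood, using $g(y) = o(\|y\|_*)$. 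Choosing $\varepsilon$ with $q + \varepsilon =: \theta < 1$, there is a ball $B = \{\|y\|_* < \delta\}$ on which the map is a contraction toward $0$ in the $\|\cdot\|_*$ sense: $\|y_{n+1}\|_* \le \theta \|y_n\|_*$ whenever $y_n \in B$. Hence $B$ is forward-invariant, iterates stay in $B$, and $\|y_n\|_* \le \theta^n \|y_0\|_* \to 0$. Translating back, $x_n \to x^*$ for all initial data in a neighborhood of $x^*$, and the contraction estimate also gives Lyapunov stability (the iterates never leave $B$), so $x^*$ is asymptotically stable.

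The main obstacle — and the only nontrivial point — is the construction of the adapted norm with $\|A\|_* \le q$; the spectral radius does not in general coincide with any "obvious" operator norm (a matrix can have $\rho(A) < 1$ yet $\|A\| > 1$ in Euclidean norm), so one genuinely needs the Jordan-form rescaling argument (or, equivalently, Gelfand's formula $\rho(A) = \lim_n \|A^n\|^{1/n}$ to get $\|A^N\| < 1$ for some $N$ and then pass to an averaged norm). Once that norm is in hand, the remainder estimate and the contraction-mapping conclusion are routine. Since the paper cites \cite{kuznetsov} for the dynamical-systems background, I expect the authors to simply invoke this classical result rather than reproduce the Jordan-form construction, so the "proof" may well be a one-line reference; the plan above is what a self-contained argument would look like.
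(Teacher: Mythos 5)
Your proof is correct and is the standard argument (adapted norm via Jordan-form rescaling, then a local contraction estimate); the paper itself gives no proof at all, simply stating the theorem and deferring to the cited reference [Kuznetsov], exactly as you anticipated. Nothing further is needed.
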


For our analysis the following proposition is useful. 
\begin{proposition}
 Consider a $2\times2$-matrix $A=(a_{ij})$.  Then its characteristic polynomial 
 \[
p(\mu) =
\mu^2-\text{\em trace}(A)\mu + \det(A),
\]
has all zeros inside the unit circle if and only if 
\begin{equation}\label{stability conditions trace det}
\begin{aligned}
|\text{\em trace}(A)|&<1+\det(A)\\
|\det(A)|&<1.
\end{aligned}
\end{equation}
\end{proposition}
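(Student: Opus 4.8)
The plan is to characterize when both roots of $p(\mu)=\mu^2-T\mu+D$ (writing $T=\operatorname{trace}(A)$, $D=\det(A)$) lie strictly inside the unit disk, using the classical Jury/Schur-Cohn test for quadratics. First I would recall that for a real quadratic the roots come in one of two shapes: either a complex-conjugate pair $\mu_{1,2}=\sigma\pm i\tau$, in which case $|\mu_1|^2=|\mu_2|^2=\mu_1\mu_2=D$, so both lie inside the unit circle exactly when $D<1$ (and automatically $D\ge 0$ here); or a real pair, in which case one argues via the location of $p$ relative to the points $\mu=1$ and $\mu=-1$. The key observation in the real-root case is that a real quadratic with positive leading coefficient has both roots in $(-1,1)$ if and only if $p(1)>0$, $p(-1)>0$, and the vertex abscissa $T/2$ lies in $(-1,1)$; the last condition together with $D<1$ can be repackaged.

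The cleaner route, which I would actually write, is to prove the equivalence directly with the three inequalities $p(1)>0$, $p(-1)>0$, $D<1$, and then show these three are equivalent to the stated pair \eqref{stability conditions trace det}. Concretely, $p(1)=1-T+D>0$ and $p(-1)=1+T+D>0$ together say exactly $|T|<1+D$, which is the first line of \eqref{stability conditions trace det}. For the second line, I would note that $D<1$ plus $p(1)>0,p(-1)>0$ forces $D>-1$ as well: indeed adding $p(1)>0$ and $p(-1)>0$ gives $2+2D>0$, i.e. $D>-1$, so $|D|<1$. Conversely, \eqref{stability conditions trace det} gives $1+D>0$ and $|T|<1+D$, hence $p(\pm1)>0$, and $|D|<1$ gives $D<1$; so the two inequality systems are equivalent. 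It then remains to prove the core claim: a monic real quadratic has both roots (real or complex) strictly inside the unit disk iff $p(1)>0$, $p(-1)>0$, $|D|<1$.

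For that core claim I would split into the two cases above. In the complex case $D<1$ is equivalent to both roots being inside, and one checks $p(1)=(1-\sigma)^2+\tau^2>0$, $p(-1)=(1+\sigma)^2+\tau^2>0$ hold automatically, while $D<1$ and $|D|<1$ coincide since $D=\sigma^2+\tau^2\ge0$; so the three-inequality system is equivalent to "inside." In the real case, write the roots $\mu_1\le\mu_2$; "inside" means $-1<\mu_1$ and $\mu_2<1$. Since $p(x)=(x-\mu_1)(x-\mu_2)$ opens upward, $p(1)>0$ is equivalent to $1$ lying outside $[\mu_1,\mu_2]$, i.e. $1<\mu_1$ or $1>\mu_2$; similarly $p(-1)>0$ says $-1<\mu_1$ or $-1>\mu_2$. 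The condition $|D|<1$, i.e. $|\mu_1\mu_2|<1$, rules out the spurious branches (both roots $>1$, or both $<-1$, or a large root in each direction), leaving exactly $-1<\mu_1\le\mu_2<1$. I would also record the degenerate sub-case where $p$ has a real double root, which the same argument covers. The main obstacle, such as it is, is purely bookkeeping: carefully eliminating the "wrong" sign combinations in the real-root case so that $p(1)>0\wedge p(-1)>0\wedge|D|<1$ pins down precisely the branch with both roots in $(-1,1)$; once that case analysis is organized, the translation to \eqref{stability conditions trace det} via $p(1),p(-1)$ is immediate.
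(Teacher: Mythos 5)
Your proof is correct. The paper actually states this proposition without proof, treating it as the classical Jury (Schur--Cohn) stability criterion for a real quadratic, so there is nothing in the text to compare against; your reduction to the three conditions $p(1)>0$, $p(-1)>0$, $|\det(A)|<1$ and the subsequent case split into complex-conjugate versus real roots is exactly the standard argument one would supply. One cosmetic remark: in the real-root case the configuration $\mu_1<-1<1<\mu_2$ is already excluded by $p(1)>0$ (the point $1$ would lie between the roots, making $p(1)<0$), so attributing its elimination to $|D|<1$ is slightly misplaced, though harmless since that condition excludes it as well; the remaining bookkeeping is sound.
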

Hence we have found that for a fixed point $x^*$ of a two-dimensional discrete-time smooth dynamical system, with Jacobian matrix $A$ evaluated at $x^*$, sufficient conditions for stability of $x^*$ are \eqref{stability conditions trace det}.

Now consider a system that depends on parameters, which we write as
\begin{equation}\label{generic syst. param.}
x\mapsto f(x,\alpha)
\end{equation}
were $x\in\mathbb{R} ^n$ and $\alpha\in\mathbb{R} ^m$. As the parameters vary, the phase portrait also varies, and there are two possibilities. Either the system remains topologically equivalent to the original one, or its topology changes. 

\begin{definition}
The appearance of a topologically non-equivalent phase portrait under variation of parameters is called a bifurcation.
\end{definition}

Thus, a bifurcation is a change of the topological type of the system as its parameters pass through a \textit{bifurcation (critical) value}.

\begin{definition}
The codimension of a bifurcation is the difference between the dimension of the parameter space and the dimension of the corresponding bifurcation boundary. Or equivalently, the codimension is the number of independent conditions determining the bifurcation.
\end{definition}

\begin{definition}\label{def:codim1}
The following three bifurcation types are possible in codimension one:
\begin{itemize}
\item The bifurcation associated with the appearance of $\mu_1=1$ is called a {\em fold bifurcation}. \\
\item The bifurcation associated with the appearance of $\mu_1=-1$ is called a {\em flip-} or {em period-doubling bifurcation}.\\
\item The bifurcation associated with the appearance of $\mu_{1,2}=e^{\pm i\theta_0}, 0<\theta_0<\pi$ is called a {\em Neimark-Sacker bifurcation}.
\end{itemize}
\end{definition}
Note that flip and fold bifurcation may appear in one-dimensional systems, while Neimark-Sacker requires at least dimension two. 

\begin{theorem}[Generic flip]\label{generic flip}
Suppose that a one-dimensional system
$$
x\mapsto f(x,\alpha),~x\in\mathbb{R} ,~\alpha\in\mathbb{R} ,
$$
with smooth map $f$, has at $\alpha = 0$ the fixed point $x^*=0$, and let $\mu = f_x(0,0)=-1$, where $f_x$ denotes derivative. Assume that the following nondegeneracy conditions are satisfied:
\begin{equation}\tag{B.1}\label{generic flip conditions 1}
\frac{1}{2}(f_{xx}(0,0))^2+\frac{1}{3}f_{xxx}(0,0) \neq 0
\end{equation}
\begin{equation}\tag{B.2}\label{generic flip conditions 2}
f_{x\alpha}(0,0) \neq 0. 
\end{equation}
Then there are smooth invertible coordinate and parameter changes transforming the system into
$$
\eta \mapsto -(1+\beta)\eta\pm\eta^3 + O(\eta^4).
$$
\end{theorem}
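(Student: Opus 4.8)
The plan is to carry out the standard reduction to the topological normal form of a flip in four stages: localize the fixed point, reparametrize by the multiplier, eliminate the quadratic term, and rescale the surviving (resonant) cubic term. \emph{Stage 1 (localization).} Since $f_x(0,0)=-1\neq 1$, the map $x-f(x,\alpha)$ has nonzero $x$-derivative at $(0,0)$, so by the implicit function theorem the fixed-point equation $f(x,\alpha)=x$ has a unique smooth branch $x=x^*(\alpha)$ near $\alpha=0$ with $x^*(0)=0$. The substitution $\xi=x-x^*(\alpha)$ produces a smooth one-parameter family $\xi\mapsto g(\xi,\alpha)$ with $g(0,\alpha)\equiv 0$ and $g_\xi(0,0)=-1$; write $\mu(\alpha)=g_\xi(0,\alpha)$ for the multiplier, which stays real, simple and close to $-1$ for small $\alpha$. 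Note that $g(\xi,0)=f(\xi,0)$, so the slice $\alpha=0$ is unchanged and (B.1) refers literally to it.

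\emph{Stage 2 (reparametrization).} The transversality hypothesis (B.2), read for the localized family (equivalently: the derivative $\mu'(0)$ of the multiplier along the fixed-point branch is nonzero), makes $\beta:=-1-\mu(\alpha)$ a smooth, locally invertible change of parameter with $\beta=0$ at $\alpha=0$. In the new parameter and coordinate the map reads
\[
\xi\mapsto -(1+\beta)\,\xi + b(\beta)\,\xi^2 + c(\beta)\,\xi^3 + O(\xi^4),
\]
with $b,c$ smooth and, by Taylor expansion, $b(0)=\tfrac12 f_{xx}(0,0)$, $c(0)=\tfrac16 f_{xxx}(0,0)$.

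\emph{Stage 3 (removing the quadratic term and identifying the cubic coefficient).} Put $\lambda=-(1+\beta)$; since $\lambda(\lambda-1)$ is near $2\neq 0$ for small $\beta$, the homological equation $\lambda(\lambda-1)\,\phi=b(\beta)$ has a unique smooth solution $\phi(\beta)$, and the near-identity change $\xi=\eta+\phi(\beta)\eta^2$ is a local diffeomorphism that kills the $\eta^2$ term, leaving
\[
\eta\mapsto -(1+\beta)\,\eta + c_1(\beta)\,\eta^3 + O(\eta^4).
\]
Tracking how the cubic coefficient transforms under a general change $\xi=\eta+\phi\eta^2+\psi\eta^3$ shows that at $\lambda=-1$ the contribution of $\psi$ cancels (this is precisely the resonance $\lambda^3=\lambda$ that forbids eliminating the cubic term), and one is left with
\[
c_1(0)=b(0)^2+c(0)=\tfrac14 f_{xx}(0,0)^2+\tfrac16 f_{xxx}(0,0)=\tfrac12\Big(\tfrac12 f_{xx}(0,0)^2+\tfrac13 f_{xxx}(0,0)\Big),
\]
which is nonzero exactly under (B.1).

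\emph{Stage 4 (rescaling) and the main difficulty.} Since $c_1(0)\neq 0$, $c_1(\beta)$ keeps a constant sign $s=\operatorname{sign}(c_1(0))$ for small $\beta$, and the smooth rescaling $\eta=|c_1(\beta)|^{-1/2}\zeta$ normalizes the cubic coefficient to $s=\pm 1$ while leaving the linear part $-(1+\beta)$ and the order of the remainder unchanged; composing all four changes gives the asserted form $\zeta\mapsto -(1+\beta)\zeta\pm\zeta^3+O(\zeta^4)$. I expect the substantive part to be bookkeeping rather than conceptual: checking that each of the four maps is a genuine local diffeomorphism, that the cubic term accumulates exactly into the combination in (B.1), and that the $O(\zeta^4)$ remainder is uniform for small $\beta$. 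The one genuinely delicate point is making precise the passage from (B.2) as stated on $f$ to the condition $\mu'(0)\neq 0$ used in Stage 2, since the fixed-point shift $x^*(\alpha)$ in general contributes an extra term $\tfrac12 f_{xx}(0,0)f_\alpha(0,0)$ to $\mu'(0)$; this is harmless once (B.2) is interpreted for the localized family, which is the convention implicitly in force.
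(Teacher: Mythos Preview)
Your proposal is correct and follows precisely the standard four-stage reduction (localize, reparametrize by the multiplier, kill the quadratic term via a near-identity change, rescale the resonant cubic) that Kuznetsov carries out in Chapter~4 of \cite{kuznetsov}; the paper itself does not give a proof but simply refers to that source, so there is nothing to compare beyond noting that your argument is exactly the one being cited. Your identification of the cubic coefficient $c_1(0)=\tfrac14 f_{xx}(0,0)^2+\tfrac16 f_{xxx}(0,0)$ matches the formula the paper quotes immediately after the theorem, and your flag about the passage from (B.2) as literally stated to $\mu'(0)\neq 0$ is well taken---Kuznetsov's convention is indeed that (B.2) is imposed on the localized family, which you correctly surmise.
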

The proof which is given in in Chapter 4 in \cite{kuznetsov} is not difficult but we do not give it here. The system 
\begin{equation}\label{topological normal form flip}
\eta \mapsto -(1+\beta)\eta\pm\eta^3 
\end{equation}
is called the \textit{topological normal form} for the flip bifurcation. The sign of the cubic term depends on the sign of
$$
c(0) = \frac{1}{4}(f_{xx}(0,0))^2+\frac{1}{6}f_{xxx}(0,0).
$$
Any generic, scalar, one-parameter system that satisfy the conditions in the theorem is locally topologically equivalent near the origin to \eqref{topological normal form flip}. Depending on the sign of the cubic term, the flip is called stable or unstable. If the cubic term is positive, the flip is stable, which means that the 2-cycle thus appearing is stable.

Regarding the Neimark-Sacker bifurcation we refer to \cite{kuznetsov} for the relevant theorem and normal form. We just state the nondegeneracy conditions:
\begin{equation}\tag{C.1}\label{generic NS condition 1}
\rho'(0)\neq0,
\end{equation}
\begin{equation}\tag{C.2}\label{generic NS condition 2}
e^{ik\theta_0}\neq 1~\text{for}~k=1,2,3,4,
\end{equation}
\begin{equation}\tag{C.3}\label{generic NS condition 3}
d(0)\neq0,
\end{equation}
where the system has smooth map $f(x,\alpha), ~x\in\mathbb{R} ^2$ with eigenvalues $\mu_{1,2}(\alpha) = \rho(\alpha)e^{i\varphi(\alpha)}$, where $\varphi(0)=\theta_0$. We will return to the third condition later.

Following \cite{kuznetsov} we write the system as
\begin{equation}\label{ax+fx}
\tilde{x} = Ax+F(x), x\in\mathbb{R} ^n
\end{equation}
where $F(x)=O(||x||^2)$ is a smooth function with Taylor expansion near $x^*=0$ as
\begin{equation}\label{taylor}
F(x) = \frac{1}{2}B(x,x) + \frac{1}{6}C(x,x,x) + O(||x||^4),
\end{equation}
where $B(x,y)$ and $C(x,y,z)$ are multilinear functions. In coordinates we have 
\begin{equation}\label{Bi}
B_i(x,y) = \sum_{j,k=1}^n \evalat{\diffp{F_i(\xi)}{{\xi_j}{\xi_k}}}{\xi=0}x_jy_k,
\end{equation}
and
\begin{equation}\label{Ci}
C_i(x,y) = \sum_{j,k,l=1}^n \evalat{\diffp{F_i(\xi)}{{\xi_j}{\xi_k}{\xi_l}}}{\xi=0}x_jy_kz_l,
\end{equation}
where $i = 1,2,\dots,n$.

\subsubsection*{Flip bifurcations}

In the case of a flip bifurcation, $A$ has a simple critical eigenvalue $\mu_1 = -1$, and the corresponding critical eigenspace $T^c$ is one-dimensional and spanned by an eigenvector $q\in\mathbb{R} ^n$ such that $Aq=\mu_1q$. Let $p$ be the \textit{adjoint} eigenvector, that is $A^Tp=\mu_1p$. Normalize $p$ with respect to $q$ so that $\langle p,q\rangle = 1$, where $\langle .,.\rangle$ is the standard scalar product in $\mathbb{R} ^n$. 

The critical normal form coefficient $c$, that determines the nondegeneracy of the flip bifurcation and allows us to predict the direction of bifurcation of the period-two cycle, is given by the \textit{invariant formula}
\begin{equation}\label{invariant formula}
c = \frac{1}{6}\langle p , C(q,q,q) \rangle - \frac{1}{2}\langle p , B(q,(A-I_n)^{-1}B(q,q)) \rangle.
\end{equation}

\subsubsection*{The Neimark-Sacker bifurcation} 

The third nondegeneracy condition \ref{generic NS condition 3} can be computed as
\begin{multline}\label{NS d}
d=\frac{1}{2}\text{Re}\bigg(e^{-i\theta_0}\bigg[\langle p , C(q,q,\bar{q}) \rangle + 2\langle p , B(q,(A-I_n)^{-1}B(q,\bar{q})\rangle\\
+ \langle p , B(\bar{q} , (e^{2i\theta_0}I_n-A)^{-1}B(q,q))\rangle\bigg]\bigg),
\end{multline}
where $q$ now is a \textit{complex} eigenvector corresponding to $\mu_1=e^{i\theta_0}$:
$$
Aq=e^{i\theta_0}q,~A\bar{q}=e^{-i\theta_0}\bar{q},
$$
where $\bar{q}$ is the vector of complex conjugates of the elements in $q$.

Note that the numbers \(c\) and \(d\) are also called the first Lyapunov coefficients.  Their size can be different using different methods but their sign is invariant.

\subsubsection*{List of codimension 2 bifurcations in $\mathbb{R} ^2$}

In our coming analysis we will consider a two-dimensional dynamical system, so we consider a two-dimensional, two-parameter discrete-time dynamical system
\begin{equation}\label{codim 2}
x\mapsto f(x,\alpha)
\end{equation}
with $x\in\mathbb{R} ^2$ and $\alpha = (\alpha_1,\alpha_2)^T$ and $f$ sufficiently smooth in $(x,\alpha)$ e.g. $f\in C^1$. Suppose that at $\alpha = \alpha_0$, the system \eqref{codim 2} has a fixed point $x^*$ for which the condition for fold, flip or Neimark-Sacker bifurcation is satisfied. Then there are eight degenerate cases that may occur.

\begin{enumerate}
\item
$\mu_1=1,b=0$ \textit{(cusp)}
\item
$\mu_1=-1,c=0$ \textit{(generalized flip)}
\item
$\mu_{1,2} = e^{\pm i\theta_0},d=0$ \textit{(Cheniciner bifurcation)}
\item
$\mu_1=\mu_2 = 1$ \textit{(1:1 resonance)}
\item
$\mu_1=\mu_2 = -1$ \textit{(1:2 resonance)}
\item
$\mu_{1,2} = e^{\pm i\theta_0} , \theta_0 = \frac{2\pi}{3}$ \textit{(1:3 resonance)}
\item
$\mu_{1,2} = e^{\pm i\theta_0} , \theta_0 = \frac{\pi}{2}$ \textit{(1:4 resonance)}
\item 
$\mu_1=1,\mu_2=-1$ \textit{(fold-flip bifurcation)}
\end{enumerate}

\subsection{Forward positivity of the system} 

Next we turn to the important matter of positive invariance. When using compartmental models in epidemiology, it is nonsensical to have trajectories with negative values. Due to the logistic growth we cannot hope that any initial point in the positive quadrant will remain there. We can however give sufficient conditions on the parameters so that there exists a compact subset of the positive quadrant that preserve non-negativity. Let us denote the mappings in \eqref{syst} by $f$ and $g$ respectively, so that $S_{n+1}=f(S_n,I_n)$ and $I_{n+1}=g(S_n,I_n)$. 

\begin{lemma}\label{f<1}
If $r\leq4$ then $f(x,y)<1$ for all $x,y\geq0$
\end{lemma}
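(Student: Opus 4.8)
The plan is to observe that $f(x,y) = rx(1-x) - \frac{\beta xy}{1+ax}$, and since $\beta > 0$, $a \geq 0$, and $x,y \geq 0$, the subtracted term $\frac{\beta xy}{1+ax}$ is non-negative. Hence $f(x,y) \leq rx(1-x)$ for all $x,y \geq 0$. So it suffices to bound the one-variable function $h(x) = rx(1-x)$ on $[0,\infty)$; in fact we only need $h(x) < 1$, though equality could be approached.

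First I would maximize $h(x) = rx(1-x) = r(x - x^2)$ over $x \in \mathbb{R}$. Differentiating, $h'(x) = r(1-2x) = 0$ gives $x = \tfrac{1}{2}$, and since $r > 0$ this is a global maximum, with $h(\tfrac12) = r \cdot \tfrac12 \cdot \tfrac12 = \tfrac{r}{4}$. Therefore $f(x,y) \leq h(x) \leq \tfrac{r}{4}$ for all $x,y \geq 0$.

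Next I would invoke the hypothesis $r \leq 4$, which gives $f(x,y) \leq \tfrac{r}{4} \leq 1$. To get the strict inequality $f(x,y) < 1$ claimed in the statement, I would note that equality $f(x,y) = 1$ would force simultaneously $\tfrac{\beta xy}{1+ax} = 0$, $x = \tfrac12$, and $r = 4$; the first condition combined with $x = \tfrac12 \neq 0$ and $\beta > 0$ forces $y = 0$, so the only candidate is $(x,y) = (\tfrac12, 0)$ with $r = 4$, where indeed $f(\tfrac12,0) = 4 \cdot \tfrac12 \cdot \tfrac12 = 1$. So strictly speaking the claim as stated needs either $r < 4$ or the exclusion of this single boundary point; I would either sharpen the hypothesis to $r < 4$ or note that the degenerate point $(\tfrac12,0)$ maps onto the boundary and is harmless for the positivity argument that follows.

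There is essentially no obstacle here — the estimate is elementary, the only subtlety being the boundary case $r = 4$ where $f$ attains the value $1$ at one isolated point, which I would flag explicitly rather than sweep under "$<$".
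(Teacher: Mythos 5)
Your proposal is correct and follows essentially the same route as the paper: bound $f(x,y)\leq rx(1-x)\leq r/4\leq 1$. Your additional observation that equality $f(x,y)=1$ is actually attained at $(x,y)=(\tfrac12,0)$ when $r=4$ is a fair catch — the paper's own one-line proof glosses over this and only really establishes $f(x,y)\leq 1$, so the strict inequality as stated requires either $r<4$ or excluding that single point.
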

\begin{proof}  
The function \(x(1-x)\leq 1/4\) and \(f(x,y)\leq r x(1-x)\) for non-negative \(x\) and \(y\).  Therefore if \(r\leq4\) \(f(x,y)<1\). 
\end{proof}
The next lemma gives an upper bound for the sum $S+I$, as well as $S$ and $I$ for all $n$.
\begin{lemma}\label{u^*}
The sum $S_n+I_n$ is bounded above by $\frac{(r-1+K)^2}{4Kr}$ for suitable choice of initial conditions.
\end{lemma}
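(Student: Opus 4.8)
The plan is to control the sum $u_n := S_n + I_n$ directly, since the infection term $\beta S_nI_n/(1+aS_n)$ appears with opposite signs in the two equations of \eqref{syst} and therefore cancels upon addition. Carrying this out gives
\[
u_{n+1} = rS_n(1-S_n) + (1-K)I_n .
\]
First I would eliminate $I_n$ in favour of $u_n$ via $I_n = u_n - S_n$, which rearranges the above into
\[
u_{n+1} = (1-K)u_n + \big[(r-1+K)S_n - rS_n^2\big].
\]
The bracketed quantity is a concave quadratic in $S_n$ whose maximum over all real $S_n$ equals $(r-1+K)^2/(4r)$, attained at $S_n = (r-1+K)/(2r)$ (which, incidentally, lies in $(0,1)$ under the standing assumptions $r>1$, $0<K<1$, though this is not needed). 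Hence, with $M := (r-1+K)^2/(4r)$, we obtain the scalar inequality $u_{n+1} \le (1-K)u_n + M$, valid with no positivity hypothesis whatsoever.

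Next I would exploit that the affine map $v \mapsto (1-K)v + M$ is a contraction, because $0 < 1-K < 1$ by $0<K<1$, and that its unique fixed point is $v^* = M/K = (r-1+K)^2/(4Kr)$ — precisely the bound claimed in the statement. A one-line induction then closes the argument: if the initial data satisfy $u_0 = S_0 + I_0 \le v^*$, then from $u_n \le v^*$ we get $u_{n+1} \le (1-K)v^* + M = v^*$. This makes ``suitable choice of initial conditions'' precise: it means $S_0 + I_0 \le (r-1+K)^2/(4Kr)$, and the conclusion is that the half-plane $\{S+I \le v^*\}$ is forward invariant for the sum.

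There is no genuine obstacle here; the only points requiring care are the algebraic rearrangement that isolates the pure quadratic in $S_n$ and the recognition that the resulting recursion is governed by a contraction with fixed point exactly $v^*$. Finally, once componentwise non-negativity $S_n, I_n \ge 0$ is available (established separately in the forward-positivity analysis), the bound on the sum immediately yields $S_n \le u_n \le v^*$ and $I_n \le u_n \le v^*$, which is the ``upper bound for $S$ and $I$ for all $n$'' alluded to before the statement.
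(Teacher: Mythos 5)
Your argument is correct and follows essentially the same route as the paper: the infection terms cancel upon summing, the remaining quadratic in $S_n$ is bounded by its vertex value $(r-1+K)^2/(4r)$, and the resulting affine recursion $u_{n+1}\le(1-K)u_n+M$ with $0<1-K<1$ has fixed point $M/K=(r-1+K)^2/(4Kr)$, which is forward invariant. The paper phrases the last step via comparison with the auxiliary system $u_{n+1}=M+(1-K)u_n$ and its globally stable fixed point, whereas you close it with a direct one-line induction; the content is identical.
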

\begin{proof}
We have
\begin{align*}
	S_{n+1}+I_{n+1} 
	&= rS_n(1-S_n)+(1-K)I_n \\
	&= rS_n(1-S_n) - (1-K)S_n+(1-K)(S_n+I_n)\\
	&= S_n( r-1+K-rS_n)+(1-K)(S_n+I_n)\\
	&\leq \frac{ r-1+K}{2r}( r-1+K-r\frac{ r-1+K}{2r}) + (1-K)(S_n+I_n)\\
	&= \frac{( r-1+K)^2}{4r} + (1-K)(S_n+I_n).
\end{align*}
Define $U_n=S_n+I_n$. By the above we have $U_{n+1}\leq \frac{( r-1+K)^2}{4r}+(1-K)U_n$. Now consider the dynamical system 
\begin{equation*}\label{u}
	u_{n+1} = \frac{( r-1+K)^2}{4r} + (1-K)u_n.
\end{equation*}
It has the globally asymptotically stable fixed point $u^* = \frac{( r-1+K)^2}{4Kr}$ since $(1-K)<1$. Hence $U_n\leq u_n\leq u^*$ if $U_0\leq u_0\leq u^*$.
\end{proof}
Denote \({\mathbb R}_\ge^2\) the set of all nonnegative points in \({\mathbb R}^2\).  To determine some   sufficient conditions (in terms of the parameters)for positive trajectories we consider two "generic" sets studied in this paper: 
\begin{enumerate}
\item \(\Omega_1\),  the trianlel with vertices  \((0,0)\), \((0,u^*)\) and \((u^*,0)\), if \(u^*\le1\);
\item  \(\Omega_2\),  the compact set bounded by the curves
\[
\begin{aligned}
C_1^2&=\{(x,y)\in{\mathbb R}_{\ge}^2: x=0, 0\le y\le u^*\},\\
C_2^2&=\{(x,y)\in{\mathbb R}_{\ge}^2: y=u^*-x,  0\le x\le \bar x\},\\
C_3^2&=\{(x,y)\in{\mathbb R}_{\ge}^2: y=\frac{r}{\beta}(1-x)(1+ax), \bar x \le x\le 1\},\\
C_4^2&=\{(x,y)\in{\mathbb R}_{\ge}^2: y=0, 0\le x\le 1\}\\
\end{aligned}
\]
where \(\bar x \) is the intersection point of the curve \(C^2_2\) and \(C^2_3\) which is between \(\frac12\) and \(1\),  if \(1<u^*<\frac{r}{\beta}\);

\item  \(\Omega_3\),  the compact set bounded by the curves
\[
\begin{aligned}
C_1^3&=\{(x,y)\in{\mathbb R}_{\ge}^2: x=0, 0\le y\le \frac{r}{\beta}\},\\
C_2^3&=\{(x,y)\in{\mathbb R}_{\ge}^2: y=u^*-x,  0\le x_1\le x_2\},\\
C_3^3&=\{(x,y)\in{\mathbb R}_{\ge}^2: y=\frac{r}{\beta}(1-x)(1+ax), 0\le x\le x_1\, x_2\le x  \le 1\},\\
C_4^3&=\{(x,y)\in{\mathbb R}_{\ge}^2: y=0, 0\le x\le 1\}\\
\end{aligned}
\]
where \(x_1 \) and \(x_2\) are  the intersection points of the curve \(C_2^3\) and \(C_3^3\) which lie in \((0,\frac12)\) and \((\frac12,1)\) respectively, if \(1<\frac{r}{\beta}<u^*\).
\end{enumerate}

Note that \(u^*>1\) is the same as \(r<(1+\sqrt{K})^2\), thus we will use them interchangeably.  Note also that our conditions does not cover all cases.

The following proposition gives sufficient conditions for positive trajectories for any initial state in the specified region. Its proof is given in Appendix \ref{proof}.
\begin{proposition}\label{Omega positive}
Assume \(0<K<1\).  
\begin{enumerate}
\item Assume \(\sqrt{K}+1\le r \le(\sqrt{K}+1)^2\) and either \(\beta<r\) or \(r<\beta<\frac{4Kr^2}{(r-1+K)^2}\).  
If \((S_0,I_0)\in\Omega_1\) then  \((S_n,I_n)\in\Omega_1\) for all \(n\).

\item Assume \((\sqrt{K}+1)^2<r\le 4\)  and \(\beta<\frac{r}{2u^*-1}\).  
If \((S_0,I_0)\in\Omega_2\) then  \((S_n,I_n)\in\Omega_2\) for all \(n\).

\item  Let \(a=1\).  Assume either that \(u^*\ge2\) and \(\beta<r\) or that \(\frac54<u^*<2\) and \(\beta<r v_+\) where \(v_+=\left( \frac{\sqrt{u^*+1} +\sqrt{u^*-1}}{2}\right)^2>0\). Then   \((S_n,I_n)\in\Omega_3\) for all \(n\) if  \((S_0,I_0)\in\Omega_3\).

\end{enumerate}
\end{proposition}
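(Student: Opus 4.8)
The plan is to prove forward invariance of each $\Omega_i$ directly: for every $(S,I)\in\Omega_i$ one checks that the image $(S',I'):=(f(S,I),g(S,I))$ again satisfies the inequalities defining $\Omega_i$. Most of these come for free. We have $I'=(1-K)I+\beta SI/(1+aS)\ge0$ whenever $S,I\ge0$. The identity and estimate from the proof of Lemma~\ref{u^*} give $S'+I'=rS(1-S)+(1-K)I\le\frac{(r-1+K)^2}{4r}+(1-K)(S+I)$, so $S+I\le u^*$ implies $S'+I'\le u^*$. By Lemma~\ref{f<1}, $S'<1$ (in case~1 one uses $r\le(1+\sqrt K)^2\le4$; in cases~2 and~3 one uses $r\le4$, which I take as a standing assumption needed for Lemma~\ref{f<1} to apply). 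Finally, since $f(S,I)=\frac{S}{1+aS}[r(1-S)(1+aS)-\beta I]$, each $\Omega_i$ is by construction contained in $\{f\ge0\}$, so $S'\ge0$. Hence the whole problem reduces to the one remaining constraint, coming from the curved boundary piece $C_3$:
$$I'\ \le\ \frac r\beta\bigl(1-S'\bigr)\bigl(1+aS'\bigr).$$
(On the straight boundary pieces the active constraint is $I'\le u^*-S'$, which is already known.)

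Before this I would settle the geometry of the $\Omega_i$: that the line $y=u^*-x$ and the curve $y=\frac r\beta(1-x)(1+ax)$ meet in the claimed abscissae (one point $\bar x\in(\tfrac12,1)$ for $\Omega_2$; two points $x_1\in(0,\tfrac12)$ and $x_2\in(\tfrac12,1)$ for $\Omega_3$), and hence that $d(x):=\frac r\beta(1-x)(1+ax)-(u^*-x)\ge0$ on the line's $x$-range. As $d$ is concave, this is decided by the signs of $d$ at the endpoints, and the hypotheses $\beta<r/(2u^*-1)$ (case~2) and, with $a=1$, $\beta<rv_+$ (case~3) are exactly what force the crossings into the stated sub-intervals; the number $v_+=\bigl((\sqrt{u^*+1}+\sqrt{u^*-1})/2\bigr)^2$ and the thresholds $u^*=\tfrac54$ and $u^*=2$ arise from solving $t+\tfrac1{4t}=u^*$ for $t=r/\beta$, i.e. from deciding whether the line lies above $C_3$ on all of $[0,1]$.

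To prove the key inequality I would split on the size of $S'$ relative to the crossing $\bar x$ (for $\Omega_3$ the analogous thresholds are $x_1,x_2$). If $S'\le\bar x$, then $I'\le u^*-S'$ together with $u^*-x\le\frac r\beta(1-x)(1+ax)$ on $[0,\bar x]$ (i.e. $d\ge0$ there) gives $I'\le\frac r\beta(1-S')(1+aS')$ and we are done. If $S'>\bar x>\tfrac12$, then $S'=rS(1-S)-\beta SI/(1+aS)\le rS(1-S)$ forces $rS(1-S)>\bar x$, confining $S$ to a short interval about $\tfrac12$; on that interval the defining inequality of $\Omega_i$ bounds $I$, hence also $I'=(1-K)I+\beta SI/(1+aS)$, from above, and the $\beta$-bound makes this upper bound small enough that $I'\le\frac r\beta(1-S')(1+aS')$ holds because the right-hand side is bounded away from $0$ (as $S'\le r/4<1$). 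Case~1 is the easiest: $\Omega_1\subseteq\{0\le S\le u^*\}$ with $u^*\le1$ already gives $S'\le u^*\le1$, and the one inequality to check, $r(1-s)(1+as)\ge\beta(u^*-s)$ for $s\in[0,u^*]$, is concave in $s$ and holds at $s=0$ (this is $\beta\le r/u^*$, a consequence of the hypotheses) and at $s=u^*$ (this is $u^*\le1$), hence throughout. In case~3 the choice $a=1$ keeps $C_3$ a genuine parabola, so all of the above reduces to polynomial inequalities settled by the same concavity-plus-endpoints reasoning.

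The step I expect to be the real obstacle is the "$S'$ large" branch: bounding $I'$ tightly enough that the image cannot cross above $C_3$ when $f(S,I)$ is near its maximum $r/4$. This is the only place where the nonlinearity of the incidence term $\beta SI/(1+aS)$ genuinely matters, and it is what forces the exact constants $\beta<r/(2u^*-1)$ and $\beta<rv_+$. A secondary difficulty is organizational: each $\Omega_i$ already subdivides into two or three regimes according to how the line meets $C_3$, and each regime must be combined with the small-/large-$S'$ dichotomy, so although every single estimate is elementary the full argument is lengthy — consistent with its placement in the appendix.
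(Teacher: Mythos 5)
Your scaffolding is essentially the paper's: invariance is checked constraint by constraint, with $I'\ge0$ immediate, $S'+I'\le u^*$ from the recursion $U_{n+1}\le\frac{(r-1+K)^2}{4r}+(1-K)U_n$ (Lemma~\ref{u^*}), $S'\le1$ from Lemma~\ref{f<1}, $S'\ge0$ from concavity of $h(x,y)=r(1-x)(1+ax)-\beta y$ together with vertex/endpoint evaluation (the paper's Lemmas~\ref{f>0}, \ref{X2}, \ref{X3}), and the location of the abscissae $\bar x,x_1,x_2$ read off from the discriminant of $\frac r\beta(1-x)(1+ax)=u^*-x$, exactly as you describe. Your case~1 is complete and coincides with the paper's argument.

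The genuine gap is the branch $S'>\bar x$ in cases~2 and~3, which you yourself flag as the real obstacle. Your plan there --- confine $S$ near $\tfrac12$, bound $I$ by the defining inequalities of $\Omega_i$, deduce a bound on $I'$, and compare with $\frac r\beta(1-S')(1+aS')$ --- is never executed, and the one reason you offer for why it closes (``the right-hand side is bounded away from $0$ as $S'\le r/4<1$'') does not hold up: case~2 allows $r=4$, where $S'$ can reach $1$ and the right-hand side vanishes, and even for $r<4$ mere positivity is not enough, since the bound on $I$ that $\Omega_i$ gives you on a neighbourhood of $S=\tfrac12$ is of order $r/\beta$, not small. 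The true mechanism is a coupling --- $S'>\bar x>\tfrac12$ forces the incidence term $\beta SI/(1+aS)$, hence $I$, hence $I'$, to be small --- and converting that into $\beta I'\le r(1-S')(1+aS')$ is precisely the quantitative step that is missing. The paper sidesteps this computation entirely: it argues by contradiction, combining the already-established $I_{n+1}\le u^*-S_{n+1}$ with the fact that $\bar x$ is the crossing of the line $x+y=u^*$ and the curve $y=\frac r\beta(1-x)(1+ax)$, so that no new estimate on $I'$ is needed. (Even the paper's version of this step is terse, invoking monotonicity of the curve on $[\bar x,1]$ in a direction that deserves scrutiny; your instinct that this is where the difficulty lives is correct, but as written your proposal does not close it.)
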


Note that the second item corresponds to the case \(1<u^*<\frac{r}{\beta}\) which is implied by the condition on \(\beta\) and the third deals with a special case when \(u^*>\frac{r}{\beta}>1\).  
We point out that \(f(x,y)\) can still be positive when \(r\) is larger than \(4\).  This can be seen by the following reformulation, for example in the last item of the above Proposition, of the conditions as follows. 
A straightforward and tedious calculation shows that 
\(\frac54<u^*\) is equivalent to \(r>\frac{2+3K+\sqrt{5K^2+20K}}{2}<(1+\sqrt{K})^2\).  So for the bounds of \(r\) in terms \(K\).  
\[
\frac54<u^*<2 \iff \frac{2+3K+\sqrt{5K^2+20K}}{2}<r<3K+1+2\sqrt{2K(K+1)}.
\]

\section{Stability analysis of fixed points and description of bifurcation points}

Solving the equations 
\(S=f(S,I),I=g(S,I)\),
yields two points \((S^*,I^*)\) of the system defined by \eqref{syst}: 
\(E_0 = (\frac{r-1}{r},0)\), and 
 \(E_1 = (\frac{K}{\beta-aK},\frac{r-1}{\beta-aK}-\frac{rK}{(\beta-aK)^2})\).  The fixed point \(E_0\) is called {\em disease free} and \(E_1\) 
 is called {\em endemic} in epidemiology. 

In the epidemical modelling we have the positive restrictions on \(S^*,I^*\).
This means  \(E_0\ge0\) that is \(r>1\);  and \(E_1\ge0\) if and only if \(\beta-aK>0\) and \(\frac{r-1}{\beta-aK}-\frac{rK}{(\beta-aK)^2}\ge 0\).  The last 
inequality implies \(\beta-aK>0\)  if \(r>1\).  So \(E_1\) is nonnegative if 
\[
r>1 \text{ and } \ \beta> \beta_0:=\frac{K(r+a(r-1))}{r-1} 
\]
Hence we have proved the following proposition.

\begin{proposition} The SIR model defined by \eqref{syst} has at most two non-negative fixed points: The disease free fixed point \(E_0=\left(\frac{r-1}{r},0\right)\) and the endemic fixed point \(E_1=\left(\frac{K}{\beta-aK}, \frac{r-1}{\beta-aK}-\frac{rK}{(\beta-aK)^2}\right)\).  
More precisely,
\begin{itemize}
\item there is one fixed point \(E_0\) if \(r>1\), and \(\beta<\beta_0\); 
\item there is one fixed point \(E_0\) and 
\(E_1\), if \(r>1\) and \(\beta>\beta_0\) . 
 \end{itemize}
\end{proposition}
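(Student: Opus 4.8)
The plan is to solve the fixed-point system $S=f(S,I)$, $I=g(S,I)$ exhaustively. Writing out the second equation of \eqref{syst} as $I=(1-K)I+\frac{\beta SI}{1+aS}$ and factoring gives $I\bigl(K-\frac{\beta S}{1+aS}\bigr)=0$, so every fixed point lies on one of two branches: $I=0$, or $K(1+aS)=\beta S$, i.e. $S=\frac{K}{\beta-aK}$ (which presupposes $\beta\neq aK$). I would treat these two branches in turn and check that together they account for every fixed point, yielding the ``at most two'' claim.

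On the branch $I=0$ the first equation reduces to $S=rS(1-S)$, hence $S\bigl(r(1-S)-1\bigr)=0$, giving $S=0$ or $S=\frac{r-1}{r}$; the first is the trivial origin and the second is $E_0$, which is nonnegative precisely when $r>1$. On the branch $S=S^*:=\frac{K}{\beta-aK}$ I would use the branch relation $\frac{\beta SI}{1+aS}=KI$ to rewrite the first equation as $S=rS(1-S)-KI$, solve for $I$ to get $I=\frac{S(r-1-rS)}{K}$, and substitute $S=S^*$; after simplification this is exactly the stated $I^*=\frac{r-1}{\beta-aK}-\frac{rK}{(\beta-aK)^2}$, i.e. the point $E_1$. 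Apart from the trivial origin, there are thus no fixed points other than $E_0$ and $E_1$.

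It then remains to determine when $E_1$ is nonnegative. Since $K>0$, the requirement $S^*\ge 0$ forces $\beta-aK>0$; given that, multiplying $I^*\ge 0$ through by $(\beta-aK)^2>0$ turns it into $(r-1)(\beta-aK)\ge rK$. A short computation shows this is equivalent to $\beta\ge\beta_0$, where one also checks $\beta_0-aK=\frac{rK}{r-1}>0$ under $r>1$; conveniently, $(r-1)(\beta-aK)\ge rK>0$ together with $r>1$ already forces $\beta-aK>0$, so under $r>1$ the single inequality $\beta\ge\beta_0$ characterizes nonnegativity of $E_1$. Finally one observes that at $\beta=\beta_0$ one has $I^*=0$ and $S^*=\frac{r-1}{r}$, so $E_1$ coincides with $E_0$; hence for a genuinely distinct endemic point one needs the strict inequality $\beta>\beta_0$. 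Combining the two branches gives the stated dichotomy.

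There is no serious obstacle here; the proof is elementary algebra. The only points requiring care are bookkeeping ones: not overlooking a solution branch, explicitly recording the degenerate case $\beta=aK$ (for which the endemic branch is empty), keeping track of signs when clearing the denominator $(\beta-aK)^2$, and noting that $r>1$ collapses the nonnegativity of $E_1$ to the single threshold condition $\beta\ge\beta_0$ rather than two separate inequalities.
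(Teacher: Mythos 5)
Your proposal is correct and follows essentially the same route as the paper: solve the fixed-point equations directly (the paper simply states that solving $S=f(S,I)$, $I=g(S,I)$ yields $E_0$ and $E_1$, then derives the nonnegativity conditions $\beta-aK>0$ and $\beta>\beta_0$ exactly as you do). Your write-up is in fact more careful than the paper's, since you make the two solution branches explicit, record the trivial fixed point at the origin and the degenerate case $\beta=aK$, and observe that $E_1$ coincides with $E_0$ at $\beta=\beta_0$.
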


\subsection{Asymptotical analysis of the fixed points}

Now we turn to local stability analysis of the dynamical system \eqref{syst}.  

By standard procedure in stability analysis, we compute the Jacobian matrix evaluated at each fixed point and determine the location of its eigenvalues.  
For our dynamical system (\ref{syst}), the Jacobian matrix is
$$
J(S,I) = 
\begin{pmatrix}
r-2rS - \bigg(\frac{\beta I}{1+aS}-\frac{a\beta SI}{(1+aS)^2}\bigg) & -\frac{\beta S}{1+aS} \\
\frac{\beta I}{1+aS}-\frac{a\beta SI}{(1+aS)^2} & 1-K+\frac{\beta S}{1+aS}
\end{pmatrix}
$$
which is simplified to
\begin{equation*}
J(S,I) = 
\begin{pmatrix}
r-2rS - \frac{\beta I}{(1+aS)^2} & -\frac{\beta S}{1+aS} \\
\frac{\beta I}{(1+aS)^2} & 1-K+\frac{\beta S}{1+aS}
\end{pmatrix}.
\label{Jacobian}
\end{equation*}

\subsection*{Stability of disease free fixed point}

At the disease-free fixed point, the Jacobian matrix is
$$
J(E_0) = J\left (\left(\frac{r-1}{r},0\right)\right)=
\begin{pmatrix}
2-r & -\frac{\beta(r-1)}{r+a(r-1)}\\
0 & 1-K+\frac{\beta(r-1)}{r+a(r-1)}
\end{pmatrix}.
$$
Clearly the eigenvalues are
\begin{align*}
\lambda_1 &= 2-r\\
\lambda_2 &= 1-K+\frac{\beta(r-1)}{r+a(r-1)}.
\end{align*}

To find out when $E_0$ is stable, we must solve the system of inequalities
$$
\begin{cases}
|2-r|<1\\
|1-K+\frac{\beta(r-1)}{r+a(r-1)}|<1
\end{cases}
\iff
\begin{cases}
1<r<3\\
-2 < \frac{\beta(r-1)}{r+a(r-1)}-K<0.
\end{cases}
$$

Then we have
$$ 
\begin{cases}
1<r<3\\
K-2 < \frac{\beta(r-1)}{r+a(r-1)}<K
\end{cases}
\iff
\begin{cases}
1<r<3\\
\frac{(K-2)(r+a(r-1))}{r-1}<\beta<\frac{K(r+a(r-1))}{r-1}.
\end{cases}
$$
Now, since $\beta$ is the coefficient for the force of infection, it must be positive. It is clear, since $K
<1$ that the lower bound for $\beta$ is negative. So, to summarize, if $1<r<3$ and $0<\beta< \beta_0$, where
$$
\beta_0=\frac{K(r+a(r-1))}{r-1},
$$ 
then $E_0$ is locally asymptotically stable.

Note that if one of the conditions is violated but not on the boundaries then \(E_0\) is a saddle point.  That is, \(E_0\) is a saddle point if  \(r>3\) or \(r<1\) together with \(\beta<\beta_0\), or \(1<r<3\) and \(\beta>\beta_0\). 

\subsection*{Stability of endemic fixed point}

This is a more complicated case.   Recall that it is required that \(r>1\) and 
\(\beta>\beta_0\) for  $E_1$ being positive.

The Jacobian matrix evaluated at $E_1$ is
$$
J(E_1) = 
\begin{pmatrix}
\frac{2 K r}{a K-\beta }+\frac{K (a (r-1)+r)}{\beta }+1 & -K \\
r+\frac{K (a-(a+1) r)}{\beta }-1 & 1 \\
\end{pmatrix},
$$
whose characteristic polynomial is 
$$
p(z) = z^2 - p_1z+p_0
$$
where $p_1=-\text{trace}(J(E_1))$ and $p_0=\det(J(E_1))$. More precisely
\begin{align*}
p_1 &= -\frac{2 K r}{a K-\beta }-\frac{K (a (r-1)+r)}{\beta }-2\\
p_0 &= 1+K \left(\frac{2 r}{a K-\beta }-\frac{(K-1) (a (r-1)+r)}{\beta }+r-1\right).
\end{align*}
By \eqref{stability conditions trace det}
$$
\begin{aligned}
|p_1|&<1+p_0\\
|p_0|&<1 \\
r&>1\\
\beta&>\frac{K(r+a(r-1))}{r-1}.
\end{aligned}
$$

With help of Mathematica, we get the following parameter constraints
$$
\begin{cases}
1<r\leq3\\
\beta_0<\beta<\beta_2
\end{cases}
~\text{or}~
\begin{cases}
3<r<r_{max}\\
\beta_1<\beta<\beta_2
\end{cases}
$$
where
\begin{align*}
\beta_0 =&\tfrac{K(r+a(r-1))}{r-1}\\
\beta_1 =& \frac{1}{2}\bigg(\tfrac{K(2a(3+K(r-1)-r)+(K+2)r)}{4+K(r-1)}+ \sqrt{\tfrac{K^2((K+2)^2r^2+4a^2(r+1)^2+4ar(14-5K-2r+3Kr))}{(4+K(r-1))^2}} \bigg)\\
\beta_2 =&\frac{1}{2}\bigg(a(2K-1) + \tfrac{r(K+1)}{r-1} + \sqrt{a^2+\tfrac{2ar(3K-1)}{r-1} + \tfrac{r^2(K+1)^2}{(r-1)^2}}\bigg)\\
r_{max} = &\frac{1}{2} \sqrt{\tfrac{16 a^2+88 a K-32 a+25 K^2+40 K+16}{K^2}}+\tfrac{4 a+5 K+4}{2 K}.
\end{align*}

Furthermore, \(E_1\) is a saddle point if (i)  \(1<r\le3\) and \(\beta<\beta_0\), or (ii) \(1<r\le3\), or
\(\beta>\beta_2\) or (iii) \(3<r<r_{max}\) and \(\beta<\beta_1\), or (vi)  \(3<r<r_{max}\) and 
\(\beta>\beta_2\).
\\

The above discussion proves the following theorem.

\begin{theorem} The SIR model defined by \eqref{syst} has the following stability properties:
\begin{enumerate}
\item For  
$$
\begin{cases}
1<r<3\\
0<\beta<\beta_0
\end{cases}
$$
then the disease-free equilibrium $E_0$ is locally asymptotically stable. Finally, 
\item if
$$
\begin{cases}
1<r\leq3\\
\beta_0<\beta<\beta_2
\end{cases}
~\text{ or }~
\begin{cases}
3<r<r_{max}\\
\beta_1<\beta<\beta_2
\end{cases}
$$
then the endemic equilibrium $E_1$ is locally asymptotically stable.
\end{enumerate}
\end{theorem}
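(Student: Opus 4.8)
The plan is to apply the linearization criterion, Theorem~\ref{sufficient conditions}, at each fixed point, reduce local asymptotic stability to the $2\times2$ trace and determinant conditions \eqref{stability conditions trace det}, and then solve the resulting inequalities in $(r,\beta)$ with $K\in(0,1)$ and $a\ge 0$ as parameters. The two parts are handled separately, one for $E_0$ and one for $E_1$.

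For the disease-free point $E_0=(\tfrac{r-1}{r},0)$, the $(2,1)$-entry of the Jacobian, $\tfrac{\beta I}{(1+aS)^2}$, vanishes at $I=0$, so $J(E_0)$ is upper triangular and its eigenvalues are the diagonal entries $\lambda_1=2-r$ and $\lambda_2=1-K+\tfrac{\beta(r-1)}{r+a(r-1)}$. By Theorem~\ref{sufficient conditions} it suffices to impose $|\lambda_1|<1$ and $|\lambda_2|<1$. The first is exactly $1<r<3$. Writing the second as $-2<\tfrac{\beta(r-1)}{r+a(r-1)}-K<0$, the left inequality holds automatically because $\beta>0$, $0<K<1$ and $r-1,\ r+a(r-1)>0$, while the right one rearranges to $\beta<\beta_0:=\tfrac{K(r+a(r-1))}{r-1}$; as $\beta_0>0$, the interval $(0,\beta_0)$ is nonempty, which gives part~(1).

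For the endemic point $E_1$ I would substitute its coordinates into the Jacobian, using the fixed-point relation $\tfrac{\beta S^*}{1+aS^*}=K$ to bring it to the matrix displayed above, in which $-p_1$ is the trace and $p_0$ the determinant. By \eqref{stability conditions trace det}, $E_1$ is stable exactly when $|p_1|<1+p_0$ and $|p_0|<1$, and I would solve this together with the existence constraints $r>1$ and $\beta>\beta_0$. Splitting the two conditions into the four inequalities $1+p_1+p_0>0$, $1-p_1+p_0>0$, $1-p_0>0$, $1+p_0>0$ and clearing the denominators $\beta$ and $aK-\beta$ (each of fixed sign on $\{r>1,\ \beta>\beta_0\}$, since there $\beta>\beta_0>aK$) reduces everything to intersecting four regions in the $(r,\beta)$-plane. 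One recognizes the boundary $1+p_1+p_0=0$ (eigenvalue $1$) as the curve $\beta=\beta_0$ along which $E_1$ merges with $E_0$, the boundary $1-p_1+p_0=0$ (eigenvalue $-1$) as a period-doubling curve, and $p_0=1$ as a Neimark-Sacker curve. Carrying out the elimination (a computation I would delegate to a symbolic package) gives that the feasible set is exactly
\[
\{\,1<r\le 3,\ \beta_0<\beta<\beta_2\,\}\ \cup\ \{\,3<r<r_{max},\ \beta_1<\beta<\beta_2\,\},
\]
with $\beta_1,\beta_2,r_{max}$ the explicit expressions listed. Along the way one checks that the quantities under the radicals in $\beta_1,\beta_2,r_{max}$ are nonnegative on the relevant ranges and that $\beta_0<\beta_2$ and $\beta_1<\beta_2$ strictly on the stated $r$-intervals (with $\beta_0=\beta_1$ at $r=3$ and $\beta_1,\beta_2$ coalescing as $r\to r_{max}$), so that the two windows are nonempty and join continuously at $r=3$. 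This proves part~(2).

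The $E_0$ computation is elementary; essentially all the work is in the endemic case, where the main obstacle is the case analysis. The four inequalities are nonlinear in $\beta$, and their relative ordering changes as $r$ passes through $3$, which is precisely why the stability region splits there and why the binding lower bound switches from $\beta_0$ (the fold curve) to $\beta_1$ (the flip curve), while $\beta_2$ (the Neimark-Sacker curve) stays the upper bound throughout and $r_{max}$ marks where the stability window pinches shut. Sorting out which constraint is active in each $r$-regime, and verifying that the resulting description is internally consistent and nonempty, is the delicate step; the individual algebraic reductions are routine but long, and are best collected in the appendix or done with a computer algebra system.
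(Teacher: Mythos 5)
Your proposal is correct and follows essentially the same route as the paper: triangularity of $J(E_0)$ gives the eigenvalues directly for part (1), and part (2) reduces to the trace--determinant (Jury) conditions \eqref{stability conditions trace det} on $J(E_1)$, with the resulting system of inequalities in $(r,\beta)$ solved symbolically — exactly as the paper does with Mathematica. Your identification of the three boundary pieces with the fold, flip, and Neimark--Sacker curves matches the paper's subsequent bifurcation discussion.
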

It is depicted in Figure~\ref{stability}.
\begin{figure}[!h]
\centering
\includegraphics[width=.9\linewidth]{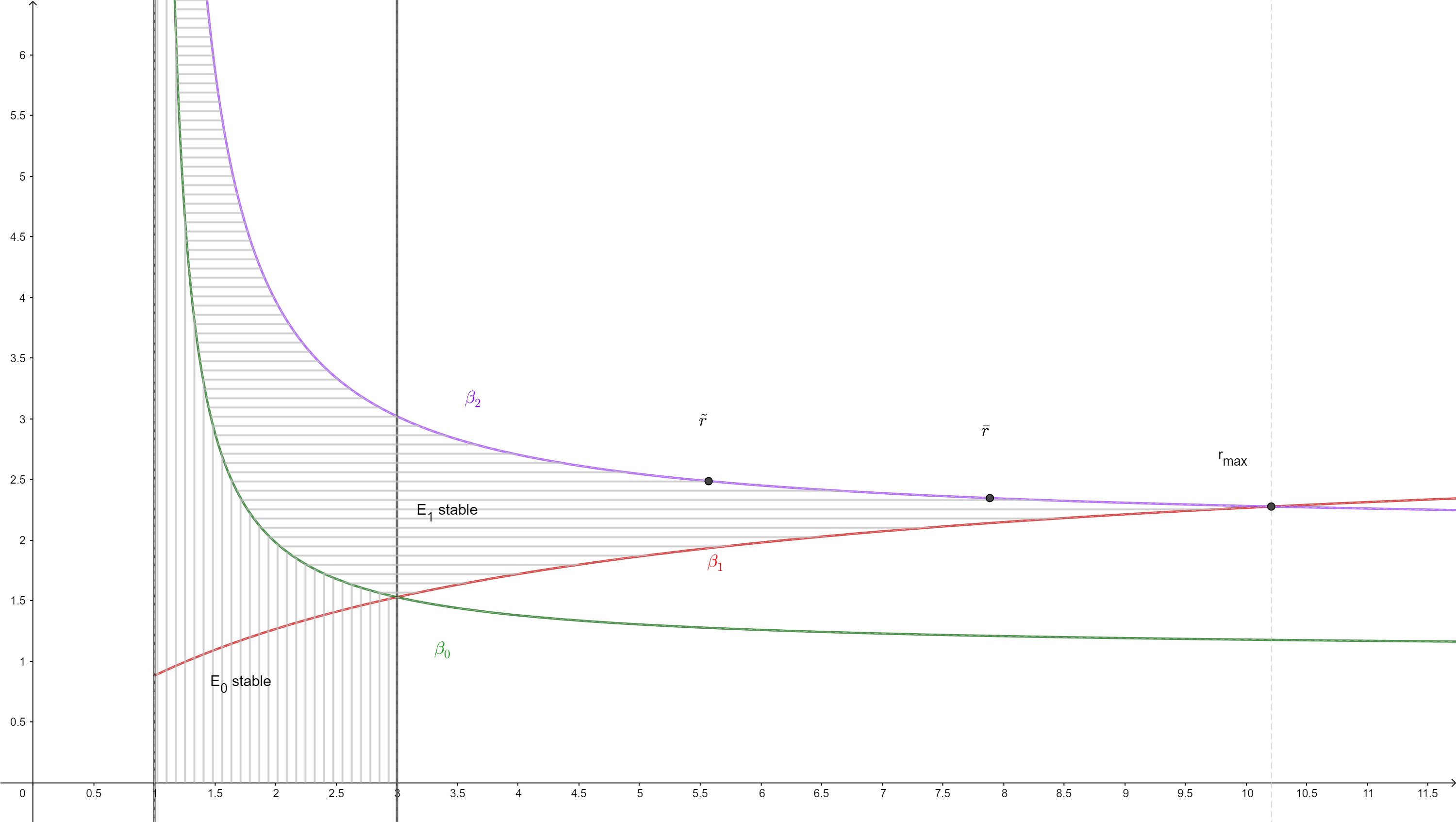}
\caption{Stability regions on  $r\beta$-plane with $a=0.2$, $K=0.9$.}
\label{stability}
\end{figure}

%

\subsection{Bifurcation points}

We have found conditions on the parameters $r$ and $\beta$ for stability of \eqref{syst}. 
Now we investigate how the dynamics of the system \eqref{syst} changes under variation of these parameters.  
In this section, we discuss flip bifurcation, which is one of the codimension~1 bifurcation, the bifurcations that depend on just one parameter based on Definition~\ref{def:codim1}.
Since bifurcation analysis relies  heavily on  on the eigenvalues of the Jacobian of each fixed point  at the bifurcation points,
we present our findings of eigenvalues on the boundaries of stability regions as follows.   \\

\noindent \textbf{Disease free fixed point}, $E_0 = (\frac{r-1}{r},0)$.  In this case
the stability conditions were $1<r<3$ and $0<\beta<\beta_0$. Recall however that the lower bound for $\beta$ was derived under the biological constraint that $\beta$ has to be non-negative. Hence 0 is not mathematically the lower bound for stability and can therefore be ignored here. The conditions, with this in mind, can be violated as follows:

\begin{enumerate}\renewcommand{\labelenumi}{(\roman{enumi})}
\item
$r = 1, 0<\beta<\beta_0 \implies \lambda_1 = 1,|\lambda_2|<1$
\item
$r = 1,\beta = \beta_0 \implies \lambda_1 = 1, \lambda_2 = -1$
\item
$r = 3, 0<\beta<\beta_0 \implies \lambda_1 = 1, |\lambda_2| <1$
\item
$r = 3, \beta = \beta_0 \implies \lambda_1 = 1, \lambda_2 = -1$
\item
$1<r<3, \beta = \beta_0 \implies \lambda_1 = 1, |\lambda_2| <1$
\end{enumerate}

\noindent \textbf{Endemic fixed point}, $E_1 = (\frac{K}{\beta-aK},\frac{r-1}{\beta-aK} -\frac{rK}{(\beta-aK)^2})$: Now
the stability conditions were $1<r\leq3$ and $\beta_0<\beta<\beta_2$ or $3<r<r_{max}$ and $\beta_1 <\beta<\beta_2$. Note that when $r = 3$, we get $\beta_0=\beta_1$, and when $r=r_{max}$ we have $\beta_0 = \beta_2$. In fact, we can also have $\beta_0 = \beta_2$ but only when $r = 0$ or $r = \frac{a}{a+1}<1$ so it has no effect here. The stability conditions can be violated as follows:

\begin{enumerate}\renewcommand{\labelenumi}{(\roman{enumi})}
\item
$1<r<3,\beta = \beta_0 \implies \lambda_1=1,|\lambda_2|<1$
\item
$1<r<3,\beta = \beta_2 \implies \lambda_{1,2} = e^{\pm i \theta_0}, 0<\theta_0<\pi$
\item
$r = 3, \beta = \beta_0 \implies \lambda_1 = 1, \lambda_2 = -1$
\item
$r = 3, \beta = \beta_2 \implies \lambda_{1,2} = e^{\pm i \theta_0}, 0<\theta_0<\pi$
\item
$r = r_{max},\beta= \beta_2 \implies \lambda_1 = -1 ,\lambda_2 = -1$ 
\item
$3<r<r_{max}, \beta = \beta_1 \implies \lambda_1 = -1, |\lambda_2|<1$
\item
$3<r<r_{max}, \beta = \beta_2 \implies \lambda_{1,2} = e^{\pm i \theta_0}, 0<\theta_0<\pi$
\end{enumerate}

Co-dimension 2 bifurcations occur when the non-degeneracy  conditions are violated.    By identifying the list of the eight co-dimension 2 bifurcations and the eigenvalues listed above  we can conclude:
\begin{itemize}
\item {\bf Bifurcations from $E_0$}

There is  a fold-flip bifurcation when $r=3,\beta=\beta_0$.

\item {\bf Bifurcations from $E_1$}

We have 1:2, 1:3 and 1:4 resonances when $\beta = \beta_2$ and $r = r_{max},\tilde{r},\bar{r}$ respectively. 
They are depicted in Figure~\ref{stability}.  Apart from that, there is a fold-flip bifurcation at $r=3,\beta = \beta_0$ (=\(\beta_1\)).\\

\end{itemize}

In next two sections we investigate the degeneracy of the flip and the Neimark-Sacker bifurcations.

\section{Analysis of flip bifurcation}

\noindent \textit{Bifurcations from $E_0$:}  At $\beta = \beta_0$ for all $1<r<3$, there is a fold bifurcation, and $E_0$ loses 
stability to \(E_1\) if  \(\beta\) increases and  passes  \(\beta_0\) for all $1<r<3$.  Moreover 
there is a flip bifurcation at $r = 3$ for all $\beta<\beta_0$. In this case \(E_0\) loses stability to some periodic orbits which we will show later by showing this 
flip bifurcation is generic and stable. 
Note that these statements coincide with the remark on \(E_0\) being a saddle point made in the previous section.  \\

\noindent \textit{Bifurcations from $E_1$:} For $1<r<3$ and $\beta = \beta_0$, there is a fold bifurcation, and $E_1$ loses stability to $E_0$. When $\beta=\beta_1$ and $3<r<r_{max}$, there is a flip. For $1<r<r_{max}$ and $\beta = \beta_2$ there is a Neimark-Sacker bifurcation, except for some degenerate cases which we deal with later.\\

These can be seen in Figure~\ref{stability}.
Now we turn to study the genericity conditions on some of these bifurcation points. This is somewhat technical, and include some rather lengthy computations which are presented in the appendices.
\subsection{Periodic-doubling bifurcation from $E_0$}

First we prove the following proposition.
\begin{proposition}
 Assume $0<\beta < \frac{1}{2}(3K+2aK)$.  Then there is a flip bufurcation from \(E_0\) at \(r=3\).  
\end {proposition}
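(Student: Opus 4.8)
The plan is to verify the hypotheses of Theorem~\ref{generic flip} for a suitable one-dimensional reduction of the system at the candidate bifurcation point. At $r=3$ the disease-free point is $E_0=(\tfrac{2}{3},0)$, and from the analysis of $J(E_0)$ the eigenvalues there are $\lambda_1=2-r=-1$ and $\lambda_2=1-K+\tfrac{\beta(r-1)}{r+a(r-1)}=1-K+\tfrac{2\beta}{3+2a}$. The first thing I would check is that $\lambda_1=-1$ is the \emph{only} eigenvalue on the unit circle, i.e. $|\lambda_2|<1$; the hypothesis $0<\beta<\tfrac12(3K+2aK)=\tfrac{K(3+2a)}{2}$ is exactly what forces $\tfrac{2\beta}{3+2a}<K$, hence $\lambda_2<1$, and the lower bound $\lambda_2>-1$ follows from $K<1$ and $\beta>0$. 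So the center manifold at $r=3$ is one-dimensional, tangent to the eigenvector $q$ of $\lambda_1=-1$, and the flip reduces to a scalar problem.

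Next I would set up the reduction. Since $J(E_0)$ is upper triangular, the $\lambda_1=-1$ eigenvector is $q=(1,0)^T$, and the $I$-axis is invariant-ish enough that the relevant center manifold is a graph $I=h(S)$ over the $S$-coordinate near $E_0$; but in fact, because the disease-free line $I=0$ is invariant under the map (the second equation has $I_{n+1}$ proportional to $I_n$), the center manifold at $r=3$ is simply contained in $\{I=0\}$, and the restricted map is the logistic map $S\mapsto \varphi(S):=rS(1-S)$ with $r=3$. I would then translate to put the fixed point at the origin, $x=S-\tfrac{r-1}{r}$, and compute the derivatives of $\varphi$ at the fixed point: $\varphi_x=2-r$ (which is $-1$ at $r=3$), $\varphi_{xx}=-2r$, $\varphi_{xxx}=0$, and $\varphi_{xr}=\tfrac{d}{dr}(2-r)\big|+\ldots$— more carefully, writing $x^*(r)=\tfrac{r-1}{r}$ and differentiating the fixed-point relation, one gets $f_{x\alpha}(0,0)\neq0$ with $\alpha=r-3$. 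Then condition \eqref{generic flip conditions 1} becomes $\tfrac12(-2r)^2+\tfrac13\cdot 0=2r^2=18\neq0$ at $r=3$, and \eqref{generic flip conditions 2} is the nonvanishing of that mixed derivative, which is a short explicit computation giving something like $-\tfrac12\neq0$. Having both nondegeneracy conditions, Theorem~\ref{generic flip} applies and yields the flip.

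The one genuine subtlety — and the main thing that needs care rather than the main obstacle — is justifying that the center-manifold reduction really does land on $\{I=0\}$, or at least that the off-center direction does not contribute to the cubic normal-form coefficient. The cleanest route is to observe that $\{I=0\}$ is exactly invariant, so it \emph{is} a (the) center manifold at $r=3$, and the reduced map on it is literally the logistic map; then no center-manifold Taylor expansion of $h$ is needed at all and the computation collapses to the one-dimensional one above. Alternatively, one could apply the invariant formula \eqref{invariant formula} for the critical normal form coefficient $c$ directly in $\mathbb{R}^2$ with $q=(1,0)^T$ and the adjoint eigenvector $p$ of $J(E_0)^T$ for $\lambda_1=-1$, which here is $p=(1,p_2)^T$ for an appropriate $p_2$, normalized so $\langle p,q\rangle=1$ (giving $p=(1,0)^T$ since the first component of $q$ already pairs to $1$); computing $B$ and $C$ from the Taylor coefficients of $F$ and plugging in shows $c\neq0$ under the stated bound on $\beta$. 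I expect the $\lambda_2\ne -1$ genericity check (ruling out the $1{:}2$ resonance, i.e. $\beta\ne\beta_0$ at $r=3$, which is excluded since $\beta<\tfrac{K(3+2a)}{2}=\beta_0|_{r=3}$) to be the place where the hypothesis is used, and the rest to be routine differentiation of the logistic map.
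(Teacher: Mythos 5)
Your proof is correct, and for the proposition as actually stated it contains the paper's entire argument as its first step: the paper's proof is nothing more than the observation that at $r=3$ the triangular Jacobian $J(E_0)$ has $\mu_1=2-r=-1$ while the hypothesis $0<\beta<\tfrac12(3K+2aK)=\beta_0\big|_{r=3}$ forces $|\mu_2|=|1-K+\tfrac{2\beta}{3+2a}|<1$, which by the paper's Definition~\ref{def:codim1} is what ``flip bifurcation'' means here. Where you go further --- verifying the nondegeneracy conditions \eqref{generic flip conditions 1}--\eqref{generic flip conditions 2} --- you are really proving the \emph{next} theorem in the paper, and there your route is genuinely different and arguably cleaner: you exploit that $\{I=0\}$ is exactly invariant, so the center manifold computation collapses to the logistic map $S\mapsto rS(1-S)$, giving $c=\tfrac14(-2r)^2+\tfrac16\cdot 0=9$ at $r=3$ in one line, whereas the paper computes the same $c=9$ via the two-dimensional invariant formula \eqref{invariant formula} with $B$, $C$ and $(A-I_2)^{-1}$ in Appendix~\ref{compute c}. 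One small correction to your alternative route: the normalized adjoint eigenvector is \emph{not} $p=(1,0)^T$; since $A$ is upper triangular but not diagonal, one finds $p=(1,p_2)^T$ with $p_2=\tfrac{2\beta}{2\beta+(2a+3)(2-K)}\neq0$. This happens not to matter because $C(q,q,q)$ and $B(q,(A-I_2)^{-1}B(q,q))$ both have vanishing second component, but as written that parenthetical is wrong; your primary (one-dimensional) argument does not need $p$ at all and is unaffected.
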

\begin{proof}At $E_0$, for $r=3$ the Jacobian matrix is
$$
A = J(E_0) =
\begin{pmatrix}
2-r & -\frac{\beta(r-1)}{r+a(r-1)}\\
0 & 1-K+\frac{\beta(r-1)}{r+a(r-1)}
\end{pmatrix}
=
\begin{pmatrix}
-1 & -\frac{2\beta}{3+2a}\\
0 & 1-K+\frac{2\beta}{3+2a}.
\end{pmatrix}
$$
The eigenvalues of $A$ are $\mu_1 = -1$ and $\mu_2 = 1-K+\frac{2\beta}{3+2a}$. Now, $|\mu_2|<1$ if and only 
if $0<\beta < \frac{1}{2}(3K+2aK) = \beta_0\left|_{r=3}\right.$.
\end{proof}

This means the dynamical system undergoes a flip bifurcation, which is a periodic doubling bifurcation, resulting a 2-periodic orbit.  Next we investigate the stability of this \(2\)-periodic orbit.  The answer can be found if we can check the conditions in Theorem~\ref{generic flip}. 

\begin{theorem} The flip bifurcation found in preceding proposition is generic and the resulting \(2-\)periodic orbit is stable for \(3<r<1+\sqrt{6}\).
\end{theorem}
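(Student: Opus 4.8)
The plan is to separate the statement into its two parts: first, that the flip bifurcation at $r=3$ is nondegenerate, so that Theorem~\ref{generic flip} (in its two-dimensional incarnation) applies; second, that the $2$-cycle born for $r>3$ stays asymptotically stable on the whole interval $3<r<1+\sqrt6$. The structural fact driving everything is that the line $\{I=0\}$ is forward invariant for \eqref{syst}, on which the map reduces to the logistic map $S\mapsto rS(1-S)$; hence the bifurcating $2$-cycle lies on $\{I=0\}$ and coincides with the period-$2$ orbit of the logistic map, and $\{I=0\}$ is itself a centre manifold of the flip.

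For genericity I would shift $E_0$ to the origin, bring the system to the form \eqref{ax+fx}, and evaluate the critical normal-form coefficient via the invariant formula \eqref{invariant formula}. At $r=3$ the matrix $A=J(E_0)$ of the preceding proposition is upper triangular with simple critical eigenvalue $\mu_1=-1$, critical eigenvector $q=(1,0)^T$, and second eigenvalue $\mu_2=1-K+\tfrac{2\beta}{3+2a}$; the standing hypothesis $\beta<\tfrac12(3K+2aK)$ forces $\mu_2<1$, which makes $-1$ simple and $A-I_2$ invertible, as the formula requires. Since every nonlinear term of \eqref{syst} that contains $I$ carries a factor $I$, the only derivatives of $F$ along $q$ that survive at the origin are those of the scalar logistic map, so I expect $C(q,q,q)=0$ and $B(q,q)=(-2r,0)^T$; as $(A-I_2)^{-1}$ is again upper triangular, the invariant formula should collapse, at $r=3$, to the scalar coefficient $c=\tfrac14\bigl(f_{SS}(E_0)\bigr)^2=\tfrac14\cdot 36=9>0$, independent of $a,K,\beta$ — the value one also gets from the logistic centre-manifold reduction. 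This gives nondegeneracy ($c\neq0$, together with $\mu_2\neq\pm1$) and, via the discussion after Theorem~\ref{generic flip}, a positive cubic term, i.e. a supercritical flip, so a stable $2$-cycle is born for $r$ slightly above $3$. The transversality condition \eqref{generic flip conditions 2} is immediate, since the critical multiplier along $\{I=0\}$ is $\mu_1(r)=2-r$, with $\tfrac{d}{dr}\mu_1(3)=-1\neq0$.

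To upgrade stability from a neighbourhood of $r=3$ to the whole interval I would compute the two multipliers of the $2$-cycle explicitly. Writing the logistic cycle $\{S_1^*,S_2^*\}$ through $S_1^*+S_2^*=\tfrac{r+1}{r}$ and $S_1^*S_2^*=\tfrac{r+1}{r^2}$, and using that $J(S,0)$ is upper triangular — so the second-iterate Jacobian $J(S_2^*,0)J(S_1^*,0)$ is upper triangular too — the eigenvalues are the products of the diagonal entries over $i=1,2$:
\[
\lambda_\parallel=\prod_i\bigl(r-2rS_i^*\bigr)=-(r^2-2r-4),\qquad
\lambda_\perp=\prod_i\Bigl(1-K+\tfrac{\beta S_i^*}{1+aS_i^*}\Bigr)>0 .
\]
Here $\lambda_\parallel$ is the classical logistic multiplier, and $|\lambda_\parallel|<1$ exactly for $3<r<1+\sqrt6$ (it equals $1$ at $r=3$ and $-1$ at $r=1+\sqrt6$). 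For $\lambda_\perp$ I would clear denominators, use $S_1^*S_2^*=\tfrac1r(S_1^*+S_2^*)$, and reduce $\lambda_\perp<1$ to positivity of the explicit quadratic
\[
Q(r)=K(2-K)\bigl[r^2+a(r+1)(r+a)\bigr]-\beta(r+1)\bigl[(1-K)(r+2a)+\beta\bigr].
\]
From $\beta<\tfrac12K(3+2a)$ one checks that the leading coefficient of $Q$ is positive, that $Q(3)\ge0$ with equality precisely when $\beta$ attains that bound, and that $Q'(3)>0$; an upward parabola that is increasing at $r=3$ is positive on $(3,\infty)$, so $0<\lambda_\perp<1$ for all $r>3$. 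Combining the two estimates, the spectral radius of the second-iterate Jacobian is strictly below $1$ exactly on $3<r<1+\sqrt6$, which is the asserted stability.

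The computations behind the two collapsing identities — the normal-form bookkeeping and the symmetric-function manipulations for the logistic cycle — are routine and would go to the Appendix. The real obstacle is the last estimate: showing $\lambda_\perp<1$ uniformly on $(3,1+\sqrt6)$, not merely near $r=3$. This is exactly where the hypothesis $\beta<\tfrac12(3K+2aK)$ of the preceding proposition enters, and the quadratic $Q$ is arranged so that $Q(3)=0$ precisely when $\beta$ hits that threshold (equivalently $\lambda_\perp=\mu_2^2$ there), which is what makes the argument close up.
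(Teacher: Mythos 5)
Your proposal follows essentially the same route as the paper: the genericity half is the identical invariant-formula computation (with \(q=(1,0)^T\), \(C(q,q,q)=0\), \(B(q,q)=(-2r,0)^T\), hence \(c=9>0\)), and the stability half likewise reduces to the two multipliers of the second iterate along the invariant line \(\{I=0\}\), whose ``parallel'' factor \(-(r^2-2r-4)\) produces the window \(3<r<1+\sqrt{6}\). The only difference is one of completeness: the paper merely asserts that the transverse multiplier \(\mu_2\) of \eqref{secondflipeigs} satisfies \(|\mu_2|<1\) ``by similar argument,'' whereas your quadratic \(Q(r)\) --- with positive leading coefficient, \(Q(3)\ge 0\) with equality exactly at the threshold \(\beta=\tfrac12 K(3+2a)\), and \(Q'(3)>0\) --- actually establishes \(0<\lambda_\perp<1\) on the whole interval, filling in a step the paper leaves unverified.
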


\begin{proof} Following the procedure  outlined after Theorem~\ref{generic flip}, we compute an eigenvector $q$ of $A$ associated with $\mu_1=-1$. We have
$$
Aq = -q \iff (A+I_2)q = 0 \iff 
\begin{pmatrix}
0 & -\frac{2\beta}{3+2a}\\
0 & 2-K+\frac{2\beta}{3+2a}
\end{pmatrix}
\begin{pmatrix}
q_1\\
q_2
\end{pmatrix}
= 0.
$$

We may choose $q_1 = 1,q_2=0$ to get the eigenvector $q=(1~0)^T$. Next, we compute an adjoint eigenvector $p$, normalized with respect to $q$, so that $\langle p,q \rangle = 1$. Fortunately, we see that $p$ must take the form $p=(1~p_2)^T$. Then we can find $p_2$ by
$$
A^Tp = -p \iff (A^T+I_2)p = 0 \iff 
\begin{pmatrix}
0 & 0\\
-\frac{2\beta}{3+2a} & 2-K+\frac{2\beta}{3+2a}
\end{pmatrix}
\begin{pmatrix}
1\\
p_2
\end{pmatrix}
= 0.
$$
This implies that 
$$
p_2 = \frac{\frac{2\beta}{3+2a}}{2-K+\frac{2\beta}{3+2a}} =\frac{2 \beta }{2 \beta+(2 a+3) (2-K)}.
$$

Our goal is to compute 
$$
c = \frac{1}{6}\langle p , C(q,q,q) \rangle - \frac{1}{4}\langle p , B(q,(A-I_2)^{-1}B(q,q)) \rangle,
$$
which first requires the computation of $B(x,y)$ and $C(x,y,z)$. As this computation is quite tedious and of no immediate interest, we just move on to state that $c = 9 > 0$ which implies that the flip is generic and the resulting 2-cycle is stable. The interested reader is referred to appendix \ref{compute c} for the details of the computation.

To determine the upper bound for \(r\) we study the map formed by the second iterate, i.e.
\(
\begin{pmatrix}  f(f(S,I),g(S,I))\\ g(f(S,I),g(S,I))\end{pmatrix}
\)
It has two nontrivial fixed points in addition to the fixed point found earlier:
$$
(S,I)=(\tfrac{1+r\pm\sqrt{(r-3)(r+1)}}{2r},0),
$$
By similar argument and computation as for stability analysis for fixed points we can show that they are stable for $3<r<1+\sqrt{6}$.  Hence the 2-periodic orbit is stable for \(3<r<1+\sqrt{6}\).
\end{proof}

Note that we also found that both fixed points yield the eigenvalues of the Jacobeans
\begin{equation}\label{secondflipeigs}
\begin{aligned}
\mu_1 &= 4 - r(r-2)\\
\mu_2 &= \tfrac{(K-1)^2 \left(a^2 (r+1)+a r (r+1)+r^2\right)-\beta (K-1) (r+1) (2 a+r)+\beta ^2 (r+1)}{a^2 (r+1)+a r (r+1)+r^2}.
\end{aligned}
\end{equation}
At $r=1+\sqrt{6}$, we find that $\mu_1=-1$, so there is a flip in both cases. We will now show that the flip is generic and the resulting 4-periodic orbit is stable. We consider only the case with negative square root since the computations for the other one are almost exactly the same.

Again, we look for an eigenvector of the Jacobian matrix $J_2$ of the second iterate at $r=1+\sqrt{6}$ which is quite easy since then, 
$$
A = J_2(\tfrac{1+r\pm\sqrt{(r-3)(r+1)}}{2r},0) = 
\begin{pmatrix}
-1 & a_{12}\\
0 & a_{21}
\end{pmatrix},
$$
where, if we denote by $a^{\pm}_{12}$ the off-diagonal element in the case of positive and negative square roots respectively we have
$$
\begin{aligned}
a^-_{12} &= \tfrac{\beta \left(\left(2 \left(\sqrt{6}+2\right) a+\sqrt{2}+2 \sqrt{3}+3 \sqrt{6}+8\right) K+4 \left(\sqrt{2}+\sqrt{3}\right) a-2 \left(\left(\sqrt{6}+2\right) \beta -3 \sqrt{2}-\sqrt{3}+\sqrt{6}+1\right)\right)}{2 \left(a \left(\left(\sqrt{6}+2\right) a+3 \sqrt{6}+8\right)+2 \sqrt{6}+7\right)}\\
a^+_{12} &= -\tfrac{\beta \left(-2 \left(\sqrt{6}+2\right) a K+4 \left(\sqrt{2}+\sqrt{3}\right) a+2 \left(\left(\sqrt{6}+2\right) \beta +3 \sqrt{2}+\sqrt{3}+\sqrt{6}+1\right)+\left(\sqrt{2}+2 \sqrt{3}-3 \sqrt{6}-8\right) K\right)}{2 \left(a \left(\left(\sqrt{6}+2\right) a+3 \sqrt{6}+8\right)+2 \sqrt{6}+7\right)}\\
a_{21} &= \tfrac{\beta \left(\left(\sqrt{6}+2\right) \beta -\left(2 \left(\sqrt{6}+2\right) a+3 \sqrt{6}+8\right) (K-1)\right)}{a \left(\left(\sqrt{6}+2\right) a+3 \sqrt{6}+8\right)+2 \sqrt{6}+7}+(K-1)^2.
\end{aligned}
$$

We want to determine $q$ so that
$$
(A+I_2)q = 
\begin{pmatrix}
0 & a_{12}\\
0 & 1+a_{21}
\end{pmatrix}
\begin{pmatrix}
q_1\\
q_2
\end{pmatrix}
= 0.
$$
Hence, we may take $q=(1~0)^T$. Since we require $\langle p , q\rangle = 1$, $p$ must take the form $(1~p_2)^T$. Then we can find $p_2$ by considering
$$
(A^T+I_2)p=
\begin{pmatrix}
0 & 0\\
a_{12}&a_{21}
\end{pmatrix}
\begin{pmatrix}
1\\
p_2
\end{pmatrix}
= 0,
$$
which tells us that 
$$
p_2 = -\frac{a_{12}}{1+a_{21}}.
$$

From here following the same procedure as before we can compute $c$. The computations are completely analogous to what has been shown in appendix \ref{compute c} and we find in the case of the negative square root that
$$
c = -10 \left(\sqrt{2}-2\right) \left(2 \sqrt{6}+7\right) \approx 69.7
$$
and for the positive square root
$$
c = 10 \left(\sqrt{2}+2\right) \left(2 \sqrt{6}+7\right) \approx 406.3.
$$
Hence the flip is generic and the resulting orbit is stable in both cases.  

We expect a continuing period-doubling of the system until \(r\) is close 4 since the system in this case behaves like the logistic mapping.   This is confirmed by  numerical simulation, shown in Figure~\ref{bifurcation E0r}.  
\begin{figure}[!h]
\centering
\includegraphics[width=.9\linewidth]{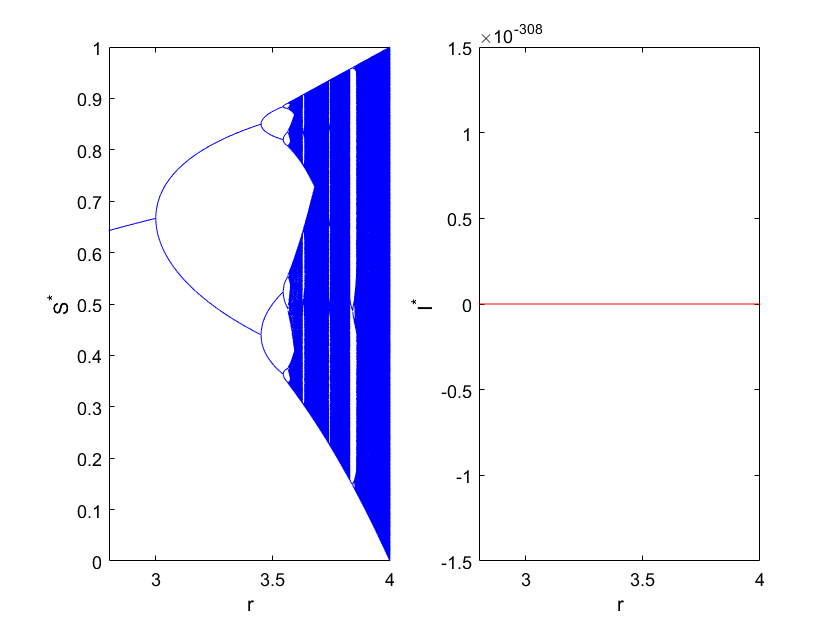}
\caption{Bifurcation diagram with parameter $r$ where $\beta = 1.1 <\beta_0$, \(a=1\) \(K=1/2\).}
\label{bifurcation E0r}
\end{figure}

In fact we can argue that it is true based on Proposition~\ref{p3-p}.  In case \(\beta<(a+1)K(<\beta_0)\) we have 
\(I_n\)  converges to \(0\) as far as \(S_n\) is between \(0\) and \(1\), which is guaranteed by Proposition~\ref{f<1} and \(I_n\) is small.  
Then \(S_n\) behaves like the logistic map if \(I_n\) is close to \(0\).  It remains to argue that this holds too for  \((a+1)K<\beta <\beta_0\).    The main difficulty here is to make sure that \(f(x,y)>0\) since \(f(x,y)<1\) for \(r\le 4\) based on Proposition~\ref{f<1}.  
By Proposition~\ref{p1-p} if we start with \(S_0<\frac{K}{\beta-aK}\) and \(I_0<1\) then \(S_n\) will stay in the interval \((0,1)\)  However if \(\frac{K}{\beta-aK}<S_0<1\) the choice of \(I_0\) is more delicate. Roughly speaking it will work if \(I_0\) is below \(1/\gamma_2<1\).  In a more careful way we can say 
that if \(I_n\to0\)  then \(S_n\) behaves like a logistic mapping.

\subsection{Generic investigation of flip bifurcation from $E_1$}

In a similar manner one can find eigenvectors and compute $c$ for $3<r<r_{max}$ and $\beta = \beta_1$. We denote by $A$ the Jacobian matrix evaluated at $E_1$ when $\beta=\beta_1$. Then 
$$
A = 
\begin{pmatrix}
a_{11} & -K \\
a_{21} & 1
\end{pmatrix}
$$
where
\begin{multline*}
a_{11}=1+\\\tfrac{4 (K (r-1)+4) (4 a+K r)}{-2 (K (r-1)+4) \sqrt{\frac{K^2 \left(4 a^2 (r+1)^2+4 a r (3 K r-5 K-2 r+14)+(K+2)^2 r^2\right)}{(K (r-1)+4)^2}}-(K (r-3)+8) (4 a+K r)+K^2 r^2-5 K^2 r+4 K r},
\end{multline*}
and
\begin{equation*}
a_{21} = \tfrac{2 K (a-(a+1) r)}{\sqrt{\frac{K^2 \left(4 a^2 (r+1)^2+4 a r (3 K r-5 K-2 r+14)+(K+2)^2 r^2\right)}{(K (r-1)+4)^2}}+\tfrac{K (2 a (K (r-1)-r+3)+(K+2) r)}{K (r-1)+4}}+r-1.
\end{equation*}

The first task is to find an eigenvector of $A$ associated with $\mu_1=-1$. Hence, we solve the equation
$$ 
(A+I_2)q = 0
$$
where $q = (q_1 ~ q_2)^T$. This yield
$$
\begin{cases}
(a_{11}+1)q_1-Kq_2=0\\
a_{21}q_1 + 2q_2 = 0
\end{cases}
\iff
\begin{cases}
q_1 = -\frac{2a_{11}}{2+a_{21}K}\\
q_2 = \frac{a_{11}a_{21}}{2+a_{21}K}.
\end{cases}
$$
For convenience we divide both $q_1$ and $q_2$ by $q_1$ to get the eigenvector
$$
q = 
\begin{pmatrix}
1\\
-\frac{a_{21}}{2}
\end{pmatrix}.
$$

Next, we determine the adjoint eigenvector $p = (p_1~p_2)^T$:
$$
(A^T+1)\begin{pmatrix}p_1\\p_2\end{pmatrix} = 
\begin{pmatrix}
(a_{11}+1)p_1 + a_{21}p_2\\
-Kp_1+2p_2
\end{pmatrix}
=
\begin{pmatrix}
0\\
0
\end{pmatrix}.
$$
Together with the constraint that $\langle p , q \rangle = 1$ this yields that
$$
\begin{cases}
(a_{11}+1)p_1 + a_{21}p_2 = 0\\
Kp_1 = 2p_2\\
p_1 - \frac{a_{21}}{2}p_2=1.
\end{cases}
$$

From the second and third equation we get that $p_1 = \frac{4}{4-Ka_{21}}$ and $p_2 = \frac{2K}{4-Ka_{21}}$, and one can check that this fulfils the first equation as well. This gives us the adjoint eigenvector
$$
p = \frac{4}{4-Ka_{21}}
\begin{pmatrix}
1\\
\frac{K}{2}
\end{pmatrix}.
$$
Again, we wish to compute
$$
c = \frac{1}{6}\langle p , C(q,q,q) \rangle - \frac{1}{4}\langle p , B(q,(A-I_2)^{-1}B(q,q)) \rangle,
$$
which first requires the computation of $B(x,y)$ and $C(x,y,z)$. We refer the interested reader to appendix \ref{compute c}. Unfortunately, numerical simulations show that $c$ can take on both positive and negative values depending on $r$.  This is depicted in Figure~\ref{c} where we draw the plane 
at \(c=0\) for a better view of signs.

\begin{figure}[!h]
\centering
\includegraphics[width=.4\linewidth]{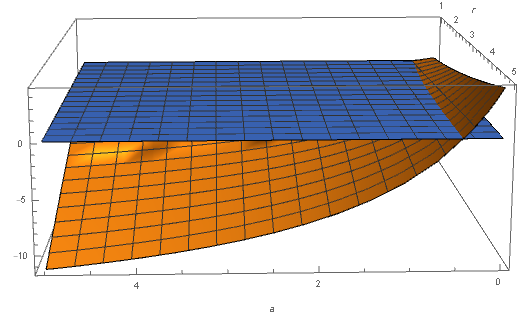}
\caption{Signs of the first Lyapunov coefficient \(c\) for bifurcation from \(E_1\)}
\label{c}
\end{figure}

Figure~\ref{FlipE1-cycles} shows that the flip bifurcation from \(E_1\) results several stable periodic orbits of periods \(2\), \(4\), \(8\) and \(3\) where 
we fix \(a=1\) and \(K=0.5\) and only \(S\)-orbits are plotted  and the simulations show that \(I_n\to 0\). 
\begin{figure}[!h]
\centering
\subfloat{\includegraphics[width=.33\linewidth]{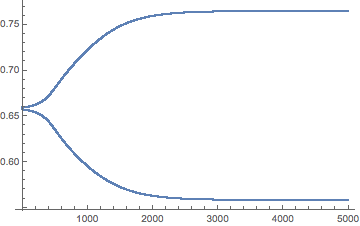}}
\subfloat{\includegraphics[width=.33\linewidth]{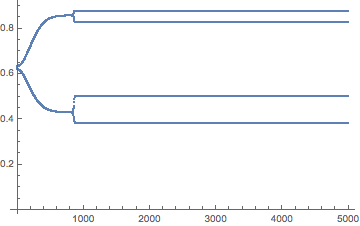}}\\
\subfloat{\includegraphics[width=.33\linewidth]{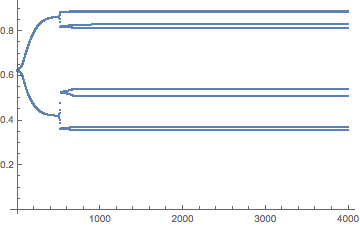}}
\subfloat{\includegraphics[width=.33\linewidth]{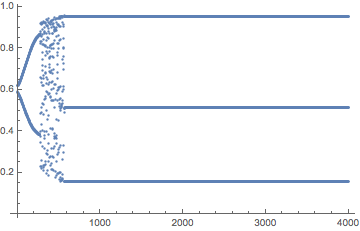}}
\caption{Flip bifurcation of from \(E_1\): (a) \(r=3.1,\beta=1.26\) ;(b) \(r=3.5,\beta=1.3\); (c) \(r=3.55,\beta=1.3\); (d) \(r=3.82843,\beta=1.33\) }
\label{FlipE1-cycles}
\end{figure}

\section{Analysis of Neimark-Sacker bifurcation}

In this section we will investigate the nondegeneracy conditions to see whether the Neimark-Sacker bifurcation is generic.   Note that it occurs at $\beta = \beta_2$.

First, for $\beta=\beta_2$, the Jacobian matrix is
$$
A=
\begin{pmatrix}
a_{11} & -K\\
a_{21} & 1
\end{pmatrix}
$$ 
where
\begin{align*}
a_{11} &= \tfrac{K \left(-r \sqrt{a^2+\frac{2 a (3 K-1) r}{r-1}+\frac{(K+1)^2 r^2}{(r-1)^2}}+\sqrt{a^2+\frac{2 a (3 K-1) r}{r-1}+\frac{(K+1)^2 r^2}{(r-1)^2}}+K r+r\right)+a (K r+K-2)}{2 a (K-1)}\\
a_{21} &= \tfrac{2 K (r+a(r-1))}{\sqrt{a^2+\frac{2 a (3 K-1) r}{r-1}+\frac{(K+1)^2 r^2}{(r-1)^2}}+a (2 K-1)+\frac{(K+1) r}{r-1}}+r-1.
\end{align*}

The characteristic polynomial is
$$
P_A(z) = z^2 - \text{tr}(A)z + \det(A),
$$
and using standard relations between coefficients and zeros of a degree two polynomial we get that
\begin{equation}\label{coeff/roots}
\begin{cases}
\mu_1 + \mu_2 = a_{11} + 1\\
\mu_1\mu_2 =\det(A) = a_{11}+a_{21}K = 1.
\end{cases}
\end{equation}
We have used that the zeros sum to negative the coefficient of $z$, and that the product is equal to the constant term. It is a simple but tedious matter to check that $\det(A) = 1$. Knowing that one eigenvalue lies on the unit circle, we immediately get that the other one must do so as well, for otherwise their product could not be 1. This also excludes the case $\mu_{1,2}=\pm 1$ so we must have complex conjugate eigenvalues
$$
\mu_{1,2} = e^{\pm i\theta_0} = \sigma \pm i\omega.
$$
From \eqref{coeff/roots} it is clear that $\mu_1+\mu_2=2\sigma = a_{11}+1$, and specifically we get
\begin{equation*}\label{real part}
\sigma 
=\tfrac{K \left(\sqrt{a^2+\tfrac{2 a (3 K-1) r}{r-1}+\tfrac{(K+1)^2 r^2}{(r-1)^2}}+K r+r -r \sqrt{a^2+\tfrac{2 a (3 K-1) r}{r-1}+\tfrac{(K+1)^2 r^2}{(r-1)^2}}\right)}{4 a (K-1)}+\tfrac{K r+K-2}{4 (K-1)}+\frac{1}{2}
\end{equation*}

The degenerate cases $e^{ik\theta_0}=1$ for $k = 1,2,3$ or $4$ correspond to $\sigma = 1,-1,-\frac{1}{2},0$, so we may determine for which values of $r$ these nondegeneracy conditions are violated. We will solve the equations for $r$, with the constraint that $1<r\leq r_{max}$.

\textit{Case 1: $\sigma = 1$.} This corresponds to $\theta_0=0$, that is 1:1 resonance. There are however no solutions except $r=0$. This means that there is no 1:1 resonance.

\textit{Case 2: $\sigma = -1.$} Then $\theta_0 = \pi$, so this is 1:2 resonance. We find the solution $r=r_{max}$, which means that when $r=r_{max},\beta=\beta_2(=\beta_1)$, there is a 1:2 resonance.

\textit{Case 3: $\sigma = -\frac{1}{2}$.} This is $\theta_0=\frac{3\pi}{2}$, which means 1:3 resonance. We find a solution
$$
\tilde{r} = \frac{3 a+4 K+3}{2 K}+\frac{1}{2} \sqrt{\frac{9 a^2+48 a K-18 a+16 K^2+24 K+9}{K^2}}.
$$
So, for $\beta=\beta_2,r= \tilde{r}$, there is a 1:3 resonance.

\textit{Case 4: $\sigma = 0$.} Then $\theta_0=\frac{\pi}{2}$, corresponding to 1:4 resonance. Here too, there is a solution
$$
\bar{r} = \frac{2 a+3 K+2}{2 K}+ \frac{1}{2} \sqrt{\frac{4 a^2+20 a K-8 a+9 K^2+12 K+4}{K^2}},
$$
which means that for $\beta=\beta_2,r=\bar{r}$ there is 1:4 resonance.

The expressions for $r_{max},\bar{r}$ and $\tilde{r}$ are quite similar, and in fact one can write
\[
\bar{r} = R(2), \  \tilde{r}=R(3),\  r_{max}=R(4),
\]
where
\begin{equation}\label{R(x)}
R(x) =\frac{a x+K (x+1)+x}{2 K}+ \frac{1}{2} \sqrt{\frac{a^2 x^2+2 a x (K (3 x-1)-x)+(K (x+1)+x)^2}{K^2}},
\end{equation}
which we define for $2\leq x \leq4$. In this interval, the derivative of $R$ is
$$
R'(x) = \frac{2 K (a+K+1)+\frac{2 \left(a K (6 x-1)+(a-1)^2 x+K^2 (x+1)+2 K x+K\right)}{\sqrt{\frac{ (ax-(Kx+K+x))^2+8aKx^2}{K^2}}}}{4 K^2}>0
$$
for $2\leq x \leq 4$, and in fact for all positive $x$, which is clear since every term is strictly positive for $x>0$. So $R(x)$ is monotonically increasing for $2\leq x\leq4$, which implies that we always have 
$$
\bar{r}<\tilde{r}<r_{max}.
$$
We should also check that $d\neq 0$, where $d$ is given by \eqref{NS d}. This is quite involved, and in fact we are not able to solve it analytically. 
However, the graph shown in Figure~\ref{d_LimitCycle} shows that $d<0$ for all parameters.  The computation of $d$ is given in appendix \ref{computing d}.
\begin{figure}[!h]
\centering
\includegraphics[width=.5\linewidth]{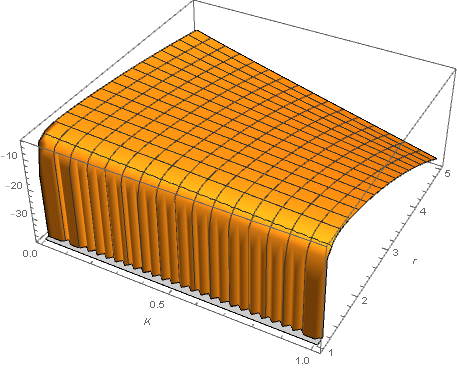}
\caption{Generic investigation of Neimark-Sacker bifurcation}
\label{d_LimitCycle}
\end{figure}
Finally, we check that $\rho'(\beta_2)\neq 0$ where $\rho(\beta) = |\mu_{1,2}(\beta)|$. This is the genericity condition (C.1) given in section 4 in \cite{kuznetsov}. We have that (see appendix \ref{computing d})
$$
\mu_{1,2} = \frac{a_{11}+1\pm i\sqrt{4(a_{11}+Ka_{21})-(a_{11}+1)^2}}{2},
$$
which means that 
\begin{align*}
\rho(\beta) =& |\mu_{1,2}(\beta)| = \frac{1}{2}\sqrt{(a_{11}+1)^2 + 4(a_{11}+Ka_{21})-(a_{11}+1)^2} = \sqrt{a_{11}+Ka_{21}},
\end{align*}
where $a_{11}$ and $a_{21}$ depends on $\beta$. Explicitly
\begin{align*}
a_{11} &= \frac{2 K r}{a K-\beta }+\frac{K (a (r-1)+r)}{\beta }+1\\
a_{21} &= \frac{K (a-(a+1) r)}{\beta }+r-1.
\end{align*}
Hence
\begin{align*}
\rho'(\beta) =& \frac{a_{11}'(\beta )+K a_{21}'(\beta )}{2 \sqrt{a_{11}(\beta )+K a_{21}(\beta )}} = \\
&\frac{-\frac{K^2 (a-(a+1) r)}{\beta ^2}-\frac{K (a (r-1)+r)}{\beta ^2}+\frac{2 K r}{(a K-\beta )^2}}{2 \sqrt{\frac{2 K r}{a K-\beta }+K \left(\frac{K (a-(a+1) r)}{\beta }+r-1\right)+\frac{K (a (r-1)+r)}{\beta }+1}},
\end{align*}
which gives us
$$ 
\rho'(\beta_2) = \frac{(r-1)^2 (2 a (r-1)+(K+1) r)}{2 K \sqrt{2-r} r (a (r-1)+r)}\neq0
$$
for $r>1$.

\section{Bifurcation diagrams and numerical simulations}

To illustrate our results we will in this section provide some numerical simulations and bifurcation diagrams.  
Furthermore we discuss and illustrate existence of period orbit of length \(3\) and possible chaotic behavior supported by computations of the Lyapunov 
exponents.  Fix \(a=1\), \(K=1/2\) we consider two typical \(\beta\)-values: \(\beta =1.1\) and \(\beta=3\) and \(r\) as the bifurcation control parameter.  In Section~3 the bifurcation diagram for \(\beta=1.1\) and below is the bifurcation diagram for \(\beta=3\).   
\begin{figure}[!h]
\centering
\includegraphics[width=.85\linewidth]{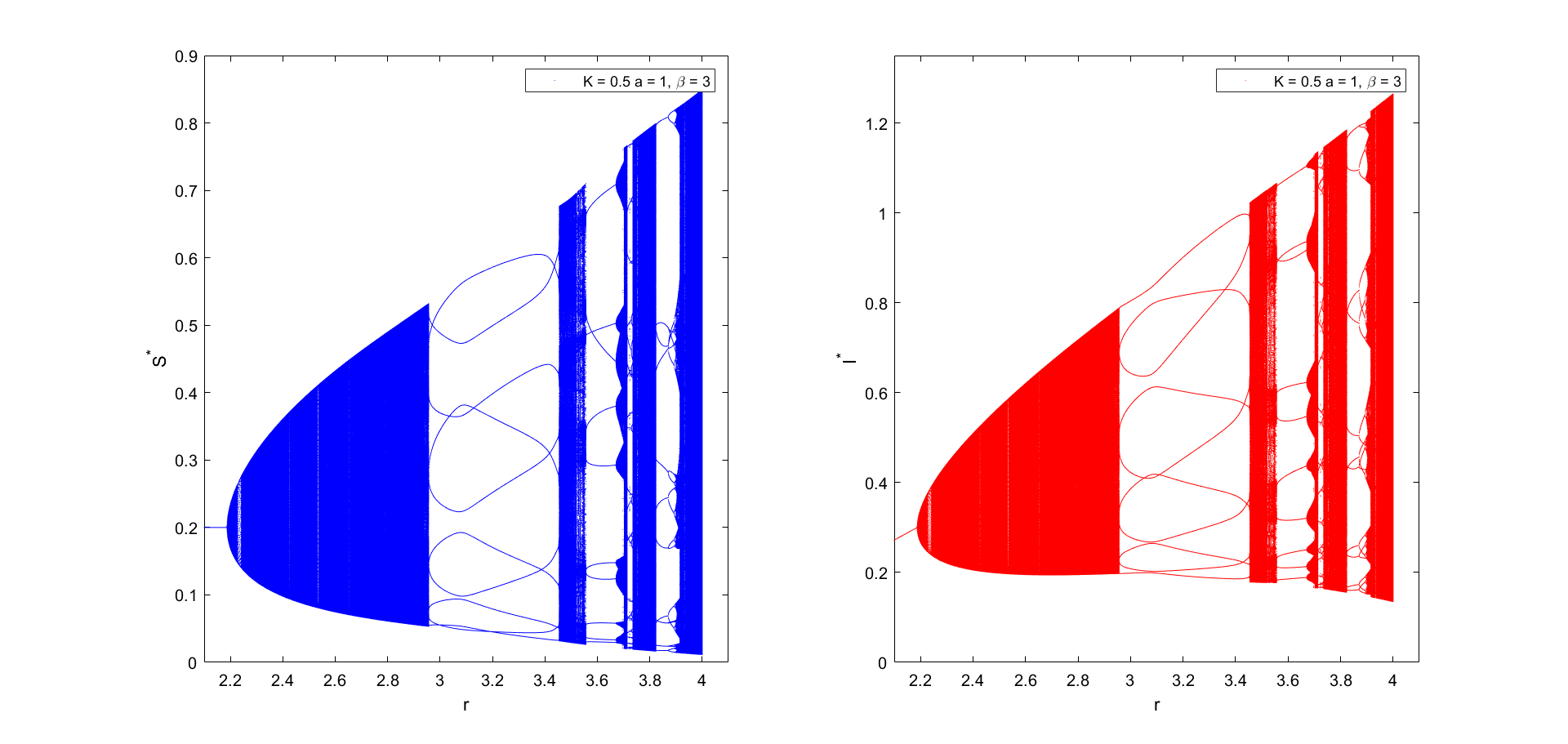}
\caption{Bifurcation diagram with parameter $r$, and $\beta = 3$.}
\label{bifurcation E1r_beta3}
\end{figure}
In this case we have
\begin{itemize}
\item \(1<r<\frac{5}{4}\) corresponding to  \(\beta<\beta_0\): the disease-free fixed point\(E_0\)  is stable;
\item \(\frac{5}{4}<r<\frac{35}{16}\approx2.183\), i.e. \(\beta_1<\beta<\beta_2\): the endemic fixed point \(E_0\)  is stable;
\item  \(r>2.183\) Neimark-Sacker's limit cycles are stable, dipicted in Figure~\ref{r22-cycles} and Figure~\ref{r25-cycles}.
\begin{figure}[!h]
\centering
\subfloat{\includegraphics[width=.33\linewidth]{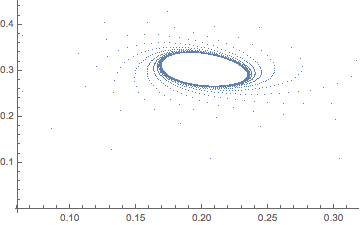}}
\subfloat{\includegraphics[width=.33\linewidth]{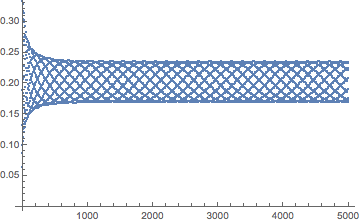}}
\subfloat{\includegraphics[width=.33\linewidth]{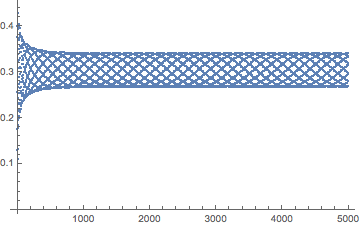}}
\caption{Phase portrait and trajectory of Neimark-Sacker's Limit cycle  $r=2.2$.}
\label{r22-cycles}
\end{figure}

\begin{figure}[!h]
\centering
\subfloat{\includegraphics[width=.33\linewidth]{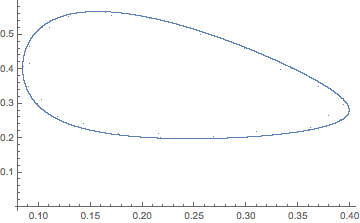}}
\subfloat{\includegraphics[width=.33\linewidth]{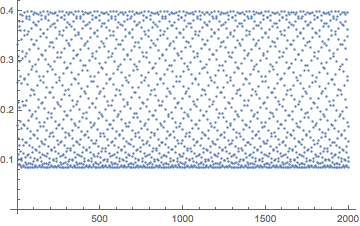}}
\subfloat{\includegraphics[width=.33\linewidth]{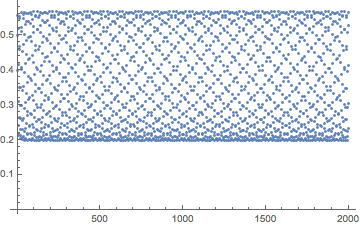}}
\caption{Phase portrait and trajectory of Neimark-Sacker's Limit cycle  $r=2.5$}
\label{r25-cycles}
\end{figure}
\item As \(r\) passes \(3\) the trajectories accumulated on a cycle but with clear pattern of \(n\)-cycles or \(n\)-limit cycles shown in Figure~\ref{r33-cycles} and  Figure~\ref{10_LC}.
\begin{figure}[!h]
\centering
\subfloat{\includegraphics[width=.33\linewidth]{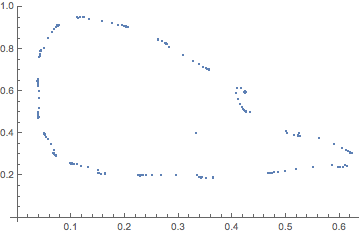}}
\subfloat{\includegraphics[width=.33\linewidth]{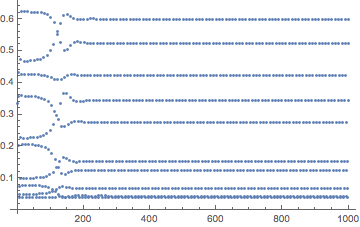}}
\subfloat{\includegraphics[width=.33\linewidth]{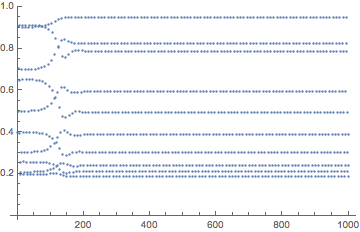}}
\caption{A10-cycle on the invariant set "Neimark-Sacker's Limit cycle"  $r=3.3$.}
\label{r33-cycles}
\end{figure}
\begin{figure}[!h]
\centering
\subfloat{\includegraphics[width=.33\linewidth]{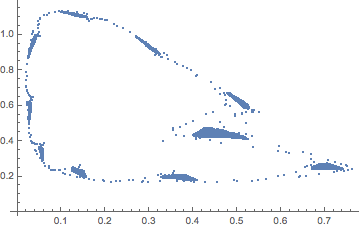}}
\subfloat{\includegraphics[width=.33\linewidth]{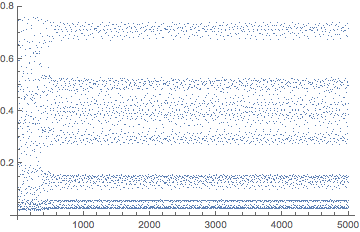}}
\subfloat{\includegraphics[width=.33\linewidth]{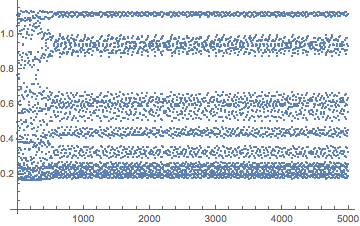}}
\caption{10 small cycles on the invariant set "Neimark-Sacker's Limit cycle"  $r=3.3$.}
\label{10_LC}
\end{figure}
\end{itemize}
This indicates a further bifurcation.  In this case the higher order in approximation should be taken into consideration.  Generically there is only finite number of periodic orbits on the closed invariant curve as (\cite{kuznetsov}).   

Next we show a bifurcation diagram with \(\beta\) as a control parameter for \(r=3.6\), \(a=1\) and \(K=0.5\), shown in Figure \ref{Bif_beta_K05_a1_r36} 
 and zoomed in for larger \(\beta\)-values in Figure\ref{Bif_beta_K05_a1_r36_short}.
 \begin{figure}[!h]
\centering
\includegraphics[width=1\linewidth]{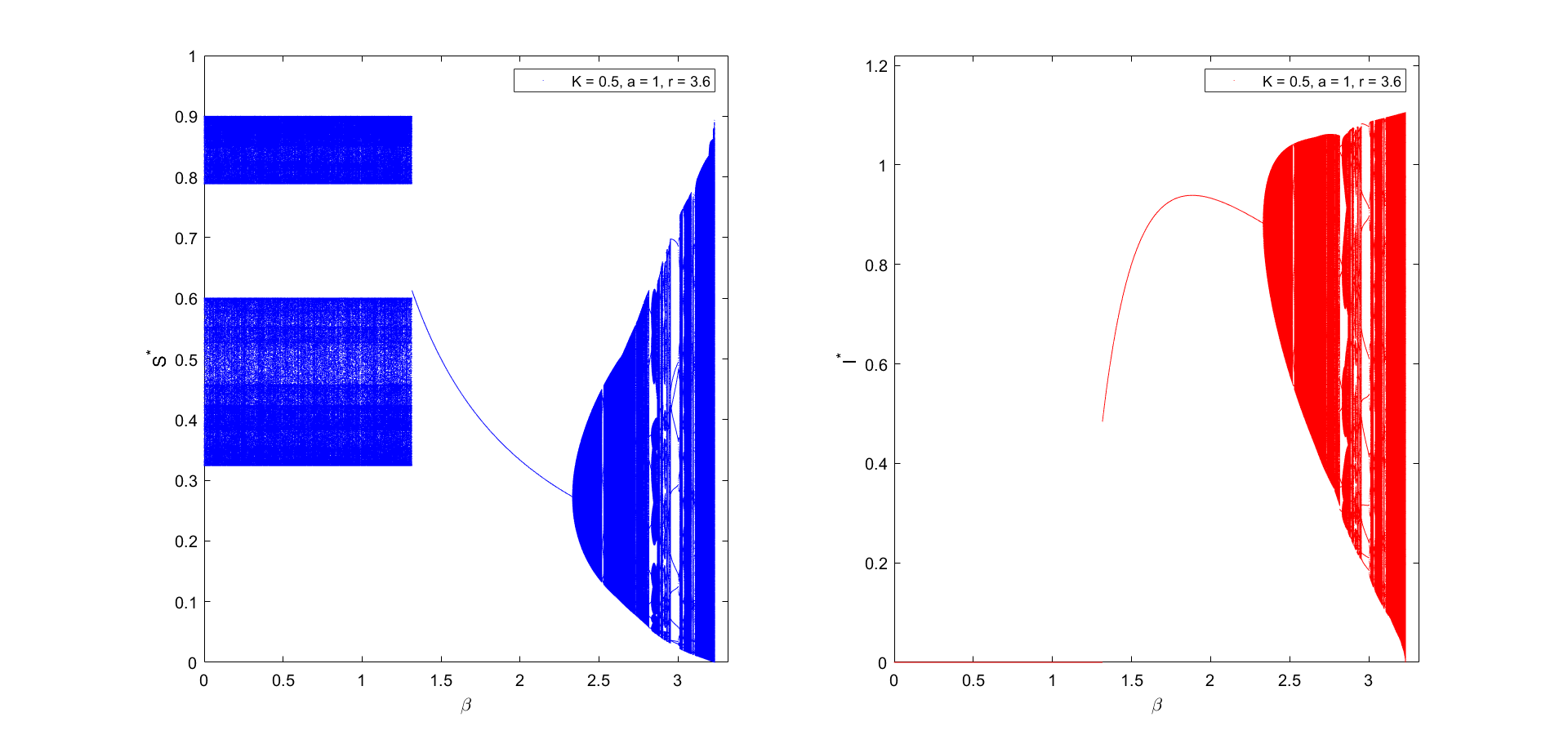}
\caption{Bifurcation diagram with control parameter \(\beta\), where \(r=3.6,K=.5,a=1\)}
\label{Bif_beta_K05_a1_r36}
\end{figure}
\begin{figure}[!h]
\centering
\includegraphics[width=1\linewidth]{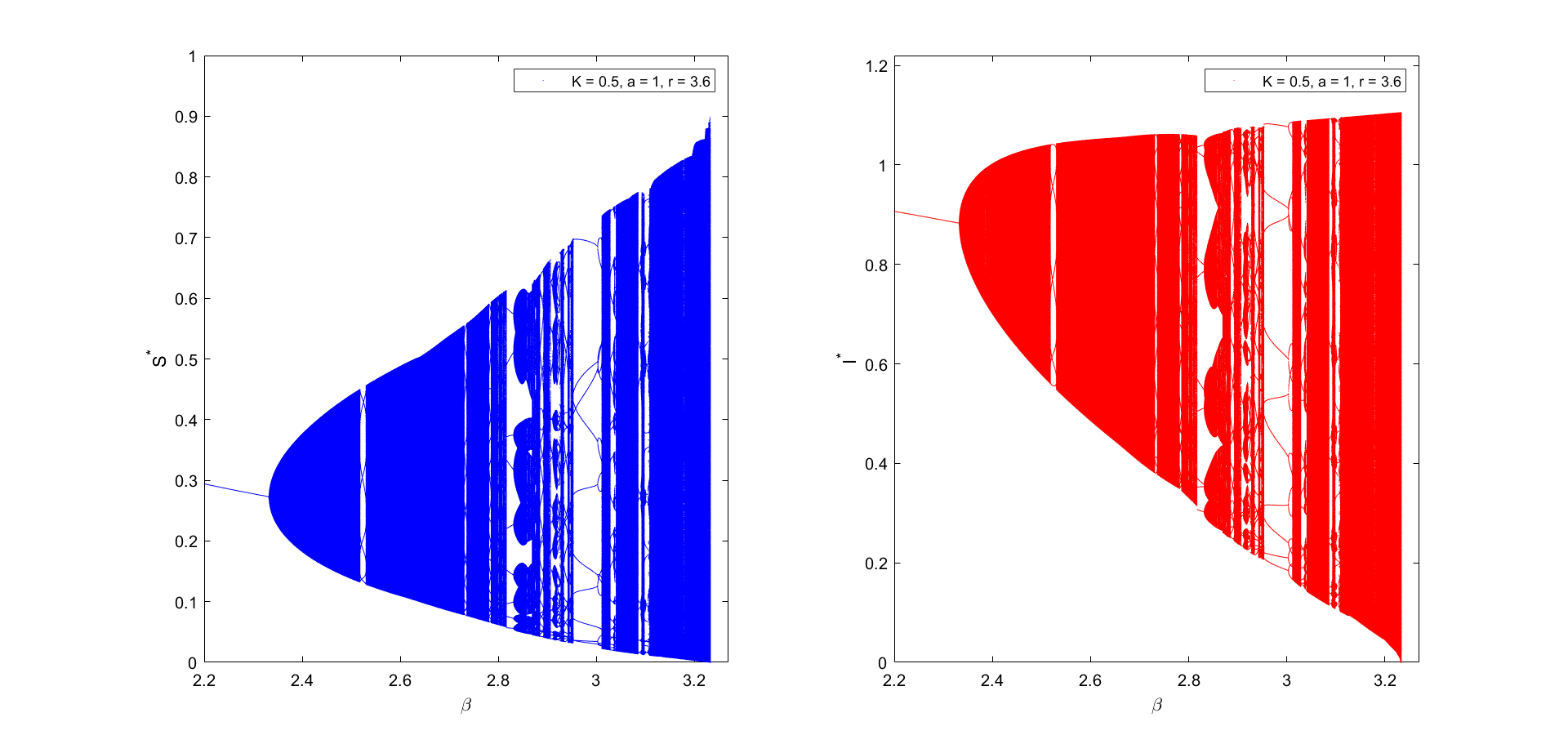}
\caption{Bifurcation diagram with control parameter \(\beta\), where \(r=3.6,K=.5,a=1\)}
\label{Bif_beta_K05_a1_r36_short}
\end{figure}
When \(1.31478<\beta <3.33251\), the trajectories goes to \(E_1\).  Beyond \(\beta_2\) there is Neimark-Shcker's limit cycle and \(n\)-cycles on a closed curve  and eventually chaos.  Note that when \(\beta<1.31478\) there is chaos in for most \(\beta\)-values which will be discussed later.

We close this subsection by showing phase portrait and \(S\)- and \(I\)-orbits for parameters \(\beta=2.33\) corresponding to convergence to \(E_1\), Figure~\ref{beta23r36-E1}, and a phase portrait on a bifurcation from Neimark-Sacker's limit cycle with 10 small cycles on a closed invariant curve, Figure~\ref{LC_LC}.

\begin{figure}[!h]
\centering
\subfloat{\includegraphics[width=.33\linewidth]{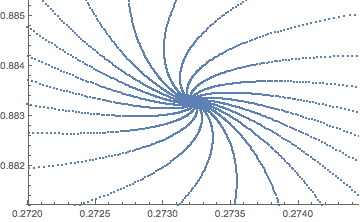}}
\subfloat{\includegraphics[width=.33\linewidth]{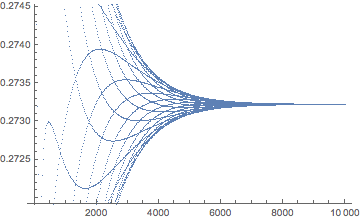}}
\subfloat{\includegraphics[width=.33\linewidth]{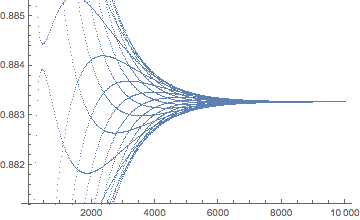}}
\caption{Convergence to \(E_1\)  $\beta=2$.}
\label{beta23r36-E1}
\end{figure}

\begin{figure}[!h]
\centering
\includegraphics[width=0.4\linewidth]{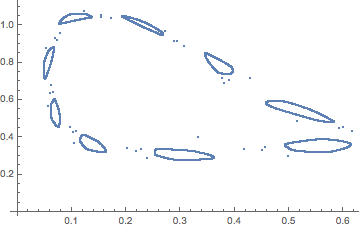}
\caption{A further bifurcation from Neimark-Sacker's limit cycle: \(\beta=2.85\) \(r=3.6,K=.5,a=1\)}
\label{LC_LC}
\end{figure}

\subsection{Existence of three-cylcles}

The next simplest type of orbit is a cycle. In discrete-time systems, a cycle of length $k$ corresponds to a fixed point of the $k$:th iterate $f^k$.  We showed that there is period doubling  when \(3<r<4\) and \(\beta<\beta_0\).  An interesting question to pose is whether one can draw any conclusions about the existence of cycles of other lengths from the presence of a cycle of length $k$. 

In the paper 'Period three implies chaos' \cite{Li}, Li and Yorke were the first to introduce the term \textit{chaos} in mathematics. In the paper, they show that if a continuous map has a cycle of period 3, then it must have cycles of any period $k$. This quite non-intuitive result is in fact a special case of a remarkable theorem of Sharkovskii. To state the theorem, we must first present a new ordering $\triangleright$ of the positive integers as follows:
\begin{align*}
&3\triangleright5\triangleright7\triangleright\dots\triangleright2\cdot3\triangleright2\cdot5\triangleright2\cdot7
\triangleright\dots\triangleright2^2\cdot3\triangleright2^2\cdot5\triangleright2^2\cdot7\dots
\triangleright\dots\\
&\triangleright2^n\cdot3\triangleright2^n\cdot5\triangleright2^n\cdot7\dots
\triangleright
\dots\triangleright2^n\triangleright2^{n-1}\triangleright\dots\triangleright2^2\triangleright2\triangleright1.
\end{align*}
First the odd integers are listed, except 1, then 2 times the odd integers, followed by $2^2$ times the odd integers, and in general $2^n$ times the odd integers for all positive integers $n$. Finally, one lists the powers of 2 in descending order. Clearly all positive integers are generated this way. The notation $m \triangleright n$ means that the positive integer $m$ comes before $n$ in the Sharkovskii ordering. In particular, this means that $3\triangleright k$ for any positive integer $k$.   More precisely 

\begin{theorem}
Let $f:I\to I$ be a continuous map on the interval $I$, where $I$ may be finite, infinite, or the whole real line. If $f$ has a cycle of period $k$, then it has a cycle of period $r$ for all $r$ with $k \triangleright r$. 
\end{theorem}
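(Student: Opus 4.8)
The plan is to derive everything from a few elementary consequences of the intermediate value theorem, organised through the language of covering intervals. For closed subintervals $J,K$ of $I$, write $J\Rightarrow K$ (``$J$ covers $K$'') to mean $f(J)\supseteq K$. I will use three facts, each a short nested-interval argument: (i) if $J\Rightarrow J$, then $f$ has a fixed point in $J$; (ii) if $J\Rightarrow K$, then some closed subinterval $J'\subseteq J$ satisfies $f(J')=K$; and (iii) if $J_0\Rightarrow J_1\Rightarrow\cdots\Rightarrow J_{p-1}\Rightarrow J_0$ is a closed loop of covers, then there is a point $x\in J_0$ with $f^p(x)=x$ and $f^i(x)\in J_i$ for $0\le i<p$ (pull $J_0$ back through the loop repeatedly by (ii), then apply (i) to $f^p$ on the resulting subinterval). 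These reduce the theorem to exhibiting, for each target period $r$, a suitable loop of covers of length $r$.

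The centrepiece is the case in which $f$ has a point of \emph{odd} period greater than $1$. Let $m>1$ be the least odd integer occurring as a period, fix a period-$m$ orbit $P=\{a_1<a_2<\cdots<a_m\}$, and let $I_j=[a_j,a_{j+1}]$ for $1\le j\le m-1$ be the elementary intervals cut out by $P$. The combinatorial heart is \emph{Stefan's lemma}: since $f$ permutes $P$ as a single $m$-cycle and no odd period strictly between $1$ and $m$ occurs, after relabelling the covering digraph on the $I_j$ contains a distinguished interval $A$ with $A\Rightarrow A$, a directed cycle $A\Rightarrow B_1\Rightarrow B_2\Rightarrow\cdots\Rightarrow B_{m-2}\Rightarrow A$ running through all the elementary intervals, and extra ``shortcut'' arrows from the later $B_i$ back to earlier ones producing loops through $A$ of every even length up to $m-1$. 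Proving that this precise configuration must occur is the principal obstacle of the whole argument: it is a careful analysis of how the permutation induced by $f$ displaces the points $a_j$, and it is exactly here that minimality of $m$ enters.

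Granting Stefan's lemma, the periods are read off from loops. For $n\ge m$, traverse the self-loop $A\Rightarrow A$ a total of $n-m+1$ times and then run once around $A\Rightarrow B_1\Rightarrow\cdots\Rightarrow B_{m-2}\Rightarrow A$, obtaining a loop of length $n$; by (iii) this gives a point with $f^n(x)=x$. For even $n$ with $2\le n<m$, a shortcut arrow yields a loop of length $n$ through $A$. In each case one confirms that the period is exactly $n$: since the $I_j$ have pairwise disjoint interiors and the chosen loop is primitive (it visits distinct intervals and is not a repetition of a shorter loop), the only way some proper divisor $d$ of $n$ could satisfy $f^d(x)=x$ is if $x$ lies at an endpoint shared by two elementary intervals, i.e.\ $x\in P$; those finitely many possibilities are excluded by hand, and cannot arise at all when $n$ does not divide $m$. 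The set of periods produced is $\{1\}$ together with all even integers and all integers $\ge m$ ($1$ coming from (i) applied to $A$), which is exactly $\{r:m\triangleright r\}$ together with $m$ itself; so the theorem holds whenever $f$ has an odd period $>1$, and since $k\triangleright r$ forces $m\triangleright r$ for the given odd period $k\ge m$, this already covers every $r$ with $k\triangleright r$.

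It remains to reach the targets forced by a period $k$ that is not odd, say $k=2^{s}q$ with $q$ odd and $s\ge1$. The tool is the elementary correspondence between periodic points of a map $g$ and of $g^{2}$: a point of $g$-period $2\ell$ has $g^{2}$-period $\ell$, and a point of $g^{2}$-period $\ell$ has $g$-period $\ell$ or $2\ell$. If $q>1$, then $f^{2^{s}}$ has a point of odd period $q>1$, so the case just treated applies to $f^{2^{s}}$; its forced cycles are carried back to $f$ by an induction on $s$ (passing from $f^{2^{j+1}}$ to $f^{2^{j}}$), the odd case being invoked once more at each level to pin down the doubled periods. If $q=1$, so that $k=2^{s}$ is a pure power of two, a parallel descent---organised by a reducibility argument on the period-$k$ orbit $O$: either $O$ already carries a Stefan-type cycle for $k$ at the level of $f$, or $O$ is a ``doubling'' of a period-$k/2$ orbit of $f^{2}$, to which one recurses---terminates at a fixed point and produces $f$-cycles of every period $2^{\ell}$ with $0\le\ell\le s$. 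In all cases $\{r:k\triangleright r\}$ is covered. The only remaining technical point is the bookkeeping of $2$-adic valuations in these transfers, in particular the recovery of the small powers $2^{\ell}$ with $\ell\le s$; the value $r=1$ is always immediate by applying (i) to the smallest closed interval containing any periodic orbit.
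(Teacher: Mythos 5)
This statement is Sharkovskii's theorem, which the paper does not prove at all: it is quoted as a classical result (with the Li--Yorke period-three case cited separately), so there is no in-paper argument to compare yours against. Judged on its own terms, your outline is the standard modern proof (covering intervals, itinerary lemma, Stefan cycles, doubling descent), and the architecture is sound: facts (i)--(iii) are correct consequences of the intermediate value theorem, the way you extract periods of every length $\ge m$ and every even length $<m$ from the Stefan digraph is right, the bookkeeping of which $r$ are reached is accurate, and the reduction of even periods $k=2^{s}q$ to the odd case via $f^{2^{s}}$ is the correct strategy.

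The genuine gap is that the two hardest steps are asserted rather than proven. First, Stefan's lemma --- the existence of the self-covering interval $A$, the full cycle $A\Rightarrow B_1\Rightarrow\cdots\Rightarrow B_{m-2}\Rightarrow A$, and the shortcut arrows yielding loops of every even length below $m$ --- is exactly the combinatorial core of the theorem, and you explicitly ``grant'' it. You correctly identify where minimality of the odd period $m$ must enter, but no argument is given for why the orbit points must alternate around a fixed point in the spiral pattern that produces this digraph; without that, nothing downstream is established. Second, the pure power-of-two case $k=2^{s}$ is only gestured at: the dichotomy ``either $O$ carries a Stefan-type cycle for $f$ or $O$ is a doubling of a period-$k/2$ orbit of $f^{2}$'' is itself a nontrivial lemma (usually phrased as: a period-$2^{s}$ orbit with $s\ge 2$ either forces period $2^{s-1}\cdot\text{odd}$ directly or splits into two blocks interchanged by $f$), and the recovery of all intermediate periods $2^{\ell}$, $\ell\le s$, depends on it. As it stands the proposal is a correct and well-organised roadmap, but it defers precisely the parts of the proof that require work.
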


Hence it is of interest whether the system under consideration has a 3-cycle.   To investigate  this phenomenon we search for the parameters based on the fact that  the system \eqref{syst} simply becomes the logistic map when $I = 0$ and the flip bifurcation from \(E_0\) (\(r=3\)) is stable, we could expect that the \(S\)-trajectories preserve properties of the logistic map that has has a 3-cycle, for then our system would also inherit this cycle when $I$ tends to  $0$.

Following \cite{saha},  $r = 1+2\sqrt{2}$. results in a 3-cycle, shown in Figure \ref{3-cycle sim and bif}, the orbit of $(S,I)$ stabilizes after about 30 iterations to a 3-cycle. 

\begin{figure}[!h]
\centering
\subfloat{\includegraphics[width=.5\linewidth]{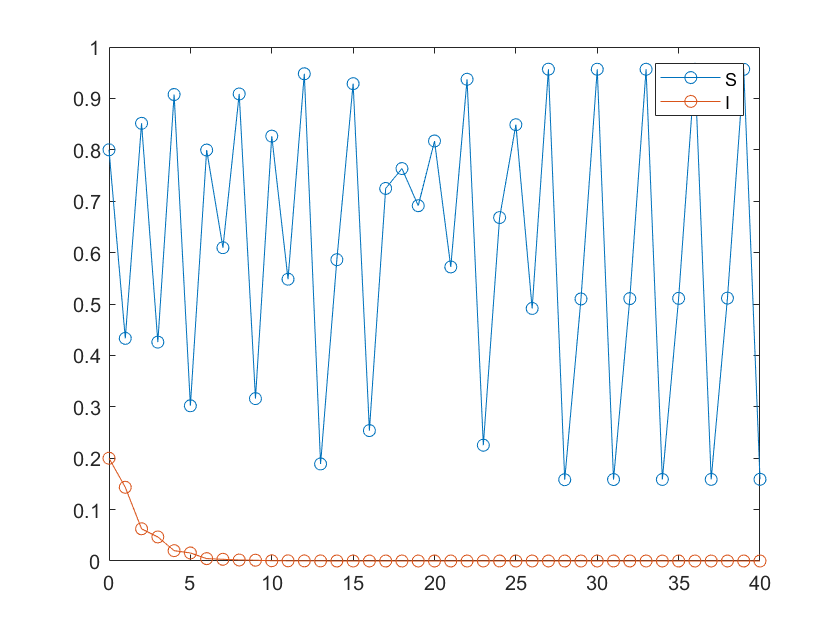}}
\subfloat{\includegraphics[width=.5\linewidth]{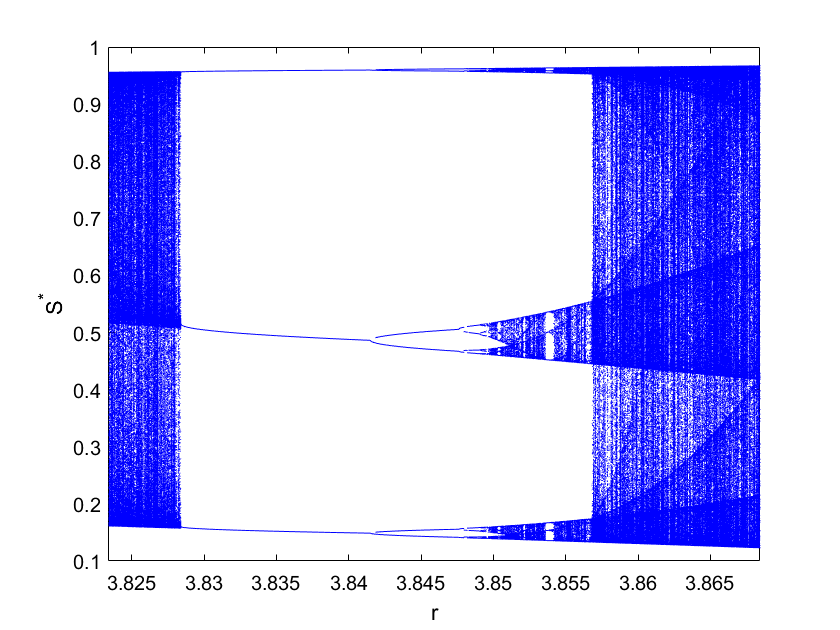}}
\caption{Time series for $r = 1+2\sqrt{2},\beta = 2.09,\mu=0.19,\gamma = 0.99,a=1.08$, and $S_0 = 0.8, I_0 = 0.2$, and part of the bifurcation diagram for these parameters as $r$ varies in a neighbourhood of the critical value.}
\label{3-cycle sim and bif}
\end{figure}

\subsection*{Some $n$-cycles}

Now that we know there is a 3-cycle, Sharkovskii's theorem tells us that there are cycles of arbitrary length. We can solve the system specified in \cite{saha} for $n=5$ numerically which yield three distinct solutions greater than 3, namely $r_1=3.73817,r_2 = 3.90557,r_3=3.99026$. We expect these values of $r$ to yield 5-cycles in the bifurcation diagram when $\beta<\beta_0$, and indeed Figure \ref{all 5-cycles} show all three of them.

\begin{figure}
\centering
\subfloat{\includegraphics[width=.33\linewidth]{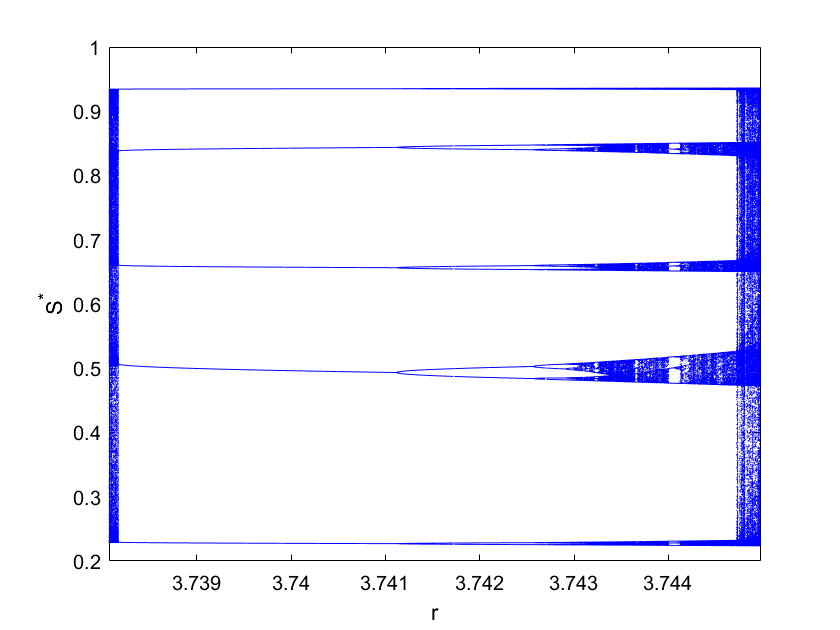}}
\subfloat{\includegraphics[width=.33\linewidth]{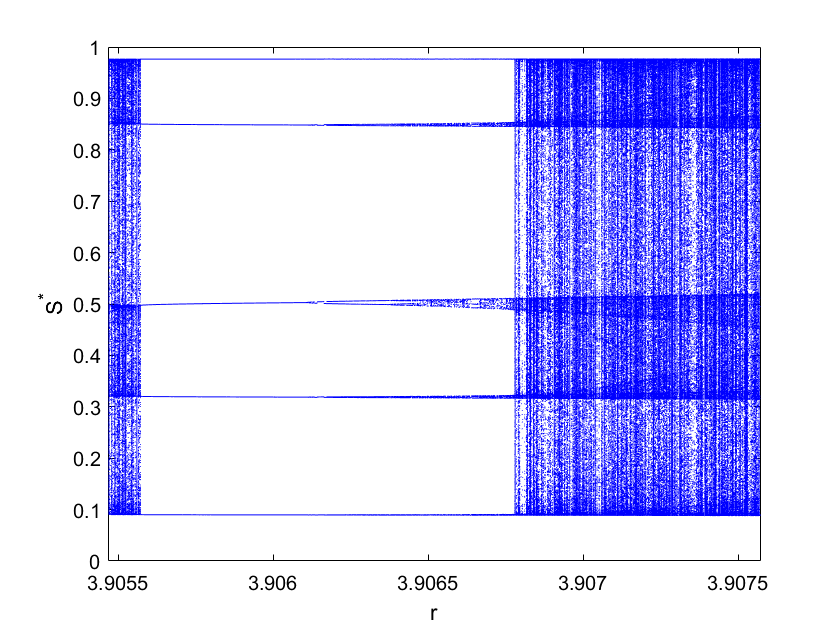}}
\subfloat{\includegraphics[width=.33\linewidth]{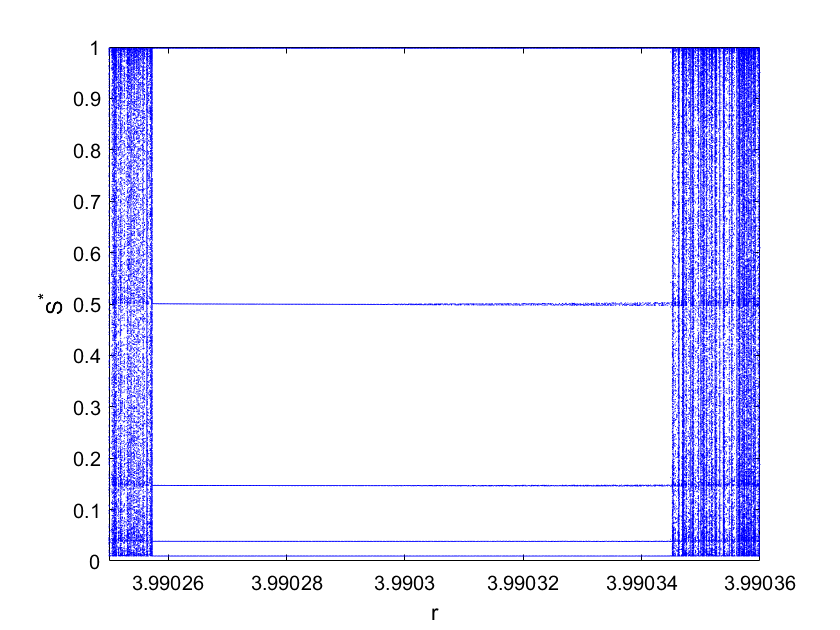}}
\caption{All the 5-cycles found in the bifurcation diagram as $r$ varies, and with $\beta<\beta_0$.}
\label{all 5-cycles}
\end{figure}

For $n=6$ we can solve the system of equations numerically, and find eight values of $r$ that are greater than 3, namely 
$$
r=3.21486, 3.63386,3.83185,3.83265,3.85556,3.93769,3.97781,3.99759,
$$
and with patience one can numerically find all nine solutions greater than 3 when $n=7$. For completeness these are
$$
r = 3.71955,3.78707,3.88935,3.92373,3.95204,3.96955,3.98497,3.99461,3.99941.
$$

For larger $n$ it is no longer practical to solve the system of equations. We can however by simply looking at the bifurcation diagram find some more cycles. As an example, Figure \ref{n-cycles} show a 7-cycle, a 10-cycle and an 18-cycle.

\begin{figure}[!]
\centering
\subfloat{\includegraphics[width=.33\linewidth]{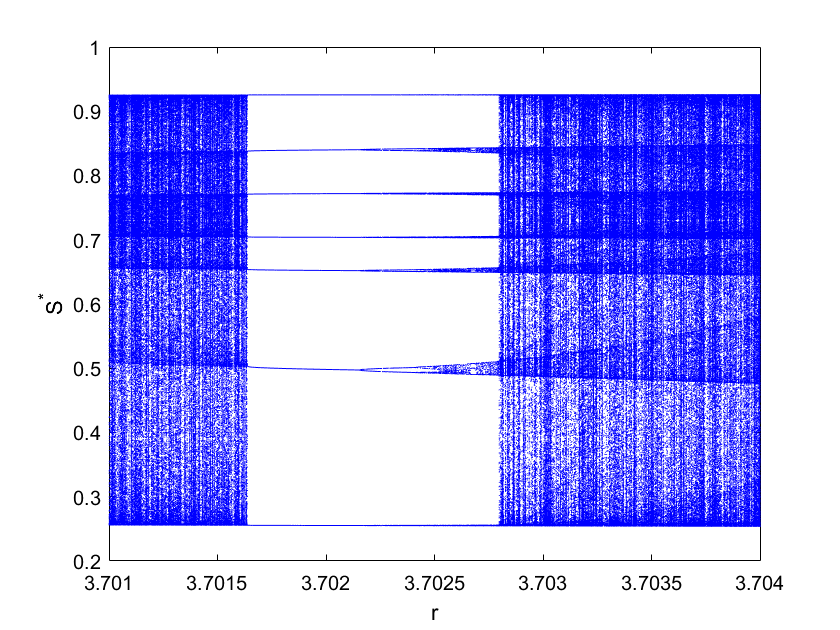}}
\subfloat{\includegraphics[width=.33\linewidth]{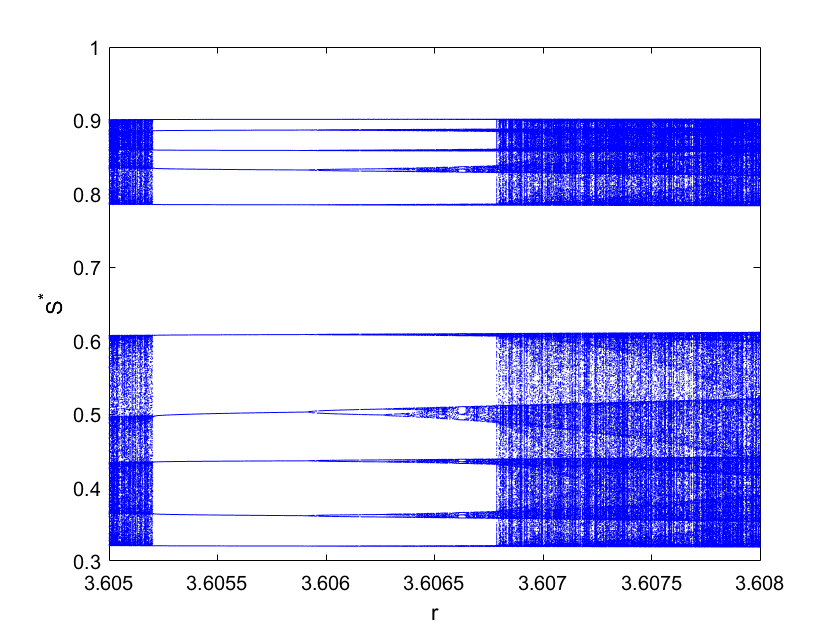}}
\subfloat{\includegraphics[width=.33\linewidth]{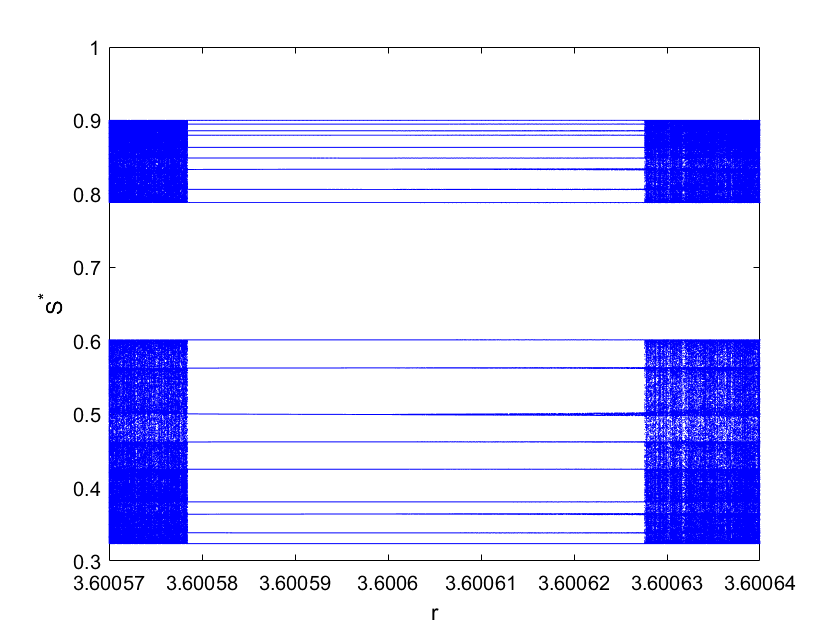}}
\caption{Cycles of length 7, 10 and 18, when $\beta<\beta_0$.}
\label{n-cycles}
\end{figure}

\subsection{Chaotic behaviour and Lyapunov exponents}

In Section~4 we argued that our model behaves like the logistic mapping if \(\beta<\beta_0\) and other \(n\)-cycles are also shown in the previous subsection.  
Now we argue that Period \(3\) indeed leads to chaos.  To this end we compute the Lyapunov exponents \cite{LE} for \(3.5<r<4\), see Figure~
\ref{LE} (left).  Clearly we have the smallest Lyapunov exponent greater than \(0\) in this range of \(r\) and it agrees to the stable \(n\)-cycles we found 
previously when  the largest Lyapunov exponent is less than \(0\)

Now we turn the second bifurcation diagram for \(a=1\), \(K=0.5\) and \(\beta=3\).  We plot the Lyapunov exponents in  Figure~\ref{LE} (right) for \(3.45<r<4.18\).  Again it agrees to the discussion above for the limit cycles and other periodic orbits.    And similarly  a Lyapunov exponents for the diagram in Figure~\ref{Bif_beta_K05_a1_r36_short} is provided.

\begin{figure}[!h]
\centering
\includegraphics[width=1\linewidth]{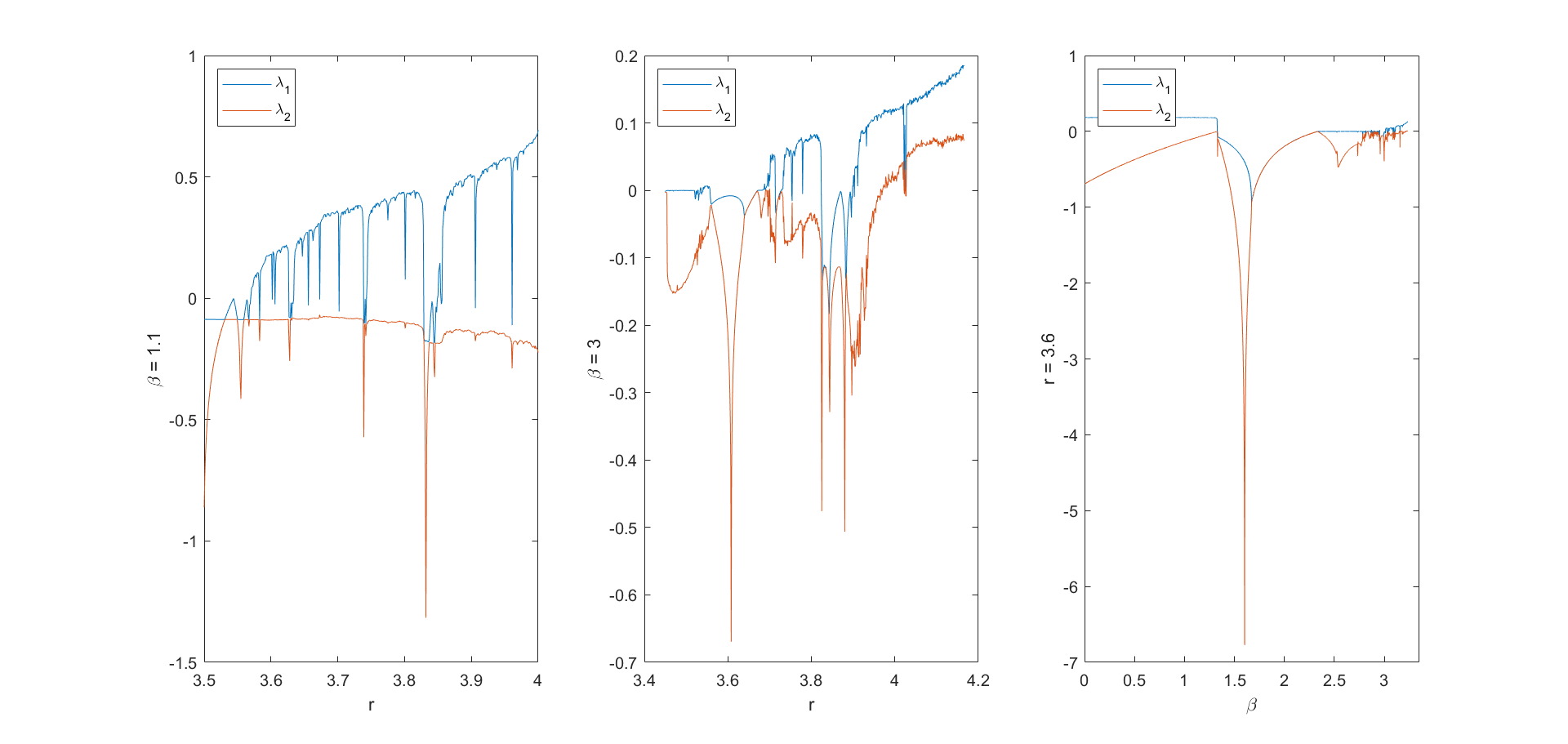}
\caption{Lyapunov exponents: \(\beta=1.1\) (left), \(\beta=3\) (middle),\(r=3.6\)}
\label{LE}
\end{figure}

Figure~\ref{chaos}  shows a phase plot and the trajectory \(S_n\) where  \(S_0\) is \(1/3\) and its round off \(0.3333\).  
\begin{figure}[!h]
\centering
\subfloat{\includegraphics[width=.33\linewidth]{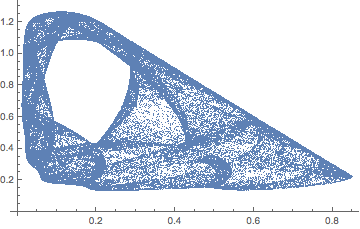}}
\subfloat{\includegraphics[width=.33\linewidth]{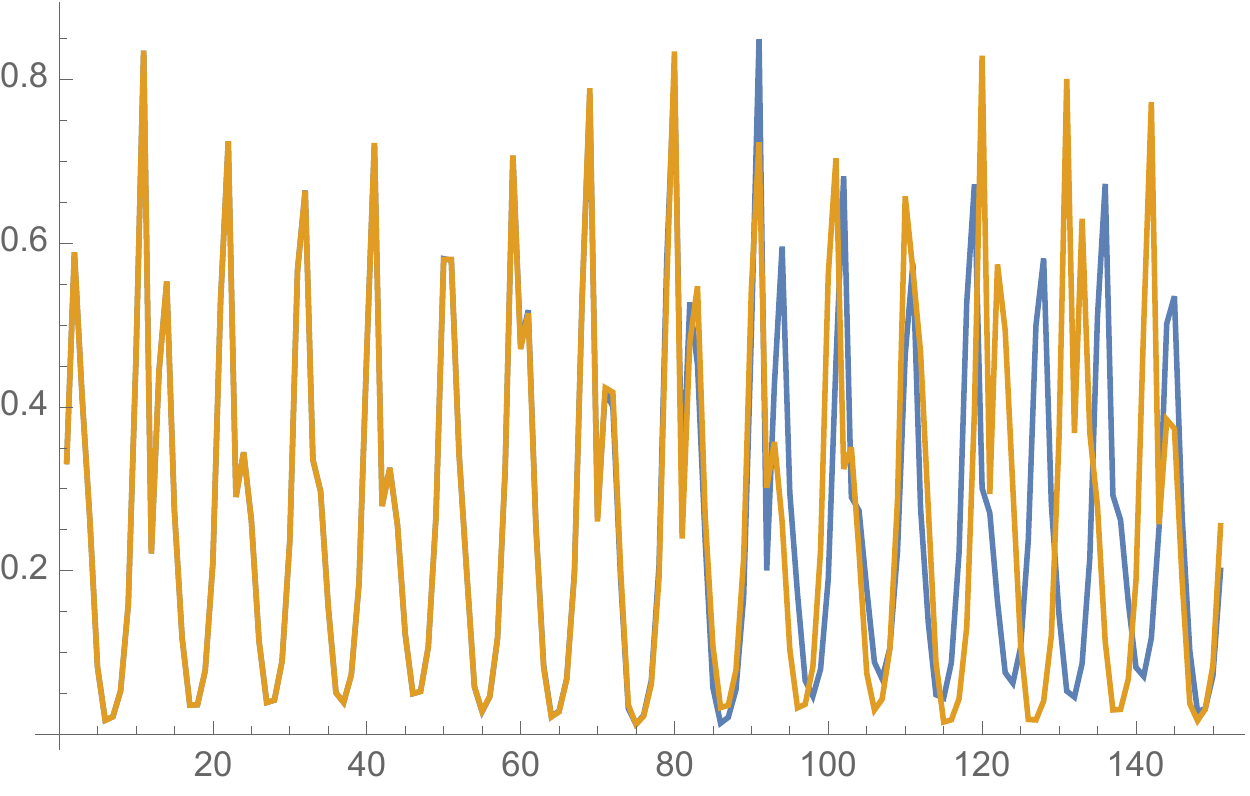}}
\caption{Phase portrait on \(SI\)-plane and \(S\)-trajectory starting at \(1/3\) and \(0.3333\), respectively.}
\label{chaos}
\end{figure}
Clearly the trajectories are very different after some steps even though the round off is not very rough.  
Note also that the phase plot shows different density of the points and the shape of it is much like the cycle case for \(r=3.3\) which may be compared with the situation in case \(\beta<\beta_0\), that is, stable  \(n\)-cycles in chaos.  We think that \(r\) could get larger than \(4\) in case \(\beta=3\) is due to the fact that  a positive term is substracted from the logistic mapping in the \(S\)-iteration.  Beyond that almost all initial values will result an unbounded trajectory except that initial values are extremely close to a stable fixed point.

Finally in the last bifurcation diagram it shows that the system exhibits chaotic behavior starting with \(\beta\) approximately \(3\), which can be inspected by the Lyapunov exponents Figure~\ref{LE}.  

\section{Discussions and further remarks}

In this section we will discuss several issues from epidemiological point of view and point out issues remained unanswered.

\subsection{Basic reproduction number and disease control}

A central number in epidemiology is the so-called {\em basic reproduction number}, denoted $\mathcal{R}_0$. 
It is defined as the expected number of secondary cases produced by a single (typical) infection in a completely susceptible population. It is important 
to note that \(\mathcal{R}_0\) is a dimensionless number and not a rate, which would have units of time \cite{JonesR0}.
The use of this quantity is not without complications, but as a rule of 
thumb, one says that if $\mathcal{R}_0<1$ the infection dies out in the long run, and if $\mathcal{R}_0>1$ the infection will spread in the population and will require intervention to eradicate.  Nevertheless we can turn the question around to ask under what conditions the disease will die out then the threshold is the number
\begin{equation}\label{R_0 cand1}
	\frac{\beta(r-1)}{K(r+a(r-1))}=\dfrac{\beta}{K\left(a+1+\frac{1}{r-1}\right)}
\end{equation}
This number being less than \(1\) is equivalent to $\beta < \beta_0$, which implies that $E_0$ is (locally) stable and thus $I_n\to0$ as $n \to\infty$, at least when $r\le3$. 
The bifurcation diagram in Fig. \ref{bifurcation E0r} suggests that this remains the case for $3\leq r\leq 4$. 
From this number we can see that given \(\beta\) we have three possibilities to make this number smaller; making \(r\) smaller (in other words, to control the population growth and supplies) or making \(K\) large (which is to treat the infectives efficiently so that they are removed) or make \(a\) large (the measure imposed on the community strong enough).  In this sense we provide an even simpler number that guarantees (\ref{R_0 cand1})
\begin{equation}\label{R_0 cand2}
\frac{\beta}{(a+1)K}.
\end{equation}

We show that if this number is less than 1,  $I_n\to0$ for all biologically permitted initial values as long as $r\leq4$ (by lemma \ref{f<1}). To this end let \(h_1(x,y):= r(1-x)-\frac{\beta y}{1+ax}\), \(h_2(x):=\frac{\beta x}{1+ax}+1-K\).  Then $f(S,I):=rS(1-S)-\frac{\beta SI}{1+aS}=S h_1(S,I)$ and $g(S,I) :=(1-K)I+\frac{\beta SI}{1+aS}I h_2(S)$.
Since \(h_2'(x)=\frac{\beta}{(1+ax)^2}>0\), \(h_2\)  is strictly increasing for all real \(x\). 

\begin{proposition}  \label{p1-p}
The following bounds on $h_2(x)$ hold:
\begin{enumerate}
\item If \(\beta<K(1+a)\), then \(0<h_2(x)<1\) for all \(0<x<1\).
\item  If \(\beta\ge K(1+a)\), then \(0<h_2(x)<1\) for all \(0<x<\frac{K}{\beta-aK}\); and \(1<h_2(x)<\frac{\beta}{1+a}+1-K\) for \(\frac{K}{\beta-aK}\le x< 1\).

\end{enumerate}
\end{proposition}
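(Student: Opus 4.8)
The whole statement is a direct consequence of the monotonicity of $h_2$ already recorded just before the proposition, namely $h_2'(x)=\frac{\beta}{(1+ax)^2}>0$, combined with the values of $h_2$ at the endpoints $x=0$, $x=1$ and at the level set $\{h_2=1\}$. So the plan is: evaluate $h_2$ at these three places, then let strict monotonicity do the rest by a short case split.

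First I would note $h_2(0)=1-K$, which is positive since $0<K<1$. Because $h_2$ is strictly increasing, this already gives $h_2(x)>h_2(0)=1-K>0$ for every $x>0$, so the lower bound $0<h_2(x)$ holds on all of $(0,1)$ in both cases and requires no further discussion. Next I would compute $h_2(1)=\frac{\beta}{1+a}+1-K$, and solve $h_2(x)=1$: this reduces to $\beta x=K(1+ax)$, i.e.\ $x(\beta-aK)=K$, whose unique root (when $\beta\neq aK$) is $x^\star=\frac{K}{\beta-aK}$.

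For part (1), the hypothesis $\beta<K(1+a)$ gives $\frac{\beta}{1+a}<K$, hence $h_2(1)<1$; monotonicity then forces $h_2(x)<h_2(1)<1$ for all $0<x<1$, which together with the lower bound proves the claim. For part (2), the hypothesis $\beta\ge K(1+a)$ yields $\beta-aK\ge K>0$, so $x^\star$ is well defined and positive, and the same inequality rearranges to $x^\star=\frac{K}{\beta-aK}\le 1$. Splitting $(0,1)$ at $x^\star$: on $(0,x^\star)$ monotonicity gives $0<1-K<h_2(x)<h_2(x^\star)=1$, while on $[x^\star,1)$ it gives $h_2(x)\ge h_2(x^\star)=1$ and $h_2(x)<h_2(1)=\frac{\beta}{1+a}+1-K$, which is exactly the stated conclusion.

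There is essentially no hard step; the only points needing a line of verification are $\beta-aK>0$ and $x^\star\le 1$ in case (2), and both are immediate from $\beta\ge K(1+a)$ and $K>0$. (At the single value $\beta=K(1+a)$ one has $x^\star=1$, so the second subinterval in part (2) is empty and the minor strict-versus-weak discrepancy at $x=x^\star$ is immaterial.)
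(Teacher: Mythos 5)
Your proposal is correct and follows essentially the same route as the paper: evaluate $h_2$ at $0$, at $1$, and at the root $x^\star=\frac{K}{\beta-aK}$ of $h_2(x)=1$, then invoke the strict monotonicity of $h_2$ recorded just before the proposition. Your write-up is in fact slightly more careful than the paper's (you verify the positive lower bound via $h_2(0)=1-K>0$ and check $\beta-aK>0$ and $x^\star\le 1$ explicitly), but there is no substantive difference in method.
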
 
\begin{proof}  By strictly increasing property of \(h_2\), we have, if \(\beta<K(1+a)\)
\[
h_2(0)<h_2(x)<h_2(1)=\frac{\beta}{1+a}+1-K<1.
\]
proving the first assertion.  Note that \(\beta>K(1+a)\) is equivalent to \(\frac{K}{\beta-aK}<1\).  Then \(h_2\) is strictly increasing implies that 
\[
h_2(x)< h_2\left(\frac{K}{\beta-aK}\right)=1, 
\]
if  \(0<x<\frac{K}{\beta-aK}\).  Similarly
\[
1=h_2\left(\frac{K}{\beta-aK}\right)<h_2(1)=\frac{\beta}{1+a}+1-K
\]
if  \(\frac{K}{\beta-aK}<x<1\).
\end{proof}

Consequently,  if \(\beta<K(1+a)\), \(g(x,y)<y\) for all \(0<x<1\); and \(\beta\ge K(1+a)\),  \(g(x,y)<y\) for all \(0<x<\frac{K}{\beta-aK}\). 

\begin{proposition}   \label{p3-p}
If \(\beta<(1+a)K\) then \(0<I_{n+1}<I_n<I_0\).   Furthermore, 
\[
\gamma_1^n I_0\le I_n\le\gamma_2^n I_0,  \ \text{ for all \(n\ge1\)}
\]
where \(\gamma_1:=1-K\) and \(\gamma_2:=\frac{\beta}{1+a}+1-K\).
\end{proposition}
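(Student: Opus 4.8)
The plan is to exploit the multiplicative form of the second component of \eqref{syst}: since $I_{n+1}=g(S_n,I_n)=(1-K)I_n+\tfrac{\beta S_nI_n}{1+aS_n}=I_n\,h_2(S_n)$ with $h_2(x)=\tfrac{\beta x}{1+ax}+1-K$, one gets $I_n=I_0\prod_{k=0}^{n-1}h_2(S_k)$, so the whole statement reduces to pinning each factor $h_2(S_k)$ between a constant $>0$ and a constant $<1$ along the orbit.

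First I would record the behaviour of $h_2$ on $[0,1]$. As already noted, $h_2'(x)=\beta/(1+ax)^2>0$, so $h_2$ is strictly increasing, with $h_2(0)=1-K=\gamma_1$ and $h_2(1)=\tfrac{\beta}{1+a}+1-K=\gamma_2$; moreover $\gamma_1>0$ because $K<1$, and the hypothesis $\beta<(1+a)K$ is precisely $\tfrac{\beta}{1+a}<K$, i.e.\ $\gamma_2<1$. Hence as soon as $0\le S_k\le1$ for every $k$, monotonicity gives $0<\gamma_1\le h_2(S_k)\le\gamma_2<1$; this is exactly Proposition~\ref{p1-p}(1) supplemented by its endpoint values.

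Next I would secure $0\le S_n\le1$ for all $n$. The upper bound $S_n<1$ for $n\ge1$ is immediate from Lemma~\ref{f<1}, since we are in the regime $r\le4$; for $n=0$ it is part of admissibility of the initial data. The lower bound $S_n\ge0$ is forward positivity of the $S$-component: writing $S_{n+1}=S_n\,h_1(S_n,I_n)$ with $h_1(x,y)=r(1-x)-\tfrac{\beta y}{1+ax}$, this means the orbit stays under the nullcline $y=\tfrac{r}{\beta}(1-x)(1+ax)$, which for admissible data is guaranteed by confining the orbit to one of the invariant sets of Proposition~\ref{Omega positive} (note $\beta<(1+a)K$ forces $\beta<\beta_0$, so we sit on the disease-free branch).

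With $0<\gamma_1\le h_2(S_n)\le\gamma_2<1$ in hand the two assertions are routine. From $I_{n+1}=I_n h_2(S_n)$ and $0<h_2(S_n)<1$ we read off $0<I_{n+1}<I_n$, hence $I_n<I_{n-1}<\dots<I_0$. For the geometric bounds, $\gamma_1 I_n\le I_{n+1}\le\gamma_2 I_n$, and a one-line induction starting from $I_1=I_0h_2(S_0)$ gives $\gamma_1^{\,n}I_0\le I_n\le\gamma_2^{\,n}I_0$ for $n\ge1$ (in particular $I_n\to0$ geometrically, which is what is used afterwards). The only step that is not pure bookkeeping is the positivity $S_n\ge0$: near $S_n=1$ the nullcline $\tfrac{r}{\beta}(1-x)(1+ax)$ is small, so one must keep $I_0$ — and therefore every $I_n$, since the sequence decreases — below that threshold, which is precisely the ``delicate choice of $I_0$'' alluded to in the text and is exactly what membership in one of the $\Omega_i$ delivers.
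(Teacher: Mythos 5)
Your proof is correct and follows essentially the same route as the paper's: write $I_{n+1}=h_2(S_n)I_n$, use the monotonicity of $h_2$ from Proposition~\ref{p1-p} to trap $h_2(S_n)$ between $\gamma_1=h_2(0)$ and $\gamma_2=h_2(1)<1$, and iterate. The only difference is that you spend a paragraph justifying $0<S_n<1$, whereas the paper simply states its conclusion ``for all $0<S_n<1$'' and defers that positivity issue to the surrounding discussion; your extra care is reasonable but not part of the paper's argument.
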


\begin{proof}   We have  \(0<I_{n+1}=h_2(S_n)I_n<I_n<I_0\) for all \(0<S_n<1\) by Proposition~\ref{p1-p}.   This shows \(I_n\) is monotonically decreasing and bounded by \(0\) below and  \(I_0\) above.    By the same proposition, we obtain the estimates 
\((1-K)I_n\le I_n\le\gamma_2 I_n\) for all \(n\ge1\).  Repeating these inequalities yields the desired estimates.
\end{proof}

Note that \(\gamma_1=1-K<1\) and \(\gamma_2<1\) due to  \(\beta<K(1+a)\), \(I_n\to0\) as \(n\to\infty\) for all \(S_n\in(0,1)\). Thus if the number \eqref{R_0 cand2} is less than 1,  $I_n\to0$ as claimed. The number \eqref{R_0 cand2} has the virtue of being simpler, and it provides some nice insight as we essentially compare the force of infection $\beta$ to all the factors that prevents spread, namely $K$ the combined death and recovery rate, and $a$ the measures taken to prevent spread among the susceptible population.  We leave thus the further investigation of basic reproduction number to 
epidemiological study.

As the model was given, there is just one control parameter, $a$ which is said to be the inhibitory effect. This parameter acts only on the susceptible part of the population so it could be isolation, vaccination or something along those lines. In Fig. \ref{Bifa} we show a bifurcation diagram that show it is possible to go from a situation where $E_1$ is stable to one where $E_0$ is stable by increasing $a$.

As pointed out above there is  another way to decrease the spread of the disease, namely by increasing $K$, the sum of deaths due to disease and recovery. There are of course two ways to do this, and they are mathematically symmetrical, but  the nicer possibility is to cure the infected so as to inhibit further spread. In Fig. \ref{BifK} we see that it is indeed possible to go from stable $E_1$ to stable $E_0$ by increasing $K$. 
\begin{figure}[h!]
\centering
\includegraphics[width=.9\linewidth]{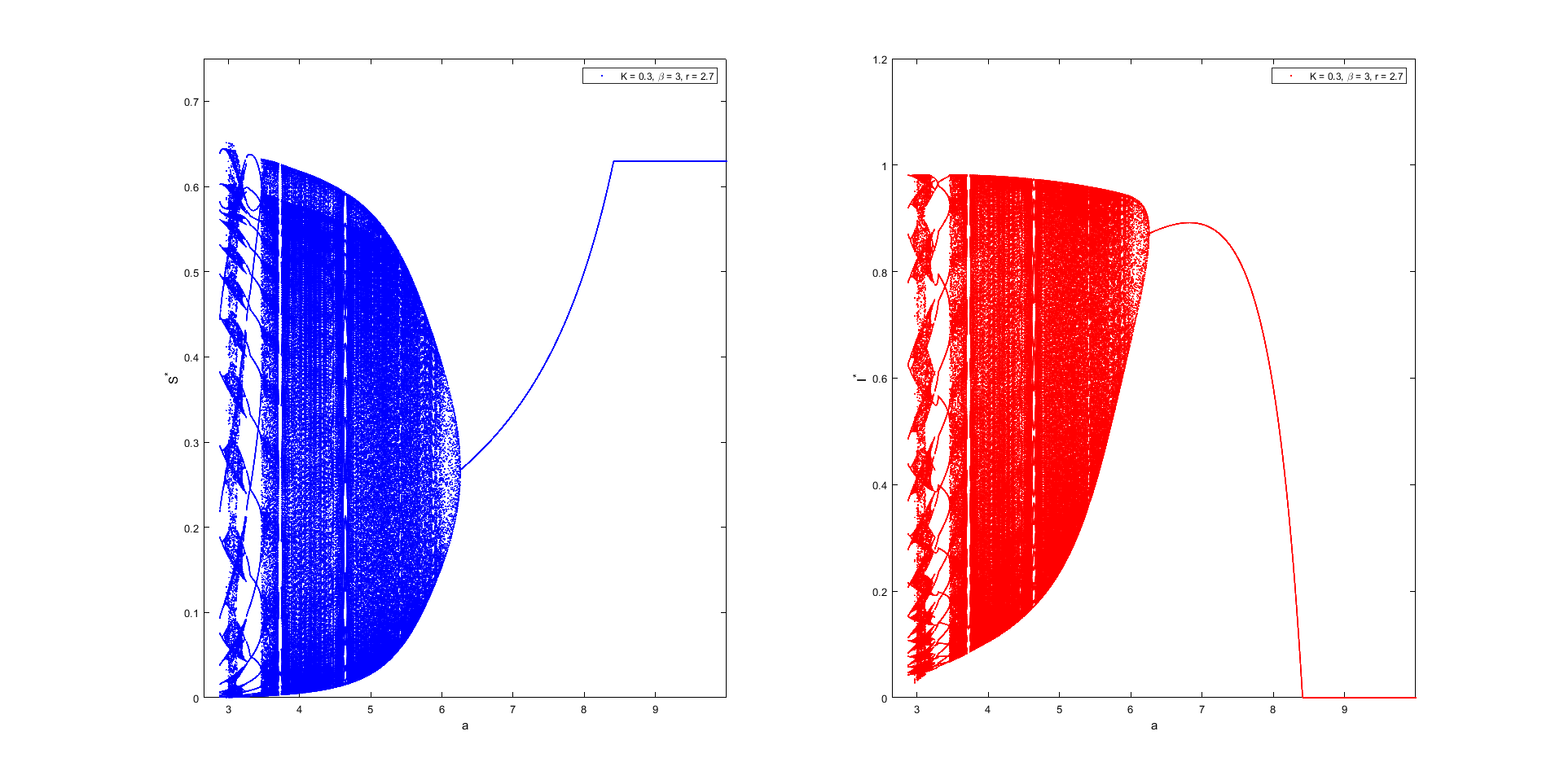}
\caption{Bifurcation diagram with parameter $a$.}
\label{Bifa}
\end{figure}
\begin{figure}[h!]
\centering
\includegraphics[width=.9\linewidth]{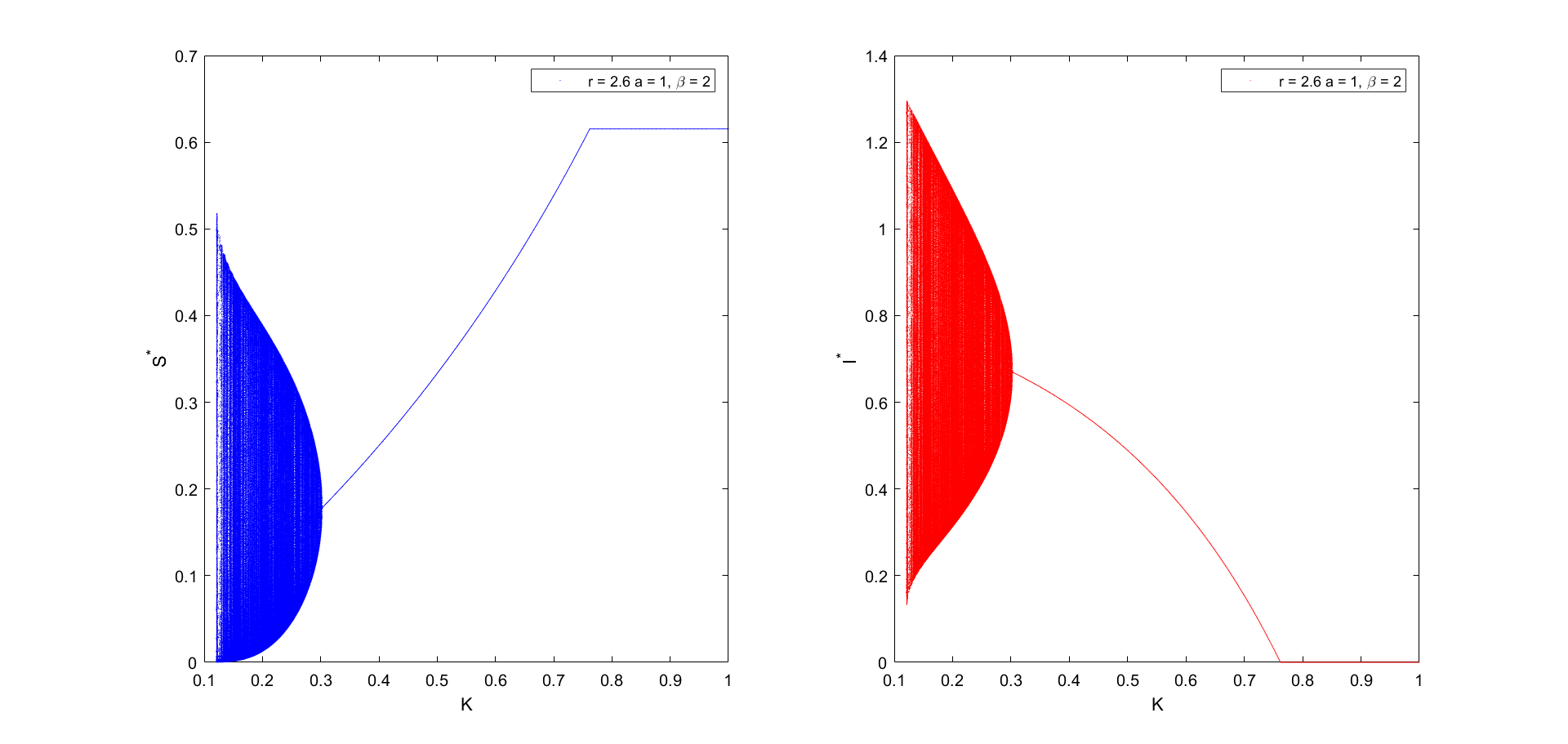}
\caption{Bifurcation diagram with parameter $K$.}
\label{BifK}
\end{figure}


\subsection{Further investigations}

We think that the following issues are worth further investigation.  From point of view of dynamical system theory and epidemiology it is desirable to have  
global convergence to a fixed point, in our case 
to have a description on a larger positive region of  \({\mathbb R}^2\) from which the iteration start will converge to the endemic fixed point \(E_1\). 

In section 2 we presented conditions for positive trajectories in proposition \ref{Omega positive}. As pointed out this does not cover all possible cases, so some further investigation is needed to give a complete picture of the positive trajectories. 

We demonstrated  chaotic dynamical behaviour for some parameters numerically.  Obviously it will be nice to have a proof on its existence.  
Similarly, a complete analysis on the bifurcation from Neimark-Sacker's limit cycle to \(p\)-periodic cycles/orbits on an invariant closed curve shown in Figure~\ref{10_LC} and Figure~\ref{LC_LC} needs a rigorous  mathematical analysis.

In order to make the current study and the related results have more  impacts in epidemiology, the model might be modified.  One potential approach is to use Ricker-type models:
\[
\begin{aligned}
S_{n+1}&=S_ne^{r(1-\frac{S_n}{K})-\frac{\beta I_n}{1+ aS_n}}\\
I_{n+1}&=S_ne^{r(1-\frac{S_n}{K})}(1-e^{-\frac{\beta I_n}{1-a S_n}})e^{-\gamma-\mu}\\
R_{n+1}&=S_ne^{r(1-\frac{S_n}{K})}(1-e^{-\frac{\beta I_n}{1-a S_n}})(1-e^{-\gamma})e^{-\mu}.
\end{aligned}
\]
The analysis presented in this paper can provide some insights into this new system.


\appendix

%

\section{Proof of Proposition \ref{Omega positive}}\label{proof}

We proof Proposition \ref{Omega positive} by proving a series lemmas.  Through out this section we assume \(0<K<1\).

\begin{lemma}\label{f<u^*}
If $\sqrt{K}+1\leq r \leq(\sqrt{K}+1)^2$, then $f(x,u)\leq u^*$
\end{lemma}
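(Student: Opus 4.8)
The plan is to avoid the full rational expression for $f$ and instead bound it crudely from above by its logistic part. First I would observe that for $x\ge 0$ and $u\ge 0$ (with $a\ge 0$, $\beta>0$) the force-of-infection term $\beta x u/(1+ax)$ is nonnegative, hence $f(x,u)\le r x(1-x)$. Since $x(1-x)=\tfrac14-(x-\tfrac12)^2\le\tfrac14$ for every real $x$ and $r>0$, this already yields the uniform bound $f(x,u)\le r/4$, so the inequality $f(x,u)\le u^*$ is reduced to the purely algebraic claim $r/4\le u^*$.

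Next I would verify $\tfrac r4\le u^*=\tfrac{(r-1+K)^2}{4Kr}$. Multiplying by $4Kr>0$, this is equivalent to $Kr^2\le (r-1+K)^2$; since $r>1$ and $0<K<1$ force $r-1+K>0$, one may take positive square roots to get the equivalent inequality $\sqrt K\, r\le r-1+K$, i.e. $(1-\sqrt K)\,r\ge 1-K=(1-\sqrt K)(1+\sqrt K)$. Dividing by $1-\sqrt K>0$ (this is where $K<1$ enters) gives exactly $r\ge 1+\sqrt K$, which is part of the hypothesis. Chaining the estimates gives $f(x,u)\le r/4\le u^*$, as desired.

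I do not expect any real obstacle. The only points requiring care are the sign when dividing by $1-\sqrt K$, and noting that the upper bound $r\le(1+\sqrt K)^2$ in the hypothesis is not actually used in this lemma — it is the condition ensuring $u^*\le 1$ (so that the triangle $\Omega_1$ is well defined), and is carried along merely to match the hypothesis of Proposition~\ref{Omega positive}. As an alternative to the square-root manipulation one could factor $Kr^2-(r-1+K)^2=-\bigl(r-(1-\sqrt K)^2\bigr)\bigl(r-(1+\sqrt K)^2\bigr)$ and read off the sign from the position of $r$ relative to the two roots, but the computation above is shorter.
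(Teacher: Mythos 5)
Your main argument is correct and is essentially the paper's own proof: both reduce $f(x,u)\le u^*$ to $f(x,u)\le r/4$ via the logistic bound and then show $r/4\le u^*$ is equivalent to $r\ge 1+\sqrt K$; the paper additionally checks $u^*\le 1$ from the upper bound $r\le(1+\sqrt K)^2$, which, as you rightly observe, is needed for $\Omega_1$ rather than for this inequality. One small correction to your closing remark: the quadratic $Kr^2-(r-1+K)^2$ factors as $-(1-K)\bigl(r-(1+\sqrt K)\bigr)\bigl(r-(1-\sqrt K)\bigr)$, so its roots are $1\pm\sqrt K$, not $(1\pm\sqrt K)^2$; the latter are the roots of the other quadratic $(r-1+K)^2-4Kr$ governing $u^*\le 1$.
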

\begin{proof}
By Lemma \ref{f<1} we have $f(x,y)<1$ so we require that $\frac{r}{4}\leq u^*\leq1$. Now $u^* = \frac{(r-1+K)^2}{4Kr}\leq1$ if and only if $(r-1+K)^2\leq 4Kr$ which is equivalent to $(r-1-K-2\sqrt{K})(r-1-K+2\sqrt{K})\leq0$. 
This is true if and only if $(\sqrt{K}-1)^2\leq r \leq (\sqrt{K}+1)^2\leq 3$. Since $0<K<1$ both the first and last inequalities are automatically true. Hence we have $r\leq (\sqrt{K}+1)^2$.

Moreover $\frac{r}{4}\leq u^*$ if and only if $Kr^2\leq(r-1+K)^2$ which is $\sqrt{K}r = \sqrt{K} + (r-1)\sqrt{K} \leq r-1+K$. Rearranging slightly yields $\sqrt{K}(1-\sqrt{K}) \leq (r-1)(1-\sqrt{K})$, i.e $\sqrt{K}+1\leq r$ since $0<K<1$.

Summing up, if $\sqrt{K}+1\leq r \leq(\sqrt{K}+1)^2$, then $f(x,u)\leq u^*$ as stated.
\end{proof}
\begin{lemma}\label{f>0}
If $(x,y)\in\Omega_1$ and either $\beta<r$ or $r<\beta<\frac{4Kr^2}{(r-1+K)^2}$, then $f(x,y)\geq0$.
\end{lemma}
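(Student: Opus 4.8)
The plan is to reduce the nonlinear inequality $f(x,y)\ge 0$ on $\Omega_1$ to an \emph{affine} inequality in the single variable $x$, which can then be verified at the two endpoints of an interval.

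First I would dispose of the case $x=0$, where $f(0,y)=0\ge 0$. For $x>0$, since $1+ax>0$, I would divide $f(x,y)=rx(1-x)-\frac{\beta xy}{1+ax}$ by $x$ and clear the denominator, so that $f(x,y)\ge 0$ becomes equivalent to $r(1-x)(1+ax)\ge\beta y$. Now $\Omega_1$ is only defined when $u^*\le 1$, so every $(x,y)\in\Omega_1$ satisfies $0<x\le u^*\le 1$ and $0\le y\le u^*-x$; hence $1-x\ge 0$ and $1+ax\ge 1$ (recall $a\ge 0$), which gives $r(1-x)(1+ax)\ge r(1-x)$, and since $\partial_y f=-\frac{\beta x}{1+ax}\le 0$ the least favorable admissible $y$ is $y=u^*-x$. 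Thus it suffices to prove
\[
g(x):=r(1-x)-\beta(u^*-x)\ \ge\ 0\qquad\text{for }0\le x\le u^*.
\]

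Next I would observe that $g$ is affine in $x$, so its minimum on $[0,u^*]$ is attained at an endpoint, and split on the sign of $r-\beta$. If $\beta<r$, then $g$ is decreasing, so $g(u^*)=r(1-u^*)\ge 0$ since $u^*\le 1$. If $r<\beta$, then $g$ is increasing, so $g(0)=r-\beta u^*$, which is $\ge 0$ exactly when $\beta\le r/u^*=\frac{4Kr^2}{(r-1+K)^2}$ (using $u^*=\frac{(r-1+K)^2}{4Kr}$) --- precisely the standing hypothesis. In either case $g\ge 0$ on $[0,u^*]$, and chaining the inequalities yields $r(1-x)(1+ax)\ge r(1-x)\ge\beta(u^*-x)\ge\beta y$, hence $f(x,y)\ge 0$ on $\Omega_1$.

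I do not anticipate a genuine obstacle here; the only point requiring care is the bookkeeping --- recognizing that the upper bound $\frac{4Kr^2}{(r-1+K)^2}$ is exactly $r/u^*$, that discarding the favorable factor $1+ax\ge 1$ loses nothing, and that the two hypotheses on $\beta$ correspond to the two endpoints $x=u^*$ and $x=0$ at which the affine function $g$ must be nonnegative.
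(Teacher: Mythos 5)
Your proof is correct and follows essentially the same route as the paper: both reduce $f(x,y)\ge 0$ to the inequality $r(1-x)(1+ax)\ge\beta y$ on the triangle and then verify it at the extreme points, ending up with the same two critical quantities $r(1-u^*)\ge 0$ and $r-\beta u^*\ge 0$ (the latter being exactly $\beta\le r/u^*=\frac{4Kr^2}{(r-1+K)^2}$). The only cosmetic difference is that the paper invokes concavity of $h(x,y)=r(1-x)(1+ax)-\beta y$ on the polytope to locate the minimum at a vertex, whereas you discard the harmless factor $1+ax\ge 1$ and check the endpoints of an affine function in $x$; the substance is identical.
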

\begin{proof}
Note that $f(x,y)\geq0$ if and only if $h(x,y) = r(1-x)(1+ax)-\beta y\geq0$ for positive $x$ so it is sufficient to determine under which conditions $\min_{\Omega_1}h(x,y)\geq0$. 
Now since $h$ is a concave function on the polytope $\Omega_1$ so the minimum is attained at some vertex \cite{polytope}. 
The vertices are $(0,0),(u^*,0)$ and $(0,u^*)$ and clearly $h(x,0)\geq h(x,y)$ for all $(x,y)\in \Omega_1$. Thus $\min_{\Omega}h(x,y) = h(0,u^*) = r-\beta u^*$. This is non-negative if and only if $u^*\leq\frac{r}{\beta}$.
We have two cases to consider. If $\frac{r}{\beta} >1$ we requre nothing more since $u^*\leq 1$. We can simply note that $r>\beta$. 
If however $\frac{r}{\beta} <1$ we require additionally that $r<\beta<\frac{r}{u^*}=\frac{4Kr^2}{( r-1+K)^2}$.
\end{proof}

\begin{lemma}\label{X2} 
 If  \(r>(1+\sqrt{K})^2\) and \(\beta<\frac{r}{2u^*-1}\), then there is only one positive intersection point between \(C_2^2\) and \(C_3^2\).  Furthermore, \(0\le f(x,y)\le 1\)   for \((x,y)\in \Omega_2\) if moreover \(r\le4\). 
\end{lemma}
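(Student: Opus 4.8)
The plan is to reduce both claims to elementary facts about the single auxiliary function
$$\phi(x):=(u^*-x)-\frac{r}{\beta}(1-x)(1+ax),$$
whose zeros are exactly the abscissae of the points where the line $y=u^*-x$ underlying $C_2^2$ meets the parabola $y=\frac r\beta(1-x)(1+ax)$ underlying $C_3^2$. Since $a>0$, $r>1$ and $\beta>0$, the function $\phi$ is a genuine quadratic with positive leading coefficient $ra/\beta$, hence an upward-opening parabola.

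First I would establish the sign data. The hypothesis $r>(1+\sqrt K)^2$ is the same as $u^*>1$, which gives $u^*<2u^*-1$, so $\beta<\frac{r}{2u^*-1}$ yields $\beta u^*<\beta(2u^*-1)<r$; in particular $\phi(0)=u^*-\frac r\beta<0$, and the constant term of $\phi$, equivalently the product of its two roots, equals $\frac{\beta u^*-r}{ra}<0$. Also $\phi(1)=u^*-1>0$. Hence $\phi$ has two distinct real roots of opposite sign, so there is exactly one positive intersection point, call it $\bar x$, with $0<\bar x<1$; this proves the first sentence. For the refinement $\bar x\in(\tfrac12,1)$ (which the construction of $\Omega_2$ presupposes) I would evaluate $\phi$ at $\tfrac12$: writing $t:=2u^*-1>1$ we have $u^*-\tfrac12=\tfrac t2$, while $\frac r\beta>t$ gives $\phi(\tfrac12)=u^*-\tfrac12-\frac{r(2+a)}{4\beta}<\frac t2-\frac{t(2+a)}{4}=-\frac{ta}{4}<0$. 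Since $\phi$ is an upward parabola with $\phi(\tfrac12)<0<\phi(1)$, its unique positive root lies in $(\tfrac12,1)$.

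For the bound on $f$ I would use the factorisation $f(x,y)=\frac{x}{1+ax}\bigl(r(1-x)(1+ax)-\beta y\bigr)$. The upper estimate $f(x,y)<1$ on $\Omega_2\subset\mathbb{R}_{\ge}^2$ is immediate from Lemma~\ref{f<1} as soon as $r\le4$. For the lower estimate, observe that $\Omega_2=\{(x,y):0\le x\le1,\ 0\le y\le\psi(x)\}$, where $\psi(x)=u^*-x$ on $[0,\bar x]$ and $\psi(x)=\frac r\beta(1-x)(1+ax)$ on $[\bar x,1]$; the four boundary arcs $C_1^2,\dots,C_4^2$ enclose exactly this region because $\psi\ge0$ on $[0,1]$ (indeed $\psi\ge u^*-\bar x>u^*-1>0$ on $[0,\bar x]$ and $\psi\ge0$ on $[\bar x,1]$). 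Since $\frac{x}{1+ax}\ge0$ and $y\mapsto f(x,y)$ is nonincreasing, it suffices to check $r(1-x)(1+ax)-\beta\psi(x)\ge0$, i.e. $\psi(x)\le\frac r\beta(1-x)(1+ax)$, for $x\in[0,1]$. On $[\bar x,1]$ this holds with equality; on $[0,\bar x]$ it is the inequality $\phi(x)\le0$, valid because on that interval $x$ lies between the negative root of $\phi$ and its larger root $\bar x$. Therefore $f(x,y)\ge f(x,\psi(x))\ge0$ throughout $\Omega_2$.

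I expect the proof to be essentially routine; the only points needing care are the correct identification of $\Omega_2$ as the subgraph of $\psi$ (together with the verification $\psi\ge0$, so that the four arcs genuinely bound that region) and keeping the chain $\beta u^*<\beta(2u^*-1)<r$ — valid precisely because $u^*>1$ — straight throughout.
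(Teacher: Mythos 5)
Your proof is correct, and on the positivity claim it takes a genuinely different (and cleaner) route than the paper. For the location of the intersection point, the paper works with the same quadratic but argues via the discriminant $\Delta>0$ together with a Routh-array sign count, and then asserts $\bar x>\tfrac12$ from the explicit root formula; your observation that the product of the roots of $\phi$ equals $\frac{\beta u^*-r}{ra}<0$ delivers real roots of opposite sign in one stroke, and your evaluation $\phi(\tfrac12)<-\tfrac{ta}{4}<0$ supplies a transparent proof of $\bar x\in(\tfrac12,1)$ that the paper only gestures at. For $f\ge0$ the paper appeals to concavity of $h(x,y)=r(1-x)(1+ax)-\beta y$ on the compact set $\Omega_2$ and evaluates its minimum on the boundary (ending in a rather opaque formula), whereas you exploit the factorisation $f(x,y)=\frac{x}{1+ax}\bigl(r(1-x)(1+ax)-\beta y\bigr)$ together with monotonicity of $f$ in $y$, so that nonnegativity reduces to the single inequality $\psi(x)\le\frac r\beta(1-x)(1+ax)$, which holds by the very definition of $\Omega_2$ as the subgraph of $\psi=\min\{u^*-x,\ \frac r\beta(1-x)(1+ax)\}$. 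This sidesteps any convexity considerations for $\Omega_2$, makes the role of the curve $C_3^2$ explicit, and is the argument I would recommend; your explicit verification that the four arcs really bound the subgraph of $\psi$ is a point the paper leaves implicit.
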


\begin{proof}
The intersection of the   \(C_2^2\) and \(C_3^2\) are solutions of the second degree polynomial equation
\[
x+\frac{r}{\beta}(1-x)(1+ax)=u^*.
\]
Under the condition \(\beta<\frac{r}{2u^*-1}\), 
\[
\Delta=\left(1-\frac{r}{\beta}-\frac{ar}{\beta}\right)^2-\frac{4ar}{\beta}\left(u^*-\frac{r}{\beta}\right)>0
\]
Hence there are two real roots.  By the Routh test of location of polynomial there is one positive root and one negative root counting the sign change in the first column of the Routh array.
Denote the positive root as \(\bar x\) which is by straightforward calculation
\[
\bar x=\frac{1-\frac{r}{\beta}+\frac{ar}{\beta}+\sqrt{\Delta}}{2\frac{r}{a\beta}}>\frac{1}{2}.
\]

The main issues remained is to show \(f(x,y)\ge0\) as we have shown in the proof of Lemma \ref{f<u^*} \(f(x,y)\le 1\) if \(r\le4\).  The proof is similar to the proof of lemma \ref{f>0}. The only difference is to determine the minimum of the function \(h(x,y)\) on the boundary \(\partial \Omega_2\).  Here we use the facts that  \(h(x,y)\) is a concave function and \(\Omega_2\) is compact, from which we evaluate the minimum on the boundary.  A straightforward computation yields 
\[
\min_{\Omega_2}f(x,y)= r-\beta u^* +\min\{0,\bar x(\beta-r+ra-ra\bar x)\}=r-\beta u^* \bar x(\beta-r+ra-ra\bar x),
\]
where the last equality holds due to \(\beta-r+ra-ra\bar x<0\) by substitution of the expression for \(\bar x\).  A further straightforward calculation gives \(r-\beta u^* \bar x(\beta-r+ra-ra\bar x)>0\), showing \(f(x,y)\ge0\).
\end{proof}

\begin{lemma}\label{X3}
Let \(a=1\).  Assume either that \(u^*>2\) and \(\beta<r\) or that \(\frac54<u^*\le2\) and \(\beta<{r}{v_+}\) where \(v_+=\left( \frac{\sqrt{u^*+1} +\sqrt{u^*-1}}{2}\right)^2>0\).  Then there are two intersection points \(x_1\) and \(x_2\) of the curves 
\(C_2^3\) and \(C_3^3\) satisfying
\(0<x_1<\frac12\) and \(\frac12<x_2<1\), respectively.  Moreover, \(f(x,y)\ge0\) if \((x,y)\in\Omega_3\).
\end{lemma}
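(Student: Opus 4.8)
The plan is to imitate the proof of Lemma~\ref{X2}: first reduce the intersection of $C_2^3$ and $C_3^3$ to a quadratic equation, locate its roots by a sign analysis, and then deduce $f\ge0$ on $\Omega_3$ from concavity of an auxiliary function together with a boundary check. Setting $a=1$, a point lies on both $C_2^3$ and $C_3^3$ exactly when $u^*-x=\frac r\beta(1-x)(1+x)$, i.e. when
$$P(x):=\tfrac r\beta x^2-x+\bigl(u^*-\tfrac r\beta\bigr)=0 .$$
Throughout this case $1<\tfrac r\beta<u^*$. Observe that $v_+$ is precisely the larger root of $4v^2-4u^*v+1=0$ (squaring the defining identity $2\sqrt{v_+}=\sqrt{u^*+1}+\sqrt{u^*-1}$ twice yields this), and $4\bigl(\tfrac r\beta\bigr)^2-4u^*\bigl(\tfrac r\beta\bigr)+1$ is the discriminant of $P$; so the hypothesis on $\beta$ is exactly what makes this discriminant positive and $P$ have two real roots.

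Next I would locate the roots by evaluating $P$ at $0,\tfrac12,1$. From $\tfrac r\beta<u^*$ one gets $P(0)=u^*-\tfrac r\beta>0$; from $u^*>1$ one gets $P(1)=u^*-1>0$; and the bound on $\beta$ is tuned precisely so that $P(\tfrac12)=u^*-\tfrac12-\tfrac{3r}{4\beta}<0$. The intermediate value theorem then produces a root $x_1\in(0,\tfrac12)$ and a root $x_2\in(\tfrac12,1)$, and a quadratic has no others, so these are the two intersection points. (Vieta's formulas, using $x_1x_2=\tfrac{u^*\beta}{r}-1>0$ and $x_1+x_2=\tfrac\beta r<1$, confirm $0<x_1<x_2<1$ directly once the discriminant is positive.)

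For the positivity claim, write $f(x,y)=\tfrac{x}{1+ax}\,h(x,y)$ with $h(x,y):=r(1-x)(1+ax)-\beta y$, which for $a=1$ reads $h(x,y)=r(1-x^2)-\beta y$; since $x\ge0$ on $\Omega_3$ it suffices to show $h\ge0$ there. The function $h$ is concave (concave in $x$, affine in $y$) and $\Omega_3$ is compact, so, exactly as in Lemma~\ref{X2}, it is enough to check $h\ge0$ on $\partial\Omega_3$. On the arcs of $C_3^3$ one has $h\equiv0$ by definition of the curve; on $C_4^3$, $h(x,0)=r(1-x^2)\ge0$ for $0\le x\le1$; on $C_1^3$, $h(0,y)=r-\beta y\ge r-\beta\cdot\tfrac r\beta=0$ since $y\le\tfrac r\beta$ there; and on $C_2^3$, $h(x,u^*-x)=-\beta P(x)\ge0$ for $x_1\le x\le x_2$ because $P$ opens upward and vanishes at $x_1$ and $x_2$. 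Hence $h\ge0$ on $\partial\Omega_3$, so $h\ge0$ on $\Omega_3$, and therefore $f\ge0$ on $\Omega_3$.

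The main obstacle is the algebra in the second step: one must verify that the stated bound on $\beta$ — through $v_+$ in the regime $\tfrac54<u^*\le2$, and through the cruder bound $\beta<r$ when $u^*>2$ — genuinely forces $P(\tfrac12)<0$ while keeping the discriminant positive, which amounts to chasing the surd $\sqrt{(u^*)^2-1}$ hidden inside $v_+$ and comparing it against $\tfrac{r}{\beta}$ and $u^*$. The remaining two sign conditions $P(0)>0$ and $P(1)>0$ are immediate from $1<\tfrac r\beta<u^*$ and $u^*>1$, and once the roots are placed the positivity statement follows almost for free, just as in the $\Omega_2$ case.
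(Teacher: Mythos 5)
Your overall strategy is the same as the paper's: reduce the intersection of $C_2^3$ and $C_3^3$ to the quadratic $P(x)=\tfrac r\beta x^2-x+(u^*-\tfrac r\beta)$, control its discriminant through $v_\pm$, place the roots, and then obtain $f\ge0$ from concavity of $h$ together with a boundary check. Your treatment of the positivity half is in fact cleaner and more complete than the paper's (which only asserts that ``a straightforward computation yields'' $\min_{\Omega_3}h\ge0$): the factorization $f=\tfrac{x}{1+ax}h$, the identity $h(x,u^*-x)=-\beta P(x)$ on $C_2^3$, and the checks on the other three boundary arcs are all correct, and the Vieta computation does place both roots in $(0,1)$.

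The genuine gap is precisely the step you set aside as ``the main obstacle'': showing that the hypothesis on $\beta$ forces $P(\tfrac12)<0$. This is not routine algebra to be chased later; it does not go through. Writing $v=r/\beta$, one has $P(\tfrac12)=u^*-\tfrac12-\tfrac{3v}{4}$, so $P(\tfrac12)<0$ is equivalent to $v>\tfrac{4u^*-2}{3}$, whereas positivity of the discriminant only requires $v>v_+=\tfrac{u^*+\sqrt{(u^*)^2-1}}{2}$. Since $3\sqrt{(u^*)^2-1}\le 5u^*-4$ (equivalent to $(4u^*-5)^2\ge0$), one has $v_+\le\tfrac{4u^*-2}{3}$ for all $u^*\ge\tfrac54$, with equality only at $u^*=\tfrac54$. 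Hence for $u^*>\tfrac54$ there is a nonempty range $v_+<v<\tfrac{4u^*-2}{3}$ in which $P$ has two real roots but $P(\tfrac12)>0$; as the vertex of $P$ sits at $x=\tfrac1{2v}<\tfrac12$, both roots then lie in $(0,\tfrac12)$ and the claimed straddling $x_1<\tfrac12<x_2$ fails. (The hypothesis as literally stated, $\beta<rv_+$, is weaker still: since $v_+>1$ for $u^*>\tfrac54$ it is implied by $v>1$ and does not even secure $\Delta>0$; the condition consistent with the discriminant analysis is $\beta<r/v_+$.) The paper's own proof obscures this point by writing the roots as $\tfrac{1\mp\sqrt\Delta}{2}$, omitting the leading coefficient $v$ from the denominator; with the correct roots $\tfrac{1\mp\sqrt\Delta}{2v}$ the bound $\Delta<1$ no longer yields $x_1<\tfrac12<x_2$. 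Your instinct to test $P$ at $\tfrac12$ is exactly right --- it isolates the condition $\beta<\tfrac{3r}{4u^*-2}$ that the hypotheses would actually need to supply --- but as written the proposal leaves its central claim unproved, and that claim is not a consequence of the stated assumptions.
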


\begin{proof}
When \(a=1\) we have the simpler second degree  polynomial equation
\(\frac{r}{\beta} x^2-x+u^*-\frac{r}{\beta}=0\).  Let \(v=\frac{r}{\beta}\).  Now
\(\Delta=4v^2-4u^*v+1=(2v-u^*)^2-((u^*)^2-1)>0\)  if either 
\(v>v_+\) or \(v<v_-\), where \(v_\pm=\frac{u^*\pm\sqrt{(u^*)^2-1}}{2}
=\left( \frac{\sqrt{u^*+1} \pm\sqrt{u^*-1}}{2}\right)^2>0\).  This is equivalent to 
\(\beta<\frac{r}{v_+}\) or \(\beta>\frac{r}{v_-}\). 

Note that \(\Delta<1\) since \(u^*-v>0\).  So the two roots of the polynomial equation
\[
x_1=\frac{1-\sqrt{\Delta}}{2}, x_2=\frac{1+\sqrt{\Delta}}{2}.
\]
satisfy \(0<x_1<\frac12\) and \(\frac12< x_2<1\) if \(\beta<\frac{r}{v_+}\) or \(\beta>\frac{r}{v_-}\). 
Next we estimate the bounds of \(v_-\) and \(v_+\) to get conditions on \(\beta\).  By a basic calculus argument we can find that \(v_+\ge\frac12\) and \(v_-\le\frac12\) for  for \(u^*<1\).  But \(v\) is supposed to be great than \(1\).  Hence \(v_+\ge\frac12\) yields the positive \(\Delta\).  Thus \(u^*>v>\max\{1,v_+\), or equivalently
\[
\frac{r}{u^*}<\beta<r \min\{1,v_+\}=
\begin{cases} 
rv_+ , & \text{ if } \frac54\le u^*<2\\ 
r, &\text{ if } u^*\ge 2
\end{cases}
\]
proving the first part of the lemma.  

The rest of the proof is similar to the proof of the previous lemma.  The evaluation of the minimum of \(h(x,y)\) on \(\Omega_3\) yields
\(\min_{\Omega_3}h(x,y)\ge0\).  Hence \(f(x,y)\ge0\).  
\end{proof}

Note that if we assume that \(r\le4\) then \(f(x,y)\le1\).  

 \begin{proof}[Proof of Proposition \ref{Omega positive}]
(i) Suppose $(S_k,I_k)\in\Omega_1$ and the specified conditions hold. Then by Lemmas \ref{f<u^*} and \ref{f>0} we have $S_{k+1}=f(S_k,I_k)\geq0$. Moreover since $S_k+I_k\leq u^*$, Lemma \ref{u^*} implies that $S_{k+1}+I_{k+1}\leq u^*$.  Finally observe that $g(x,y)=(1-K)y + \frac{\beta xy}{1+ax}\geq0$ for all $x,y>0$.
Thus $(S_{k+1},I_{k+1})\in\Omega_1$ and the result follows by induction.
 
 (ii) By Lemma \ref{X2}, \(f(S_n,I_n)\le 1\) for all \(1\le r\le 4\).   
By Lemma \ref{X2}, \(0\le S_{n+1}\le 1\) if \((1+\sqrt{K})^2<r\le 4\) and \(\beta<\frac{r}{2u^*-1}\). It remains to show that  \(I_{n+1}\le \frac{r}{\beta}(1-S_{n+1})(1+aS_{n+1})\) if \((S_n,I_n)\in \Omega_2\).
If \(S_{n+1},I_{n+1}\) lies in the region to the left of the line \(x=\bar x\) then by Lemma \ref{u^*} \(S_{n+1}+I_{n+1}\le u^*\) implying that \((S_{n+1},I_{n+1})\in\Omega_2\).  Now we assume by contradiction \(I_{n+1}>\frac{r}{\beta}(1-S_{n+1})(1+aS_{n+1})\) when \(\bar x\le S_{n+1}\le 1\).  Hence
\[
 \frac{r}{\beta}(1-S_{n+1})(1+aS_{n+1}< I_{n+1}\le u^*-S_{n+1}\le u^*-\bar x.
\]
Since the function in the left hand side is concave and decreasing, we get
\[
 \frac{r}{\beta}(1-\bar x)(1+a\bar x)<I_{n+1}\le u^*-S_{n+1}\le u^*-\bar x,
\]
contracting to the fact that \(\bar x\) is the coordinator of the intersection point between the line \(x+y=u^*\) and the curve \(y=\frac{r}{\beta}(1- x)(1+ax)\), proving the statement.

(iii) This follows by Lemma \ref{X3} and the similar argument as for (ii).
\end{proof}

\section{Computing the first Lyapunov coefficient \(c\) }\label{compute c}

\subsection{Flip from $E_0$}
To show that the flip bifurcation from $E_0$, happening when $r=3$ and $\beta<\beta_0$, is stable we had to determine the nondegeneracy coefficient 
$$
c = \frac{1}{6}\langle p , C(q,q,q) \rangle - \frac{1}{2}\langle p , B(q,(A-I_2)^{-1}B(q,q)) \rangle,
$$
where $B(x,y), C(x,y,z)$ are given by
\begin{equation}\label{app.Bi}
B_i(x,y) = \sum_{j,k=1}^n \evalat{\diffp{F_i(\xi)}{{\xi_j}{\xi_k}}}{\xi=0}x_jy_k,
\end{equation}
and
\begin{equation}\label{app.Ci}
C_i(x,y) = \sum_{j,k,l=1}^n \evalat{\diffp{F_i(\xi)}{{\xi_j}{\xi_k}{\xi_l}}}{\xi=0}x_jy_kz_l,
\end{equation}
where $i = 1,2$, and $A$ is the Jacobian matrix evaluated at $E_0$.

We have 
\begin{align*}
f(S,I) &= rS(1-S) - \frac{\beta SI}{1+aS}\\
g(S,I) &= (1-K)I + \frac{\beta SI}{1+aS}.
\end{align*}

To shift the fixed point to the origin, define
\begin{align*}
\xi_1 &= S-S_0\\
\xi_2 &=I-I_0 = I
\end{align*}
and note that $\xi_1=\xi_2=0$ if and only if $S = S_0$ and $I = 0$. 

In these new coordinates the system becomes
\begin{equation}\label{syst origin}
\begin{aligned}
\xi_1(n+1) &= f(\xi_1(n) + S_0, \xi_2(n))-S_0\\
\xi_2(n+1) &= g(\xi_1(n) + S_0, \xi_2(n).
\end{aligned}
\end{equation}

We write the system \eqref{syst origin} as
\begin{equation}\label{syst origin linearized}
\begin{pmatrix}
\xi_1(n+1)\\
\xi_2(n+1)
\end{pmatrix}
=
J(E_0)
\begin{pmatrix}
\xi_1(n)\\
\xi_2(n)
\end{pmatrix}
+ 
F(\xi_1(n),\xi_2(n))
\end{equation}
where as usual $J(E_0)$ is the Jacobian matrix evaluated at $E_0$. Then by definition
\begin{equation}\label{F def}
F = 
\begin{pmatrix}
F_1\\
F_2
\end{pmatrix}
=
\begin{pmatrix}
f(\xi_1 + S_0,\xi_2)-S_0\\
g(\xi_1+S_0,\xi_2)
\end{pmatrix}
- J(E_0)
\begin{pmatrix}
\xi_1\\
\xi_2
\end{pmatrix},
\end{equation}
and its Taylor expansion near the origin is given by
$$
F(x) = \frac{1}{2}B(x,x) + \frac{1}{6}C(x,x,x) + O(||x||^4),
$$
with $B(x,x),C(x,x,x)$ given by \eqref{app.Bi} and \eqref{app.Ci}. Our system is two-dimensional, so we have 
$$
B(x,y) =
\begin{pmatrix}
B_1(x,y)\\
B_2(x,y)
\end{pmatrix}.
$$
Form \eqref{F def} we find that
\begin{equation*}
F_1(\xi_1,\xi_2) = r(\xi_1+\frac{r-1}{r})(1-\xi_1+\frac{r-1}{r}) - \frac{\beta(\xi_1+\frac{r-1}{r})\xi_2}{1+a(\xi_1+\frac{r-1}{r})} -\frac{r-1}{r} - (2-r)\xi_1 - \frac{\beta(r-1)\xi_2}{r+a(r-1)},
\end{equation*}
and
\begin{equation*}
F_2(\xi_1,\xi_1) = (1-K)\xi_2+ \frac{\beta(\xi_1+\frac{r-1}{r})\xi_2}{1+a(\xi_1+\frac{r-1}{r})} - (1-K)\xi_2 - \frac{\beta(r-1)\xi_2}{r+a(r-1)}
= \frac{\beta(\xi_1+\frac{r-1}{r})\xi_2}{1+a(\xi_1+\frac{r-1}{r})} - \frac{\beta(r-1)\xi_2}{r+a(r-1)}.
\end{equation*}

Now we can compute partial derivatives. As these computations are completely straight forward but somewhat tedious, we just state that 
$$
\begin{matrix}
\evalat{\diffp[2]{F_1}{\xi_1}}{\xi=0} = -2r, &
\evalat{\diffp{F_1}{{\xi_1}{\xi_2}}}{\xi=0} = -\frac{\beta}{(1+\frac{a(r-1)}{r})^2}, &
\evalat{\diffp[2]{F_1}{\xi_2}}{\xi=0} = 0, \\
\evalat{\diffp[2]{F_2}{\xi_1}}{\xi=0} = 0, &
\evalat{\diffp{F_2}{{\xi_1}{\xi_2}}}{\xi=0} = \frac{\beta}{(1+\frac{a(r-1)}{r})^2}, &
\evalat{\diffp[2]{F_2}{\xi_2}}{\xi=0} = 0.
\end{matrix}
$$
Hence by \eqref{app.Bi} we get
$$
B(x,y) = 
\begin{pmatrix}
-2rx_1y_1 -\frac{\beta}{(1+\frac{a(r-1)}{r})^2}x_1y_2 -\frac{\beta}{(1+\frac{a(r-1)}{r})^2}x_2y_1\\
\frac{\beta}{(1+\frac{a(r-1)}{r})^2}x_1y_2 +\frac{\beta}{(1+\frac{a(r-1)}{r})^2}x_2y_1
\end{pmatrix}.
$$
Since $q = (1~0)^T$ we find that 
$$
B(q,q) = B(1,0,1,0) = 
\begin{pmatrix}
-2r\\
0
\end{pmatrix},
$$
which tells us that 
$$
\evalat{B(q,q)}{r=3} = -6\begin{pmatrix}1\\0\end{pmatrix}.
$$
Finally, the matrix
$$
(A-I_2)^{-1}
=
\left(
\begin{array}{cc}
-2 & -\frac{2 \beta }{2 a+3} \\
0 & \frac{2 \beta }{2 a+3}-K \\
\end{array}
\right)^{-1} =
\left(
\begin{array}{cc}
-\frac{1}{2} & \frac{\beta }{(2 a+3) K-2 \beta } \\
0 & -\frac{2}{2 K-\frac{4 \beta }{2 a+3}} \\
\end{array}
\right)
$$ 
so that
$$
(A-I_2)^{-1}B(q,q) = -6\bigg(-\frac{1}{2}\bigg)\begin{pmatrix}1\\0\end{pmatrix} = 3\begin{pmatrix}1\\0\end{pmatrix},
$$
which implies that 
$$
B(q,(A-I_2)^{-1} B(q,q)) = B(1,0,3,0) = \begin{pmatrix}-6\cdot3\\0\end{pmatrix} = -18\begin{pmatrix}1\\0\end{pmatrix}.
$$
Now we can compute
\begin{equation}\label{the B part}
-\frac{1}{2}\langle p , B(q,(A-I_2)^{-1} B(q,q)) \rangle = -\frac{1}{2} \left\langle \begin{pmatrix} 1\\p_2\end{pmatrix} , -18\begin{pmatrix} 1\\0\end{pmatrix} \right\rangle= 9.
\end{equation}
We are now well on the way. All that remains is to find $C(x,y,z)$ given by \eqref{app.Ci}. Again, the computations are tedious but not very difficult. We just give the results:
$$
\begin{array}{l l}
\evalat{\diffp[3]{F_1}{\xi_1}}{\xi=0} = 0, & 
\evalat{\diffp[3]{F_1}{\xi_2}}{\xi=0} =0 , \\
\evalat{\diffp{F_1}{{\xi_1^2}{\xi_2}}}{\xi=0} = \frac{2a\beta}{(1+\frac{a(r-1)}{r})^3}, &
\evalat{\diffp{F_1}{{\xi_1}{\xi_2^2}}}{\xi=0} = 0,\\
\evalat{\diffp[3]{F_2}{\xi_1}}{\xi=0} = 0, & 
\evalat{\diffp[3]{F_2}{\xi_2}}{\xi=0} =0 , \\
\evalat{\diffp{F_2}{{\xi_1^2}{\xi_2}}}{\xi=0} = -\frac{2a\beta}{(1+\frac{a(r-1)}{r})^3}, &
\evalat{\diffp{F_2}{{\xi_1}{\xi_2^2}}}{\xi=0} = 0.
\end{array}
$$
Using this and \eqref{app.Ci} we get
$$
C(x,y,z) = 
\begin{pmatrix}
\frac{2a\beta}{(1+\frac{a(r-1)}{r})^3}x_1y_1z_2 + \frac{2a\beta}{(1+\frac{a(r-1)}{r})^3}x_1y_2z_1 + \frac{2a\beta}{(1+\frac{a(r-1)}{r})^3}x_2y_1z_1\\
-\frac{2a\beta}{(1+\frac{a(r-1)}{r})^3}x_1y_1z_2 - \frac{2a\beta}{(1+\frac{a(r-1)}{r})^3}x_1y_2z_1 - \frac{2a\beta}{(1+\frac{a(r-1)}{r})^3}x_2y_1z_1
\end{pmatrix},
$$
and we see that
$$
C(q,q,q) = C(1,0,1,0,1,0) = \begin{pmatrix}0\\0\end{pmatrix}
$$
which entails
\begin{equation}\label{the C part}
\frac{1}{6}\langle p , C(q,q,q) \rangle = 0.
\end{equation}

Now, using \eqref{the B part} and \eqref{the C part} we finally get
$$
c = \frac{1}{6}\langle p , C(q,q,q) \rangle - \frac{1}{2}\langle p , B(q,(A-I_2)^{-1}B(q,q)) \rangle = 0 + 9 = 9.
$$

\subsection{Flip from $E_1$}

Again, our aim is to compute 
$$
c = \frac{1}{6}\langle p , C(q,q,q) \rangle - \frac{1}{2}\langle p , B(q,(A-I_2)^{-1}B(q,q)) \rangle.
$$
Again, we shift the fixed point to the origin by defining
\begin{align*}
\xi_1 &= S-S_1 = S - \frac{K}{\beta-aK}\\
\xi_2 &=I-I_1= I -\frac{r-1}{\beta-aK} -\frac{rK}{(\beta-aK)^2}.
\end{align*}
Then $\xi_1=\xi_2=0$ if and only if $S = S_1$ and $I = I_1$. Again, we write
\begin{equation}\label{syst origin linearized E_1}
\begin{pmatrix}
\xi_1(n+1)\\
\xi_2(n+1)
\end{pmatrix}
=
J(E_1)
\begin{pmatrix}
\xi_1(n)\\
\xi_2(n)
\end{pmatrix}
+ 
F(\xi_1(n),\xi_2(n)),
\end{equation}
so that, again
\begin{equation}\label{F def E_1}
F = 
\begin{pmatrix}
F_1\\
F_2
\end{pmatrix}
=
\begin{pmatrix}
f(\xi_1 + S_1,\xi_2+I_1)-S_1\\
g(\xi_1+S_1,\xi_2+I_1)-I_1
\end{pmatrix}
- J(E_1)
\begin{pmatrix}
\xi_1\\
\xi_2
\end{pmatrix},
\end{equation}
and its Taylor expansion near the origin is given by
$$
F(x) = \frac{1}{2}B(x,x) + \frac{1}{6}C(x,x,x) + O(||x||^4),
$$
with $B(x,x),C(x,x,x)$ given by \eqref{app.Bi} and \eqref{app.Ci}. Our system is two-dimensional, so we have 
$$
B(x,y) =
\begin{pmatrix}
B_1(x,y)\\
B_2(x,y)
\end{pmatrix}.
$$
We see that
$$
F_1(\xi_1,\xi_2) = r(\xi_1+S_1)(1-\xi_1-S_1) - \frac{\beta(\xi_2+I_1)(\xi_1+S_1)}{1+a(\xi_1+S_1)} - S_1 - a_{11}\xi_1 + K\xi_2
$$
and
$$ 
F_2(\xi_1,\xi_2) = (1-K)(\xi_2 + I_1) + \frac{\beta(\xi_2+I_1)(\xi_1+S_1)}{1+a(\xi_1+S_1)} - I_1 -a_{21}\xi_1 - \xi_2.
$$
Again, the computation of partial derivatives is not particularly interesting, so we just state that
$$
\begin{matrix}
\evalat{\diffp[2]{F_1}{\xi_1}}{\xi=0} = \frac{2 a (a K-\beta ) (a K (r-1)+\beta +K r-\beta r)}{\beta ^2}-2 r, \\
\evalat{\diffp{F_1}{{\xi_1}{\xi_2}}}{\xi=0} =-\frac{(\beta -a K)^2}{\beta },~
\evalat{\diffp[2]{F_1}{\xi_2}}{\xi=0} = 0, \\
\evalat{\diffp[2]{F_2}{\xi_1}}{\xi=0} = -\frac{2 a (a K-\beta ) (a K (r-1)+\beta +K r-\beta r)}{\beta ^2}, \\
\evalat{\diffp{F_2}{{\xi_1}{\xi_2}}}{\xi=0} =\frac{(\beta -a K)^2}{\beta }, ~
\evalat{\diffp[2]{F_2}{\xi_2}}{\xi=0} = 0,
\end{matrix}
$$
which means that
$$
B(x,y) = 
\begin{pmatrix}
x_1 y_1 \left(\frac{2 a (a K-\beta ) (a K (r-1)+\beta +K r-\beta r)}{\beta ^2}-2 r\right)-\frac{x_2 y_1 (\beta -a K)^2}{\beta }-\frac{x_1 y_2 (\beta -a K)^2}{\beta }\\
\frac{x_1 (a K-\beta ) \left(\beta y_2 (a K-\beta )-2 a y_1 (K (a (r-1)+r)+\beta -\beta r)\right)+\beta x_2 y_1 (\beta -a K)^2}{\beta ^2}
\end{pmatrix}.
$$
Next, we compute
$$
\begin{array}{l l}
\evalat{\diffp[3]{F_1}{\xi_1}}{\xi=0} =\tfrac{6 a^2 (\beta -a K)^2 (a K (r-1)+\beta +K r-\beta r)}{\beta ^3} , & 
\evalat{\diffp[3]{F_1}{\xi_2}}{\xi=0} =-\tfrac{2 a (a K-\beta )^3}{\beta ^2}, \\
\evalat{\diffp{F_1}{{\xi_1^2}{\xi_2}}}{\xi=0} = 0, &
\evalat{\diffp{F_1}{{\xi_1}{\xi_2^2}}}{\xi=0} = 0,\\
\evalat{\diffp[3]{F_2}{\xi_1}}{\xi=0} = -\tfrac{6 a^2 (\beta -a K)^2 (a K (r-1)+\beta +K r-\beta r)}{\beta ^3}, & 
\evalat{\diffp[3]{F_2}{\xi_2}}{\xi=0} = \tfrac{2 a (a K-\beta )^3}{\beta ^2}, \\
\evalat{\diffp{F_2}{{\xi_1^2}{\xi_2}}}{\xi=0} =0, &
\evalat{\diffp{F_2}{{\xi_1}{\xi_2^2}}}{\xi=0} = 0,
\end{array}
$$
which allows us to determine
$$
C(x,y,z) = 
\begin{pmatrix}
C_1(x,y,z)\\
C_2(x,y,z)
\end{pmatrix},
$$
where 
\begin{align*}
C_1(x,y,z) =& \tfrac{6 a^2 x_1 y_1 z_1 (\beta -a K)^2 (a K (r-1)+\beta +K r-\beta r)}{\beta ^3}-\tfrac{2 a x_2 y_1 z_1 (a K-\beta )^3}{\beta ^2}\\
&-\tfrac{2 a x_1 y_2 z_1 (a K-\beta )^3}{\beta ^2}-\tfrac{2 a x_1 y_1 z_2 (a K-\beta )^3}{\beta ^2}
\end{align*}
and
\begin{align*}
C_2(x,y,z) =& -\tfrac{6 a^2 x_1 y_1 z_1 (\beta -a K)^2 (a K (r-1)+\beta +K r-\beta r)}{\beta ^3}+\tfrac{2 a x_2 y_1 z_1 (a K-\beta )^3}{\beta ^2}\\
&+\tfrac{2 a x_1 y_2 z_1 (a K-\beta )^3}{\beta ^2}+\tfrac{2 a x_1 y_1 z_2 (a K-\beta )^3}{\beta ^2}.
\end{align*}
This then would in principle allow us to compute 
$$
c = \frac{1}{6}\langle p , C(q,q,q) \rangle - \frac{1}{2}\langle p , B(q,(A-I_2)^{-1}B(q,q)) \rangle,
$$
where we would have to replace $\beta$ by $\beta_1$ everywhere. Unfortunately, even using Mathematica this is a very complicated expression. Numerical computations show that $c$ can be both positive and negative, which means by continuity and the intermediate value theorem that it can also be zero. 

\section{Computing the first Lyapunov coefficient \(d\)}\label{computing d}

We give briefly the steps one goes through to compute the nondegeneracy coefficient $d$. In appendix \ref{compute c} we have computed the multilinear functions $B(x,y)$ and $C(x,y,z)$ for $E_1$. They remain the same here. First, we note that the characteristic polynomial is
$$
P(z) = z^2-(a_{11}+1)z + a_{11}+Ka_{21},
$$
which yields the eigenvalues (that we know are complex)
$$
\mu_{1,2} = \frac{a_{11}+1\pm i\sqrt{4(a_{11}+Ka_{21})-(a_{11}+1)^2}}{2},
$$
and we discussed before that $\mu_{1,2} = e^{\pm i\theta_0} = \sigma \pm \omega$ where $2\sigma = a_{11} + 1$. It follows from Euler's formula that 
$\sigma = \cos{\theta_0}$, and hence $\theta_0 = \arccos(\frac{a_{11}+1}{2})$.

Now, we wish to determine a generalized eigenvector $q$ of $A$. Such a vector satisfies
$$
Aq=e^{i\theta_0}q,~A\bar{q}=e^{-i\theta_0}\bar{q}.
$$
We get $q$ by solving 
$$
\begin{pmatrix}
a_{11}-e^{i\theta_0} & -K \\
a_{21} & 1-e^{i\theta_0}
\end{pmatrix}
\begin{pmatrix}
q_1\\
q_2
\end{pmatrix}
= 
\begin{pmatrix}
0\\
0
\end{pmatrix}.
$$
We may choose $q_1 = 1$ which yields $q_2 = \frac{a_{11}-e^{i\theta_0}}{K}$. Hence 
$$
q =
\begin{pmatrix}
1\\
\frac{a_{11}-e^{i\theta_0}}{K}
\end{pmatrix}.
$$
Next, we seek a generalized adjoint eigenvector $p$, which we normalize as before. Then $p$ must satisfy
$$
A^Tp=e^{i\theta_0}p,~A^T\bar{p}=e^{-i\theta_0}\bar{p},~\langle p , q \rangle = 1,
$$
which gives us three equations to solve:
$$
\begin{cases}
p_1+\frac{a_{11}-e^{i\theta_0}}{K}p_2 = 1\\
p_1(a_{11}-e^{i\theta_0})+a_{21}p_2=0\\
-Kp_1 + (1-e^{i\theta_0})p_2=0.
\end{cases}
$$
This yields
$$
p =\frac{1}{a_{11} -2e^{i\theta_0}+1} 
\begin{pmatrix}
1-e^{i\theta_0}\\
K
\end{pmatrix}.
$$
Now, using Mathematica, replacing $\beta$ everywhere by $\beta_2$, we can compute
\begin{multline*}
d=\frac{1}{2}Re\bigg(e^{-i\theta_0}\bigg[\langle p , C(q,q,\bar{q}) \rangle + 2\langle p , B(q,(A-I_n)^{-1}B(q,\bar{q})\rangle\\
+ \langle p , B(\bar{q} , (e^{2i\theta_0}I_n-A)^{-1}B(q,q))\rangle\bigg]\bigg).
\end{multline*}
Unfortunately, this is a massively complicated expression, so we have to resort to numerical experimentation. This strongly suggests that $d<0$ for all choices of $a$ and $K$ when $1<r<r_{max}$. Further, as $r$ approaches 1 from above, it seems very clear that $d\to-\infty$. If one plots $d$ as a function of $r$, it reaches a local maximum for $r$ between 1 and 3. Usually this maximum is attained quite close to $r = 1$. All this strongly suggests that $d<0$ for $1<r<r_{max}$. A graph is shown in figure \ref{d_LimitCycle}.

\end{document}